\documentclass[11pt]{article}

\usepackage[utf8]{inputenc}

\usepackage{
amsmath,amssymb,amsfonts,amsthm} 
\usepackage{bm}
\usepackage{colonequals}
\usepackage{comment}
\usepackage{epsfig} \usepackage{latexsym,nicefrac,bbm}
\usepackage
{xspace}
\usepackage{color,fancybox,graphicx,subfigure,fullpage}
\usepackage[top=1.25in, bottom=1.25in, left=1in, right=1in]{geometry}
\usepackage{tabularx} 
\usepackage{hyperref} 
\usepackage{cleveref}
\usepackage{pdfsync}
\usepackage[boxruled,linesnumbered]{algorithm2e}
\usepackage{siunitx}

\usepackage{multicol}
\usepackage{enumitem}
\usepackage{float}
\usepackage{tikz}
\usepackage{tabularray}
\usepackage{framed}

\usepackage{authblk}

\makeatletter
\newcommand{\vast}{\bBigg@{4}}
\newcommand{\Vast}{\bBigg@{5}}
\makeatother

\newtheorem{theorem}{Theorem}[section]
\newtheorem{fact}{Fact}[section]
\newtheorem{conjecture}[theorem]{Conjecture}
\newtheorem{definition}[theorem]{Definition}
\newtheorem{lemma}[theorem]{Lemma}
\newtheorem{remark}[theorem]{Remark}
\newtheorem{proposition}[theorem]{Proposition}
\newtheorem{corollary}[theorem]{Corollary}

\newcommand{\EE}{\mathbb{E}}

\newcommand{\NN}{\mathbb{N}}

\newcommand{\RR}{\mathbb{R}}

\DeclareSymbolFont{bbold}{U}{bbold}{m}{n}
\DeclareSymbolFontAlphabet{\mathbbold}{bbold}
\newcommand{\One}{\mathbbold{1}}
\newcommand{\Var}{\mathrm{Var}}

\renewcommand{\emptyset}{\varnothing}

\newcommand{\sym}{\mathrm{sym}}

\newcommand{\eps}{\varepsilon}
\renewcommand{\epsilon}{\varepsilon}

\DeclareMathOperator{\Aut}{Aut}

\DeclareMathOperator{\tr}{tr}

\renewcommand{\epsilon}{\varepsilon}
\renewcommand{\Tilde}{\widetilde}
\renewcommand{\hat}{\widehat}
\newcommand{\Unif}{\mathrm{Unif}}

\newcommand{\Adv}{\mathrm{Adv}}

\newcommand{\sA}{\mathcal{A}}

\newcommand{\sQ}{\mathcal{Q}}
\newcommand{\sP}{\mathcal{P}}
\newcommand{\sG}{\mathcal{G}}
\newcommand{\sS}{\mathcal{S}}
\newcommand{\sC}{\mathcal{C}}
\newcommand{\sI}{\mathcal{I}}
\newcommand{\sL}{\mathcal{L}}
\newcommand{\sK}{\mathcal{K}}
\newcommand{\sO}{\mathcal{O}}
\newcommand{\sR}{\mathcal{R}}
\newcommand{\sW}{\mathcal{W}}

\newcommand{\Sym}{\mathrm{Sym}}

\newcommand{\TV}{\mathrm{TV}}
\newcommand{\KL}{\mathrm{KL}}

\newcommand{\Pois}{\mathrm{Pois}}
\newcommand{\Bin}{\mathrm{Bin}}
\newcommand{\Emb}{\mathrm{Emb}}

\newcommand{\stub}{\mathsf{stub}}
\newcommand{\originalretained}{\mathsf{original{-}retained}}
\newcommand{\originaldeleted}{\mathsf{original{-}deleted}}
\newcommand{\rematched}{\mathsf{rematched}}
\newcommand{\del}{\mathsf{del}}
\newcommand{\ret}{\mathsf{ret}}
\newcommand{\support}{\mathsf{support}}
\newcommand{\cyc}{\mathsf{cyc}}
\newcommand{\head}{\mathsf{head}}
\newcommand{\tail}{\mathsf{tail}}

\newcommand\numberthis{\addtocounter{equation}{1}\tag{\theequation}}

\newif\ifnotes
\notestrue

\title{\Large \begin{tabular}{c}
Analysis of Polynomial Threshold Functions on Random Regular \\
Graphs: Computational Complexity of Detecting Noisy Random Lifts
\end{tabular}}
\author{Xifan Yu\thanks{Email: \texttt{xifan.yu@yale.edu}. X.Y.~is partially supported by 
	ONR Award N00014-24-12611.}}

\affil{Department of Computer Science, Yale University}

\begin{document}

\pagenumbering{roman}

\maketitle

\thispagestyle{empty}

\begin{abstract}
    In this work, we present the first analysis of low-degree polynomial threshold functions for the natural hypothesis testing problem of detecting the noisy random lift of a base $d$-regular graph from a uniformly random $d$-regular graph. Along the way, we obtain a new result for the distribution of short cycle counts in noisy random lift up to logarithmic lengths, which generalizes results by McKay, Wormald, and Wysocka and by Johnson in the case of random regular graphs, and results by Greenhill, Janson, and Ruci\'{n}ski and by Fortin and Rudinsky in the case of random lifts.
\end{abstract}

\clearpage

\pagestyle{empty}

\tableofcontents

\clearpage

\pagestyle{plain}
\pagenumbering{arabic}

\allowdisplaybreaks{

\section{Introduction}

We study the natural hypothesis testing problem of distinguish a uniformly random $d$-regular graph on $n$ vertices, from a noisy random lift on $n$ vertices of some fixed base $d$-regular graph $H$.

This problem has received interest due to its connections to computational aspects of certifying properties of random regular graphs \cite{kunisky2024computational, nagda2025reinforced}, and falls outside of the usual settings where most techniques in average-case analysis apply. In particular, most existing methods for proving computational lower bounds rely on the existence of explicit orthonormal basis for product distributions and stop working on the distribution of random regular graphs due to correlation between the edges. Despite a few works that make progress towards this direction, past literature that studies computational aspects of random regular graphs is limited to settings of local algorithms for ``unplanted'' random optimization problem \cite{gamarnik2014limits, rahman2017local}, basic SDP relaxations \cite{banks2019lovasz, deshpande2019threshold} and constant degree local statistics hierarchy \cite{banks2021local, bandeira2021spectral, kunisky2024computational} for detection and refutation tasks, and constant degree sum-of-squares hierarchy \cite{mohanty2020lifting} for certification tasks. More recently, \cite{xu2024switching} analyzed non-constant degree polynomial relaxations on random regular graphs, but in a regime where the degree of the regular graph is non-constant. The corresponding question for constant degree random regular graphs remains open, and providing analysis of non-constant degree polynomials for constant degree random regular graph distributions is explicitly raised as an open question in both \cite{kunisky2024computational} and \cite{wein2025computational}.

In this work, we address this open problem and present a new method for analyzing low-degree polynomials and polynomial threshold functions on random regular graphs.

\subsection{Our Contribution}

Our motivation for analyzing the problem of detecting random lifts comes from the following conjecture, which has implications for the computational hardness for certifying expansion property, cut sizes, and independence number of random regular graphs \cite{kunisky2024computational}. We formalize the precise definitions of random regular graphs and noisy random lifts in Section~\ref{sec:prelim}.

\begin{conjecture}[Informal {\cite[Conjecture 1.6]{kunisky2024computational}}] \label{conj:informal}
    Suppose $H$ is a base $d$-regular graph on $k$ vertices. The following hold:
    \begin{itemize}
        \item If $H$ is Ramanujan, then there is no polynomial-time algorithm that achieves strong detection between $\sQ^S$ the distribution of random regular simple graphs, and $\sP^S$ the distribution of simple random lifts of $H$ subject to a small linear amount of noise.
        \item If $H$ is bipartite Ramanujan, then there is no polynomial-time algorithm that achieves strong detection between $\sQ^{S,b}$ the distribution of random regular bipartite simple graphs, and $\sP^{S,b}$ the distribution of simple bipartite random lifts of $H$ subject to a small linear amount of noise.
    \end{itemize}
\end{conjecture}

\begin{remark}
    As pointed out in \cite{kunisky2024computational}, the small amount of noise is crucial for the conjecture. If a noiseless random lift of a fixed base graph $H$ is drawn, then the noiseless lift inherits all the eigenvalues of $H$, which appear in the spectrum of a random regular graph with probability $0$. However, this test that looks for specific eigenvalues of $H$ is extremely brittle to noise, and the conjecture postulates that any efficient test fails to strongly distinguish a random lift of a Ramanujan $H$ from a random regular graph if the observed graph is perturbed by a small amount of noise.
\end{remark}

In this work, we provide further evidence for Conjecture~\ref{conj:informal} by establishing the following hardness result for polynomial threshold functions (Definition~\ref{def:PTF}).

\begin{theorem}[PTF Hardness; Informal]
     Suppose $H$ is a base $d$-regular graph on $k$ vertices. The following hold:
    \begin{itemize}
        \item If $H$ is Ramanujan, then for some $D = \Omega_H(\log (n))$, no degree-$D$ symmetric polynomial threshold function achieves strong detection between $\sQ^S$ the distribution of random regular simple graphs, and $\sP^S$ the distribution of simple random lifts of $H$ subject to a small linear amount of noise.
        \item If $H$ is bipartite Ramanujan, then for some $D = \Omega_H(\log (n))$, no degree-$D$ symmetric polynomial threshold function achieves strong detection between $\sQ^{S,b}$ the distribution of random regular bipartite simple graphs, and $\sP^{S,b}$ the distribution of simple bipartite random lifts of $H$ subject to a small linear amount of noise.
    \end{itemize}
\end{theorem}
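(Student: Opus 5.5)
The plan is to reduce PTF hardness to a statement about the joint distribution of short cycle counts. A symmetric degree-$D$ polynomial in the edge indicators of a $d$-regular graph is invariant under vertex relabeling. In sparse random regular graphs, such a polynomial should be well approximated, in $L^2(\sQ)$, by a polynomial in the cycle counts $C_3,\dots,C_\ell$ with $\ell = O(D)$. The intuition is that any connected subgraph with at most $D$ edges whose excess is positive appears with vanishing probability. Moreover, by switching arguments, tree-like local structure contributes only deterministic quantities up to small fluctuations.

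\textbf{Step 1: approximate $f$ by a function of cycle counts.} Write $f = g(C_3,\dots,C_\ell) + r$, where the remainder $r$ is small under both $\sQ^S$ and $\sP^S$. This requires a variance or projection bound that holds under both measures. I would obtain it by decomposing $f$ into subgraph-count statistics. I would then show that any subgraph count, after centering by its conditional expectation given the cycle counts, has small second moment. The tools are the switching method or the configuration model, using that subgraphs with excess at least $2$ are absent whp.

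\textbf{Step 2: joint law of cycle counts.} Establish that under $\sQ^S$ the counts $(C_j)_{j\le \ell}$ are close in total variation to independent Poissons with means $(d-1)^j/(2j)$. This is the McKay--Wormald--Wysocka / Johnson result, extended to $\ell = c\log n$. Under the noisy lift $\sP^S$, the counts should be close to independent Poissons with means
\[
\frac{(d-1)^j + \rho^j\,(\text{nontrivial non-backtracking trace of } H \text{ at length } j)}{2j},
\]
where $\rho<1$ encodes the noise. This is the new cycle-count theorem advertised in the abstract. I would prove it by the method of moments, or by Stein--Chen with size-biased couplings, controlling factorial moments of cycles up to logarithmic length in the lift-plus-rematching model. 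Ramanujan-ness of $H$ gives $|\lambda_i(B_H)|\le\sqrt{d-1}$ for the nontrivial non-backtracking eigenvalues. Hence the mean perturbation $\mu'_j-\mu_j$ is $O(\rho^j (d-1)^{j/2})$, against a variance of order $(d-1)^j/j$.

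\textbf{Step 3: TV bound between Poisson vectors.} Compute the total variation, or a $\chi^2$-type distance, between the two product Poisson laws. The $\chi^2$ divergence between $\Pois(\mu)$ and $\Pois(\mu')$ is $e^{(\mu-\mu')^2/\mu}-1$. Summing over $j$ gives a total of roughly $\sum_j \rho^{2j} j$, which is finite and small whenever $\rho<1$; with a small amount of noise one can also tune it to $O(1)$. Since this does not vanish, the laws are not identical. However, no event separates them with probabilities $o(1)$ versus $1-o(1)$, because the total variation distance stays bounded away from $1$. Combining with Step 1, any $D$-PTF satisfies $\mathrm{sgn}(f)=\mathrm{sgn}(g(C))$ except on events that are small under both laws. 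The bounded total variation distance then rules out strong detection. The bipartite case is identical, using the bipartite non-backtracking spectrum with $\pm(d-1)$ excluded.

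\textbf{Main obstacle.} I expect Step 1 to be hardest. A threshold function is not controlled by $L^2$ closeness alone: one needs $r$ small relative to the anticoncentration of $g(C)$. I would first try a Carbery--Wright-type anticoncentration for polynomials of Poisson vectors. Failing that, I would apply a truncation argument on the value of $g$ to ensure that $\Pr[|g(C)|\le \|r\|]$ is small under both measures. Extending the cycle-count theorem in Step 2 to $\log n$ lengths under the noisy lift is the second major technical piece.
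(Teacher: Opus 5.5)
Your Steps 2 and 3 follow the paper's route. The paper proves Poisson convergence of cycle counts up to length $c\log n$ under both models by Chen--Stein, with means matching your formula for $\rho=1-\delta$. It then bounds the divergence between the product Poisson laws via the $\sqrt{d-1}$ bound on the nontrivial non-backtracking eigenvalues. Your $\chi^2$ computation is a fine substitute for its KL plus Bretagnolle--Huber step; the sum is bounded rather than small, but boundedness is all you need.

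\textbf{The gap is Step 1.} An $L^2$ decomposition $f=g(C)+r$ cannot be made to work uniformly over degree-$D$ polynomials, because the coefficients of $f$ are arbitrary. Take $f=M\cdot N_\theta+g(C)$, with $N_\theta$ the count of a short theta graph and $M$ huge. Then $\|f-\EE[f\mid C]\|_2$ is as large as you like, even though $N_\theta=0$ with high probability and the PTF is a function of $C$ off that event. Even when $r$ is small, passing from $L^2$ closeness to agreement of signs requires anticoncentration of $g(C)$, and that fails here. The vector $C$ is integer-valued, so $g(C)$ can carry an atom of constant mass at the threshold (e.g.\ $g=C_3$ at $0$). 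Carbery--Wright applies only to log-concave continuous laws. On such an atom the PTF output is decided by $\mathrm{sgn}(r)$, which you do not control.

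\textbf{The missing idea is that the reduction is exact on a high-probability event, so no $L^2$ or anticoncentration argument is needed.}

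First, the constraint $\sum_j Y_{ij}=d$ makes tree-like parts exactly deterministic, not deterministic up to small fluctuations. If $\alpha$ has a leaf $x$ attached to $y$, summing out the label of $x$ expresses $N_\alpha$ as a linear combination of two kinds of counts, all on fewer vertices: $N_{\alpha^-}$ (with $x$ removed) and the $N_{\alpha^z}$ (with $x$ removed and edge $\{y,z\}$ added). By induction, every symmetric $f$ of degree at most $D$ agrees on $d$-regular simple graphs with a linear combination of $N_\alpha$ having minimum degree at least $2$.

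Second, consider the event that $G$ has no connected subgraph with at most $2D$ edges and $|E|-|V|=1$. It has probability $1-o(1)$ under both $\sQ^S$ and $\sP^S$, by a first-moment bound using $\Pr(F\subseteq G)\le (K/n)^{|F|}$. On this event, short cycles are vertex-disjoint, and every such $N_\alpha$ is either $0$ or $\prod_t\binom{Z_t}{i_t}$. Thus $f=P(Z_3,\dots,Z_D)$ exactly off an $o(1)$ event under both laws, and
$|\Pr_{\sQ^S}(f\ge 0)-\Pr_{\sP^S}(f\ge 0)|\le d_{\TV}\bigl(\mathcal{L}_{\sQ^S}(Z_3,\dots,Z_D),\mathcal{L}_{\sP^S}(Z_3,\dots,Z_D)\bigr)+o(1)$.
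Your Steps 2 and 3 then bound the right-hand side away from $1$.
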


We also establish the following result on the distribution of short cycles in noisy random lifts, which is a main ingredient for establishing the PTF hardness above.
\begin{theorem}[Cycle Distribution; Informal]
    Suppose $H$ is a base $d$-regular graph on $k$ vertices, and let $B$ be its non-backtracking matrix (see Definition~\ref{def:nb-matrix}). Then, for some $D = \Omega_H(\log (n))$, the following hold:
    \begin{itemize}
        \item The distribution of cycle counts up to length $D$ in the distribution $\sP$ of noisy random lift of $H$ converges in total variation distance to a product of independent Poisson distributions $\bigotimes_{t=1}^D \Pois(\mu_t)$, where $\mu_t \colonequals (1-\delta)^t \left(\nu_t - \frac{(d-1)^t}{2t}\right) + \frac{(d-1)^t}{2t}$, $\delta \colonequals \frac{2r}{nd}$, and $\nu_t \colonequals \frac{\tr(B^t)}{2t}$.
        \item If $H$ is bipartite, then the distribution of cycle counts up to length $2D$ in the distribution $\sP^b$ of noisy bipartite random lift of $H$ converges in total variation distance to a product of independent Poisson distributions $\bigotimes_{t=1}^D \Pois(\mu_{2t}')$, where $\mu_{2t}' \colonequals (1-\delta)^{2t} \left(\nu_{2t} - \frac{(d-1)^{2t}}{2t}\right) + \frac{(d-1)^{2t}}{2t}$, $\delta \colonequals \frac{r}{nd}$, and $\nu_{2t} \colonequals \frac{\tr(B^{2t})}{4t}$.
    \end{itemize}
\end{theorem}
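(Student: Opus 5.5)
The plan is to use the method of moments in its factorial-moment form. I will pair it with a total-variation estimate that holds when the number of cycle lengths grows logarithmically, in the style of McKay--Wormald--Wysocka and Johnson. For the configuration-model version of $\sP$ (random lift of $H$ via $k\cdot\binom{d}{2}$-type random matchings between fibers, followed by $r$ noise operations that delete edges and rematch the freed half-edges uniformly), let $C_t$ be the number of $t$-cycles for $t\le D$. It suffices to show that for all tuples $(m_1,\dots,m_D)$ with $\sum_t t m_t \le c\log n$ we have
\[
\EE\prod_{t}(C_t)_{m_t} = \prod_t \mu_t^{m_t}\,\bigl(1+O(n^{-\epsilon})\bigr)^{\sum_t tm_t}.
\]
A standard inclusion--exclusion/Bonferroni argument, together with Poisson tail bounds, then converts this into a total variation bound. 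I will use the quantitative variant that tolerates $D=\Omega(\log n)$ because the error terms are polynomially small in $n$. Finally I transfer from the configuration model to simple graphs by conditioning on the absence of $1$- and $2$-cycles, which is a cycle event already controlled by the same joint law.

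For the first moment, I classify each potential $t$-cycle in the noisy lift by the pattern of its edges: original-retained versus rematched. A maximal run of retained lift edges projects to a non-backtracking walk in $H$. A cycle made entirely of retained edges corresponds to a closed non-backtracking walk in $H$ that lifts to a closed walk. Counting these via $\tr(B^t)$, with each edge surviving with probability $1-\delta$ asymptotically, gives $(1-\delta)^t\nu_t$. A cycle using at least one rematched edge behaves like a cycle in a random regular graph: the sum over all mixed patterns of $\binom{t}{j}(1-\delta)^{t-j}\delta^{j}$-weighted contributions equals $\frac{(d-1)^t}{2t}\bigl(1-(1-\delta)^t\bigr)$. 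Heuristically, once one edge is random, the position along the run is spread uniformly over the fiber, so the $H$-structure washes out. This yields exactly $\mu_t$. The bipartite case is identical with $B^{2t}$, the halved automorphism count, and $\delta = r/(nd)$.

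For higher factorial moments, I expand $\prod_t (C_t)_{m_t}$ as a sum over ordered tuples of distinct cycles. Tuples of vertex-disjoint cycles factor up to a $(1+O(\ell^2/n))$ correction. The main obstacle is bounding the overlapping tuples, whose union has positive excess, uniformly for total length up to $c\log n$. For these I plan to enumerate unions by their excess $e$ and bound the contribution by $(c'\ell)^{O(e)}(d-1)^{\ell}n^{-e}$. The delicate point is that in a lift, paths in the base graph can lift deterministically, so overlaps cost a factor $n^{-1}$ only when an actual random matching choice is forced. I would handle this by using the girth/tangle-free structure of the lift at scale $c\log n$, with $c$ small depending on the growth rate $\|B\|\le d-1$, and by bounding the number of non-backtracking walks in $H$ by $(d-1)^{\ell}$ times the counts of the corresponding "shapes". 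Choosing $c < 1/(10\log(d-1))$ ensures that $(d-1)^{\ell}n^{-1}$ remains $n^{-\Omega(1)}$ after summing over shapes, which closes the moment estimates.
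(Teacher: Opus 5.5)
There is a genuine gap in the step that converts factorial moments into a total variation bound. The target means are $\mu_t \approx \frac{(d-1)^t}{2t}$. For $t$ near $D = c\log n$ they are of order $n^{c\log(d-1)}/\log n \to \infty$.

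Factorial moments restricted to $\sum_t t m_t \le c\log n$ say almost nothing about these coordinates. For $t = D$ they only give $\EE[C_D]$, and the mean alone, however precisely known, cannot force $C_D$ to be close in total variation to $\Pois(\mu_D)$. More generally, truncating the inclusion--exclusion expansion of $\Pr(C_t = j)$ at order $K$ leaves an error of size about $\binom{K+1}{j}\mu_t^{K+1}/(K+1)!$. This is small only once $K$ is a large multiple of $\mu_t$ (and of $j$).

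So a moment method at logarithmic lengths needs joint factorial moments with $m_t \gtrsim \mu_t$ for every $t \le D$. That means unions of polynomially many cycles, with total length of order $(d-1)^D = n^{\Theta(c)}$. Your overlap estimate $(c'\ell)^{O(e)}(d-1)^{\ell}n^{-e}$ and your error factor $(1+O(n^{-\epsilon}))^{\sum_t t m_t}$ are only set up for $\ell \le c\log n$. In the regime actually required, pairwise overlaps among $m$ cycles already produce relative errors of order $m^2\ell^{O(1)}/n$, and there are far more overlap patterns to control. The ``quantitative variant'' you invoke cannot bridge this, which is why McKay--Wormald--Wysocka (switchings) and Johnson (Stein's method) did not use moments at logarithmic lengths.

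The paper avoids the issue with the Chen--Stein bound of Theorem~\ref{thm:chen-stein}. Its estimate $2(2b_1+2b_2+b_3)$ does not depend on the size of the means. The indicators range over local configurations of cycles, which record for each cycle edge whether it is original-retained or rematched. For rematched edges they also record which original edges were deleted to free the two stubs. The terms $b_1$ and $b_2$ need only the excess counting you sketch, and only for pairs of cycles of length at most $D$. The term $b_3$ is controlled by an explicit switching coupling between the law conditioned on $X_\alpha = 1$ and the unconditioned law.

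Your first-moment computation agrees with Section~\ref{sec:mean-computation}: all-retained cycles give $(1-\delta)^t\tr(B^t)/(2t)$, and cycles with at least one rematched edge give $\frac{(d-1)^t}{2t}\left(1-(1-\delta)^t\right)$. What must change is the distributional step. Either replace the moment/Bonferroni argument with a Chen--Stein or Stein-type bound, or carry the moment computation to polynomial order, which is much harder than what you describe. The statement concerns the multigraph law $\sP$, so conditioning on simplicity is not needed here.
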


\subsection{Related Works}

\paragraph{Low-Degree Polynomials and Polynomial Threshold Functions}

Since the introduction of Low-Degree Conjecture by Hopkins \cite{hopkins2018statistical}, there has been a substantial line of work analyzing low-degree polynomials as an attempt to establish average-case computational hardness, and it has been empirically observed that the low-degree method is very successful at predicting the computational phase transition for a great number of average-case problems. See \cite{kunisky2019notes} and \cite{wein2025computational} for surveys on low-degree methods. At a high level, the Low-Degree Conjecture posits the class of low-degree polynomials as being as powerful as any polynomial-time algorithm for solving ``natural'' hypothesis testing problems.

Recently, \cite{diakonikolas2025ptf} initiated the study of polynomial threshold functions, a more powerful class of algorithms compared to low-degree polynomial algorithms, in the setting of average-case complexity. Following this work, we study the class of polynomial threshold functions in the substantially different setting of random regular graph distributions, and rule out the class of low-degree symmetric polynomial threshold functions as evidence for Conjecture~\ref{conj:informal}, though our techniques differ from those in \cite{diakonikolas2025ptf}.

\paragraph{Cycle Distributions in Random Regular Graphs and Random Lifts}

There is a rich history of work in random graph theory community studying the statistical properties of random regular graphs and random lifts. One of the most well-studied problems in this area is the distribution of short cycles in random regular graphs. The classical work of \cite{wormald1981asymptotic} established the asymptotic convergence of fixed-length cycle counts to independent Poisson distributions in random regular graphs. \cite{mckay2004short} extended this result and established the convergence of short cycle counts up to logarithmic lengths in random regular graphs to a product of independent Poisson distributions using a switching argument, and it was later improved in \cite{Johnson_2015} that the convergence continues to hold for a larger constant in front of the logarithm. On the side of (noiseless) random lifts, \cite{greenhill2010number} and \cite{fortin2013asymptotic} showed the convergence of short cycle counts of constant lengths to a product of independent Poisson distributions by a method of factorial moments.

Our work studies a different new model of noisy random lift (Definition~\ref{def:random-lift} and Definition~\ref{def:bipartite-random-lift}), which covers the settings of both random regular graphs and noiseless random lifts by varying the noise parameter. We generalize the previous literature and establish the convergence of short cycle counts up to logarithmic lengths to a product of independent Poisson variables, though our proof using the Chen-Stein method for Poisson approximation is significantly more involved.

\paragraph{Spectra of Random Regular Graphs and Random Lifts}

Another related line of work that has been extensively studied is the spectra of random regular graphs and random lifts. Friedman \cite{friedman2008proof} showed that random regular graphs are near Ramanujan with high probability, resolving Alon's second eigenvalue conjecture \cite{alon1986eigenvalues}. A recent breakthrough \cite{huang2024ramanujan} by Huang, McKenzie, and Yau established the convergence of the second eigenvalue and the last eigenvalue of a random regular graph to the Tracy-Widom distribution, showing that asymptotically $69\%$ of random regular graphs are Ramanujan.

For random lifts, it is well-known that their spectra consist of two parts, the collection of ``old'' eigenvalues coming from the base graph and the collection of ``new'' eigenvalues. \cite{charles2020new} proved a statement analogous to second eigenvalue conjecture for the ``new'' eigenvalues of random lifts, and \cite{bordenave2019eigenvalues} showed that the collection of the ``new'' eigenvalues converges in Hausdorff distance to the spectrum of the universal cover of $H$, which is $[-2\sqrt{d-1}, 2\sqrt{d-1}]$ when $H$ is $d$-regular.

Our work draws a interesting connection between the spectra of the base graph $H$ in the noisy random lift, the distribution of short cycle counts, and the computational complexity of detecting noisy random lifts. We show that there is a sharp transition for the distributions of the short cycle counts and the symmetric PTF degree needed to strongly detect the noisy random lift between the case of Ramanujan base graph $H$ and the case of non-Ramanujan $H$.

\subsection{Open Questions}

We believe our work opens a few interesting directions.

\begin{enumerate}
    \item Can one establish computational hardness for the problem of detecting noisy random lifts (or other natural hypothesis testing problems involving random regular graphs) against higher degree polynomials and PTFs? Can the degree be improved to $\omega(\log(n))$ or $n^{\Omega(1)}$? Any potential argument must necessarily move away from assuming symmetric low-degree polynomials are functions of the cycle counts with high probability, as bicycles (see Lemma~\ref{lem:rare-event}, Remark~\ref{rem:bicycles}, and Lemma~\ref{lem:function-of-cycles}) start to emerge at super-logarithmic scale.
    \item Can one establish higher degree sum-of-squares lower bound or local-statistical lower bound for problems involving constant degree random regular graphs?
    \item Is there an algorithm that achieves strong detection between random regular graphs and noisy random lifts, which runs faster than in time $\exp(\Omega(n))$?
\end{enumerate}

\section*{Acknowledgements}

The author would like to thank Dmitriy Kunisky for introducing this problem and many helpful discussions on it.

GPT-5.4 Pro was used to identify the Chen-Stein method for proving Poisson approximation for Theorem~\ref{thm:cycle-counts}. The detailed analysis and the underlying coupling construction for the analysis of $b_3$ in Section~\ref{sec:b3-bound} is obtained in multiple interactive sessions with GPT-5.4 Pro, GPT-5.5 Pro, and GPT-5.6 Sol Pro, and the counting argument in Section~\ref{sec:statistical} is assisted by GPT-5.6 Sol Extra High. 

\section{Preliminaries}\label{sec:prelim}

We denote $[n] = \{1, 2, \dots, n\}$. We will use standard asymptotic notations $O(\cdot ), o(\cdot ), \Omega(\cdot ), \omega(\cdot )$, where the limits are taken as $n \to \infty$.

Throughout, graphs are undirected. A multigraph $G$ consists of a vertex set $V$ and an edge set $E$ that is a multiset of unordered pairs of vertices. An edge $\{v,v\} \in E(G)$ with the same endpoints is called a loop. Multiple edges in $E(G)$ that share the same endpoints $u$ and $v$ are called parallel edges. A multigraph without loops or parallel edges is called simple. Unless specified otherwise, graphs always refer to multigraphs. The adjacency matrix of a graph $G = (V, E)$ is a matrix indexed by $V$, whose diagonal entries at $(v,v)$ equals twice the number of loops at vertex $v$, and off-diagonal entries at $(u,v)$ equals the number of edges between vertices $u$ and $v$.

The degree of a vertex $v \in V(G)$ is equal to the sum of the contribution coming from its incident edges. Every loop attached to $v$ contributes degree $2$ and every edge with exactly one endpoint $v$ contributes degree $1$. We note that with this convention, the sum of the degrees of all the vertices in $G$ equals twice the number of edges in $G$. A graph $G$ is $d$-regular if all the vertices have degree $d$. A graph $G$ is bipartite if there exists a bipartition of the vertex set, so that all the edges of $G$ cross this partition.

For a subset of vertices $S \subseteq V(G)$, the induced subgraph of $G$ on $S$ is the graph with vertex set $S$ and edge set containing all the edges of $G$ with both endpoints in $S$. For two disjoint $S, T \subseteq V(G)$, the induced bipartite subgraph of $G$ on $S \sqcup T$ is the bipartite graph with bipartition of the vertex set $S \sqcup T$, and edge set containing all the edges of $G$ with one endpoint in $S$ and the other endpoint in $T$.

In the definition of graph lift below, we also allow a special type of edge called half-loop. Having identical endpoints, a half-loop is similar to a loop, but only contributes degree $1$ unlike a loop. Moreover, a half-loop at $v$ only contributes $1$ to the diagonal entry at $(v,v)$ of the adjacency matrix. In this work, half-loops will only appear in the base graphs of graph lifts. For a base graph $H$, we will use $E_{1}(H)$ to denote its multiset of half-loops, and $E_2(H)$ to denote its multiset of edges, so that $E(H) = E_1(H) \sqcup E_2(H)$. 

\begin{definition}[Graph Lift]
    Let $H$ be a graph on vertex set $[k]$ with adjacency matrix $M$, with $M_{i,i}$ half-loops attached to vertex $i \in [k]$, and $M_{i,j}$ parallel edges between distinct vertices $i,j \in [k]$. A graph $G = (V, E)$ is an $m$-lift of $H$ if $|V| = km$ and there exists a balanced partition $\sigma: V \to [k]$ (i.e., having $|\sigma^{-1}(i)| = m$ for every $i \in [k]$) such that
    \begin{itemize}
        \item For every $i \in [k]$, $\sigma^{-1}(i)$ induces a $M_{i,i}$-regular graph on $G$.
        \item For every pair $\{i,j\} \in \binom{[k]}{2}$, $\sigma^{-1}(i) \sqcup \sigma^{-1}(j)$ induces a $M_{i,j}$-regular bipartite graph on $G$, with bipartition $\sigma^{-1}(i) \sqcup \sigma^{-1}(j)$.
    \end{itemize}
\end{definition}

\begin{definition}[Base Graph and Fiber]
    In the context of graph lift as above, we refer to $H$ as the base graph, and $\sigma^{-1}(i) \subseteq V(G)$ as the fiber of $i \in V(H)$. 
\end{definition}

\subsection{Configuration Model and Noisy Random Lift}

Now, we define the random regular graph distributions that we study in this paper.

\begin{definition}[Configuration Model]
    Let $n,m \in \NN$ and $d_1, d_2, \dots, d_n \in \NN$ be a degree sequence such that $\sum_{i=1}^n d_i = 2m$. The configuration model draws a random graph on $n$ vertices and $m$ edges with the given degree sequence as follows:
    \begin{itemize}
        \item Create $d_i$ stubs (or half edges) for every $i \in [n]$.
        \item Draw a perfect matching of the $2m$ stubs uniformly at random.
        \item Connect the stubs paired by the matching to form edges.
    \end{itemize}
\end{definition}

\begin{definition}[Bipartite Configuration Model]
    Let $n_1, n_2,m \in \NN$ and $\ell_1, \ell_2, \dots, \ell_{n_1}, r_1, r_2, \dots, r_{n_2} \in \NN$ be a degree sequence such that $\sum_{i=1}^{n_1} \ell_i = \sum_{i=1}^{n_2} r_i  = m$. The bipartite configuration model draws a random bipartite graph on $n_1 + n_2$ vertices and $m$ edges with the degrees of one part given by $\ell_i$'s and the degrees of the other part given by $r_i$'s as follows:
    \begin{itemize}
        \item Create $\ell_i$ L-stubs for every $i \in [n_1]$ and $r_i$ R-stubs for every $i \in [n_2]$.
        \item Draw a perfect matching between the $m$ L-stubs and the $m$ R-stubs uniformly at random.
        \item Connect the stubs paired by the matching to form edges.
    \end{itemize}
\end{definition}

\begin{definition}[Random Regular Graph]
    A random $d$-regular graph is generated from the configuration model defined above. A random $d$-regular simple graph is generated from the configuration model conditioned on the sampled graph is simple.
\end{definition}

\begin{definition}[Random Bipartite Regular Graph]
    A random bipartite $d$-regular graph is generated from the bipartite configuration model defined above. A random bipartite $d$-regular simple graph is generated from the bipartite configuration model conditioned on the sampled graph is simple.
\end{definition}

\begin{definition}[Noisy Random Lift]\label{def:random-lift}
    Let $H$ be a base non-bipartite $d$-regular graph on vertex set $[k]$ potentially with half-loops. Let $M$ be the adjacency matrix of $H$. Let $m \in \NN$ and $r \in \left\{0, 1, \dots, \frac{nd}{2}\right\}$ where $n \colonequals mk$. Assume that $m$ is even\footnote{This condition is assumed so that perfect matchings exist on the fiber $V_i$ for every $i \in [k]$, and the noisy random lift model is well-defined.} for every $i \in [k]$. An $r$-noisy random $m$-lift of $H$ is generated as follows:
    \begin{itemize}
        \item For every $i \in [k]$, create a fiber $V_i$ of $m$ vertices.
        \item For every edge $e \in E(H)$ connecting two distinct vertices $i \ne j$, independently draw a bijection $\sigma_e: V_i \to V_j$ uniformly at random, and add $m$ edges between $V_i$ and $V_j$ according to $\sigma_e$.
        \item For every half-loop $e \in E(H)$ attached to vertex $i$, independently draw a perfect matching\footnote{By perfect matching, we mean a bijection $\sigma_e: V_i \to V_i$ such that $\sigma_e(v) \ne v$ and $\sigma_c^2(v) = v$ for every $v \in V_i$.} $\sigma_e: V_i \to V_i$ uniformly at random, and add $\frac{m}{2}$ edges on $V_i$ according to $\sigma_e$.
    \end{itemize}
        So far, a random $d$-regular graph on $V \colonequals \bigsqcup_{i=1}^k V_i$ is created, which is an $m$-lift of $H$. Next, a random subset of $r$ edges gets deleted and rematched to create the final $r$-noisy random $m$-lift:
    \begin{itemize}
        \item Select an $r$-subset of the $\frac{nd}{2}$ formed edges uniformly at random and delete these edges, exposing a total of $2r$ stubs $\mathcal{R}$.
        \item Draw a perfect matching $\sigma_{\sR}$ of the stub collection $\mathcal{R}$ uniformly at random, and add $r$ edges back according to $\sigma_{\sR}$.
    \end{itemize}
\end{definition}

\begin{definition}[Noisy Bipartite Random Lift]\label{def:bipartite-random-lift}
    Let $H$ be a base bipartite $d$-regular graph on vertex set $[2k]$, whose bipartition is $\{1, \dots, k\} \sqcup \{k + 1, \dots, 2k\}$. Let $M$ be the adjacency matrix of $H$. Let $m \in \NN$ and $r \in \{0, 1, \dots, nd\}$ where $n \colonequals mk$. An $r$-noisy bipartite random $m$-lift of $H$ is generated as follows:
    \begin{itemize}
        \item For every $i \in [2k]$, create a fiber $V_i$ of $m$ vertices.
        \item For every edge $e \in E(H)$ connecting two distinct vertices $i \ne j$, independently draw a bijection $\sigma_e: V_i \to V_j$ uniformly at random, and add $m$ edges between $V_i$ and $V_j$ according to $\sigma_e$.
    \end{itemize}
        So far, a random bipartite $d$-regular graph on $V \colonequals \bigsqcup_{i=1}^{2k} V_i$ is created with bipartition $(\bigsqcup_{i=1}^k V_i) \sqcup (\bigsqcup_{i=k+1}^{2k} V_i)$, which is an $m$-lift of $H$. Next, a random subset of $r$ edges gets deleted and rematched to create the final $r$-noisy bipartite random $m$-lift:
    \begin{itemize}
        \item Select an $r$-subset of the $nd$ formed edges uniformly at random and delete these edges, exposing a total of $2r$ stubs partitioned into $\mathcal{R}_1 \sqcup \mathcal{R}_2$ according to the bipartition $(\bigsqcup_{i=1}^k V_i) \sqcup (\bigsqcup_{i=k+1}^{2k} V_i)$.
        \item Draw a bijection $\sigma_{\sR}: \mathcal{R}_1 \to \mathcal{R}_2$ uniformly at random, and add $r$ edges back according to $\sigma_{\sR}$.
    \end{itemize}
\end{definition}

\begin{remark}[Stub Viewpoint and Oriented Edge Set] \label{rem:directed-edges}
    Note that we adopt the following stub (half-edge) viewpoint of the noisy random lift model. For convenience, we work with the following directed version $\vec{E}(H)$ of the edge set of $H$, defined as follows.
    \begin{itemize}
        \item For every half-loop $e \in E(H)$ attached to vertex $i$, we add one directed edge $(i, e)$ to $\vec{E}(H)$, which is understood as a directed edge with tail at $i$ and head at $i$, indexed by $e$ \footnote{We keep the undirected edge $e$ in the notation because the base graph $H$ need not be simple, so between the same pair of endpoints, there could be multiple edges}.
        \item For every edge $e \in E(H)$ connecting two distinct vertices $i \ne j$, we add two directed edges $(i,e)$ and $(j,e)$ to $\vec{E}(H)$, where $(i,e)$ is understood as a directed edge with tail at $i$ and head at $j$, indexed by $e$, and $(j,e)$ is understood similarly as a directed edge with tail at $j$ and head at $i$, indexed by $e$.
    \end{itemize}
    We use $\iota: \vec{E}(H) \to \vec{E}(H)$ to denote the map that sends any directed edge to its reverse. More precisely, if $e$ is a half-loop attached to $i$, then $\iota(i,e) = (i,e)$; if $e$ is an edge connected distinct $i \ne j$, then $\iota(i,e) = (j,e)$. We further use $\tail(i,e) \colonequals i$ to denote the tail of a directed edge, and $\head(i,e) \colonequals \tail(\iota(i,e))$ to denote the head of a directed edge.
    
    For every edge $e \in E(H)$ connecting two distinct vertices $i,j \in [k]$, we attach a stub of type $(i,e) \in \vec{E}(H)$ attached to every $v \in V_i$ and a stub of type $(j,e) \in \vec{E}(H)$ attached to every $u \in V_j$. Then, adding $m$ edges between $V_i$ and $V_j$ according to $\sigma_e$ is equivalently obtained by pairing the $m$ stubs of type $(i,e)$ at $V_i$ and the $m$ stubs of type $(j,e)$ at $V_j$ uniformly at random.

    Similarly, for every half-loop $e \in E(H)$ at vertex $i \in [k]$, we attach a stub of type $(i,e) \in \vec{E}(H)$ to every $v \in V_i$. Adding $\frac{m}{2}$ edges on $V_i$ according to $\sigma_e$ is equivalently obtained by drawing a perfect matching of the $m$ stubs of type $(i,e)$ at $V_i$ uniformly at random.

    Using this stub viewpoint, every edge in the noisy random lift is a stub-pair, for the stubs described above.
\end{remark}

\begin{remark}
    In the models of noisy random lifts defined above, the underlying fiber partition of the vertex set $V$ is treated as a uniformly random equitable partition, i.e., we apply a uniform random relabelling to hide the vertex labels.
\end{remark}

\begin{remark}
    If $r = 0$, then the $r$-noisy random lift model defined above corresponds to the classical random lift model \cite{amit2001random}. If $r = \frac{nd}{2}$ (or $nd$ in the bipartite case), then the $r$-noisy random lift model corresponds to random (bipartite) regular graph distribution.
\end{remark}

\subsection{Computational Complexity of Hypothesis Testing}

Let $\sQ = \sQ_n$ and $\sP = \sP_n$ be two sequences of distributions supported on $\RR^N$ where $N = N(n)$ is a polynomial in $n$. The hypothesis testing problem between $\sQ$ and $\sP$ asks for a function $f: \RR^N \to \{0, 1\}$ such that $f$ distinguishes $\sQ$ and $\sP$ in the following sense of strong detection.

\begin{definition}[Strong Detection]
    $f: \RR^N \to \{0, 1\}$ is said to achieve strong detection between $\sQ$ and $\sP$ if as $n \to \infty$,
    \begin{align*}
        \Pr_{Y \sim \sQ}(f(Y) = 1) + \Pr_{Y \sim \sP}(f(Y) = 0) = o(1).
    \end{align*}
\end{definition}

If one does not restrict the complexity of computing such an $f$, then the problem becomes purely statistical and the optimal function is given by the likelihood ratio test \cite{neyman1992problem}. However, for many high-dimensional hypothesis testing problems, computing or even approximating the likelihood ratio is extremely challenging, and it has been observed that the best-known polynomial time algorithms often do not achieve the information-theoretic limits. It is thus a natural question to study when efficient algorithms can solve a hypothesis testing problem.

Since the classical theory of NP-hardness does not apply to an average-case setting like hypothesis testing, researchers have resorted to other ways of providing evidence of computational hardness for average-case problems, with canonical examples including the Planted Clique (see \cite{jerrum1992large, alon1998finding, barak2019nearly}) and the sparse PCA (see \cite{berthet2013optimal, hopkins2017power, brennan2018reducibility}). One method that has been widely adopted is to show that concrete classes of powerful algorithms fail to solve a problem. Among a variety of classes of algorithms, low-degree polynomials have received special attention due to the Low-Degree Conjecture \cite{hopkins2018statistical}, which posits that for most natural high dimensional hypothesis testing problem, the best low-degree polynomial has the same performance as the optimal polynomial-time algorithm.

Past works that prove hardness for low-degree polynomials mostly focused on bounding the so-called low-degree advantage, defined as follows.
\begin{definition}[Low-Degree Advantage]
    Let $\sQ$ and $\sP$ be two distributions supported on $\RR^N$. The degree-$D$ advantage between $\sQ$ and $\sP$ is
    \begin{align*}
        \Adv_{\le D}(\sQ, \sP) \colonequals \sup_{\substack{f \in \RR[Y]_{\le D}:\\ \EE_{\sQ}[f(Y)^2] \ne 0}} \frac{\EE_{\sP}[f(Y)]}{\sqrt{\EE_{\sQ}[f(Y)^2]}}.
    \end{align*}
\end{definition}

\begin{conjecture}[Low-Degree Conjecture {\cite{hopkins2018statistical}}]
    For ``nice'' sequences of distributions $\sP = \sP_n$ and $\sQ = \sQ_n$ supported on $\RR^N$ for some $N = N(n)$ that is a polynomial in $n$, if for some $D \ge \log(n)^{1 + \eps}$ with $\eps > 0$ fixed, the degree-$D$ advantage between $\sP$ and $\sQ$ is bounded as $n \to \infty$, i.e., $\Adv_{\le D}(\sQ, \sP) = O(1)$, then no polynomial time algorithm achieves strong detection between $\sQ$ and $\sP$.
\end{conjecture}

The Low-Degree Conjecture has proved to be successful for predicting the computational complexity of many hypothesis testing problems, but its rigorous implications so far remain somewhat limited \cite{hsieh2026rigorous, yu2025counting, bangachev2025graph, chen2026rigorous}. Furthermore, recent works \cite{buhai2025quasi, mao2026polynomial} have revealed that more structural assumptions on the hypothesis testing problem are needed in order for the conjecture to hold. Nevertheless, we still treat Low-Degree Conjecture as a heuristic conjecture that is expected to apply to many natural problems, and consider hardness against low-degree polynomials to be convincing evidence for the computational hardness of hypothesis testing problems.

One rigorous consequence of bounding the low-degree advantage is that it rules out low-degree polynomials that achieve a notion called strong separation.

\begin{definition}[Strong Separation]
    A function $f: \RR^N \to \RR$ is said to achieve strong separation between $\sQ$ and $\sP$ if as $n \to \infty$,
    \begin{align*}
        \left|\EE_{\sP}[f(Y)] - \EE_{\sQ}[f(Y)]\right| = \omega\left(\max\left\{\sqrt{\Var_{\sP}[f(Y)]}, \sqrt{\Var_{\sP}[f(Y)]}\right\}\right).
    \end{align*}
\end{definition}

\begin{proposition}[{\cite[Lemma 7.3]{coja2022statistical}}]
    Let $\sQ$ and $\sP$ be two sequences of distributions supported on $\RR^N$. If $\Adv_{\le D}(\sQ, \sP) = O(1)$, then no sequence of polynomials of degree at most $D$ strongly separates $\sQ$ and $\sP$.
\end{proposition}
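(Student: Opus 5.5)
We argue by contradiction, using only the variational definition of $\Adv_{\le D}$. Suppose a sequence of polynomials $f = f_n$ of degree at most $D$ strongly separates $\sQ$ and $\sP$. The idea is to recenter and rescale $f$ into a degree-$\le D$ test polynomial $g$ whose ratio $\EE_\sP[g]/\sqrt{\EE_\sQ[g^2]}$ is unbounded. This contradicts $\Adv_{\le D}(\sQ,\sP) = O(1)$.

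\textbf{Construction of $g$.} Write $\mu_\sQ = \EE_\sQ[f]$, $\mu_\sP = \EE_\sP[f]$, $\Delta = \mu_\sP - \mu_\sQ$, and $\sigma^2 = \max\{\Var_\sQ[f], \Var_\sP[f]\}$. We read the definition of strong separation with the maximum over both variances, as intended. Strong separation says $|\Delta| = \omega(\sigma)$, so in particular $\Delta \neq 0$. Replacing $f$ by $-f$ if needed, we may assume $\Delta > 0$. Set
\begin{align*}
g \colonequals f - \mu_\sQ ,
\end{align*}
which is still a polynomial of degree at most $D$, since subtracting a constant does not raise the degree.

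\textbf{Comparing the two moments.} We have $\EE_\sP[g] = \Delta$ and $\EE_\sQ[g^2] = \Var_\sQ[f] \le \sigma^2$. If $\EE_\sQ[g^2] = 0$, then $g$ is not admissible in the supremum. In that case we perturb it: take $g_\epsilon = g + \epsilon$ for a small constant $\epsilon>0$. Then $\EE_\sQ[g_\epsilon^2] = \epsilon^2$ and $\EE_\sP[g_\epsilon] = \Delta + \epsilon$, so the ratio is at least $\Delta/\epsilon$, which is unbounded as $\epsilon\to 0$. This already contradicts the bound. Otherwise,
\begin{align*}
\Adv_{\le D}(\sQ,\sP) \ \ge\ \frac{\EE_\sP[g]}{\sqrt{\EE_\sQ[g^2]}} \ \ge\ \frac{\Delta}{\sigma} \ =\ \omega(1),
\end{align*}
which contradicts $\Adv_{\le D}(\sQ,\sP) = O(1)$.

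\textbf{Where care is needed.} Note that the variance under $\sP$ plays no role. Separation measured against the $\sQ$-variance alone already suffices, which is why we may use the maximum. The only real subtlety is the bookkeeping above: the sign normalization, the degenerate case where $g$ has zero second moment under $\sQ$, and the fact that the conclusion is asymptotic along the sequence $n$. For the last point, the contradiction along any subsequence where the ratio $\Delta/\sigma$ diverges suffices. The main point to check is that the advantage is a supremum over all degree-$\le D$ polynomials without any normalization constraint beyond nonzero $\sQ$-norm, so both the shift by a constant and the sign flip are allowed.
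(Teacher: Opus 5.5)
Your argument is correct: plugging the $\sQ$-centered polynomial $g = f - \EE_{\sQ}[f]$ into the supremum defining $\Adv_{\le D}$ gives $\Adv_{\le D}(\sQ,\sP) \ge |\Delta|/\sqrt{\Var_{\sQ}[f]} = \omega(1)$. Your handling of the sign, the degenerate case $\Var_{\sQ}[f]=0$, and your remark that the $\sP$-variance is never used are all sound. The paper gives no proof of its own and simply cites \cite[Lemma 7.3]{coja2022statistical}, whose argument is this same standard centering step, so your route is essentially the intended one.
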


\begin{remark}\label{rem:separation-implies-detection}
    Note that the strong separation is a natural notion that implies strong detection. If some function $f$ achieves strong separation, then thresholding it at some value achieves strong detection by Chebyshev's inequality. However, the reverse direction is not true in general.
\end{remark}

In this work, we study a more general class of algorithms called the low-degree polynomial threshold functions. 

\begin{definition}[Polynomial Threshold Function] \label{def:PTF}
    A degree-$D$ polynomial threshold function (PTF) is specified by degree-$D$ polynomial $f$, whose output equals to the sign function of the polynomial output. In other words, the PTF outputs $1$ if $f \ge 0$, and $0$ if $f < 0$.
\end{definition}

We view the failure of low-degree polynomial threshold functions as stronger evidence for the corresponding hypothesis testing problem, as they are strictly more powerful than the class of low-degree polynomials.

\section{Main Results}
In this section, we present our main results. Fix $d \ge 3$. Let $\sQ = \sQ_n$ be the distribution of random $d$-regular graphs on $n$ vertices, and $\sQ^{b} = \sQ^{b}_n$ be the distribution of random bipartite $d$-regular graphs on $2n$ vertices. Let $\delta \in [0,1]$ be a constant. For $H$ a fixed base $d$-regular graph on $k$ vertices potentially with half-loops, let $\sP = \sP_n(H, \delta)$ be the distribution of $r$-noisy random $m$-lift of $H$ where $r \colonequals \lceil \delta \frac{nd}{2} \rceil$ and $n \colonequals mk$. For $H$ a fixed base bipartite $d$-regular graph on $2k$ vertices with a given bipartition, let $\sP^{b} = \sP^{b}_n(H, \delta)$ be the distribution of $r$-noisy bipartite random $m$-lift of $H$ where $r \colonequals \lceil \delta nd \rceil$ and $n \colonequals mk$. Let $S$ denote the collection of simple graphs. Let $\sQ^S, \sQ^{b,S}, \sP^S, \sP^{b,S}$ denote the corresponding distributions conditioned on the graph being simple. 

In this work, we always assume the base graph $H$ is connected, as otherwise for small enough $\delta$, the noisy random lift distribution is easy to distinguish from the random regular graph distribution, by computing the second largest eigenvalue for example.

\subsection{PTF Hardness for Detecting Noisy Random Lift of Ramanujan Graph}

Our first main result is the following PTF hardness for detecting noisy random lift of a Ramanujan base graph. Recall the definition of Ramanujan graph.

\begin{definition}[Ramanujan Graph]
    Fix $d \ge 3$. Let $H$ be a connected $d$-regular graph on $k$ vertices and $M$ be its adjacency matrix. Suppose $d = \lambda_1(M) > \lambda_2(M) \ge \dots \ge \lambda_k(M)$ are the eigenvalues of $M$.
    
    We say $H$ is Ramanujan if $\max_{\lambda_i(M) \ne d}\{|\lambda_i(M)|\} \le 2\sqrt{d-1}$.

    We say $H$ is bipartite Ramanujan if $H$ is bipartite and $\max_{|\lambda_i(M)| \ne d}\{|\lambda_i(M)|\} \le 2\sqrt{d-1}$.
\end{definition}

\begin{theorem}[PTF Hardness]\label{thm:ptf-hardness}
    Fix $d \ge 3$ and $\delta > 0$.
    
    Suppose $H$ is a base $d$-regular Ramanujan graph on $k$ vertices potentially with half-loops. Then, there exists a constant $c = c(H) > 0$ such that for any $D \le c\log(n)$, no symmetric\footnote{A symmetric PTF is specificed by a symmetric polynomial $f$ in the graph variables, in the sense that $f$ is invariant under permutation of vertex labels. See Definition~\ref{def:sym-PTF} for more details. We believe this is a natural requirement, in that it assumes the output of the PTF and the value of the underlying polynomial is a function of the isomorphism class of the sampled graph and does not depend on the specific labellings of the vertices.} PTF of degree at most $D$ achieves strong detection between $\sQ^S$ and $\sP^S$.

    Suppose $H$ is a base bipartite $d$-regular Ramanujan graph on $2k$ vertices with a given bipartition. Then, there exists a constant $c = c(H) > 0$ such that for any $D \le c\log(n)$, no symmetric PTF of degree at most $D$ achieves strong detection between $\sQ^{b,S}$ and $\sP^{b,S}$.
\end{theorem}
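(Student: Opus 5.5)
Our plan is to reduce any symmetric PTF of degree $D \le c\log(n)$ to a function of the short cycle counts $(C_1, \dots, C_D)$, and then show that the laws of this cycle-count vector under $\sQ^S$ and $\sP^S$ are asymptotically mutually contiguous, with bounded $\chi^2$ divergence, when $H$ is Ramanujan. Strong detection by a function of a vector that is contiguous under the two hypotheses is then impossible. We first invoke the structural lemmas referenced in the open questions, namely Lemma~\ref{lem:rare-event} and Lemma~\ref{lem:function-of-cycles}. For $D \le c\log n$ with $c$ small, with probability $1-o(1)$ under both $\sQ$ and $\sP$, every connected subgraph on at most $D$ edges contains at most one cycle: there are no bicycles of size $O(D)$. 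On this event a symmetric polynomial of degree $\le D$ evaluated on the graph equals a fixed function $F(C_1,\dots,C_D)$ of the cycle counts. The reason is that it is a linear combination of subgraph counts of graphs with at most $D$ edges, and the count of any unicyclic or forest pattern is determined by $n$, $d$ and the cycle counts. So the PTF output is $\One[F(C_{\le D})\ge 0]$ up to an $o(1)$-probability event under each distribution. Conditioning on simplicity amounts to conditioning on $C_1=C_2=0$, which is a positive-probability event under both models, so the same reduction holds for $\sQ^S$ and $\sP^S$.

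Next we invoke the cycle-distribution theorem (Theorem~\ref{thm:cycle-counts}). Under $\sQ$ the vector $C_{\le D}$ is within $o(1)$ in TV of $\bigotimes_t \Pois(\lambda_t)$ with $\lambda_t = \frac{(d-1)^t}{2t}$ (the $\delta=1$ case). Under $\sP$ it is close to $\bigotimes_t \Pois(\mu_t)$ with $\mu_t - \lambda_t = (1-\delta)^t(\nu_t - \lambda_t)$. For two product Poisson laws,
\begin{align*}
1+\chi^2\Big(\textstyle\bigotimes_t \Pois(\mu_t)\,\Big\|\,\bigotimes_t\Pois(\lambda_t)\Big) = \exp\Big(\sum_{t\le D} \frac{(\mu_t-\lambda_t)^2}{\lambda_t}\Big).
\end{align*}
Here $\nu_t - \lambda_t = \frac{1}{2t}\big(\tr(B^t) - (d-1)^t\big)$. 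By Ihara--Bass, the eigenvalues of $B$ other than $d-1$ (and the trivial $\pm1$) have modulus exactly $\sqrt{d-1}$ exactly when $H$ is Ramanujan. Hence $|\nu_t-\lambda_t| \le C_H (d-1)^{t/2}/t$, with $C_H$ depending on the size of $B$ and on the $\pm 1$ eigenvalues. The summand is therefore $O\big((1-\delta)^{2t}/t\big)$, which is summable since $\delta>0$, and the $\chi^2$ divergence is $O_H(1)$ uniformly in $D$. Conditioning both sides on $\{C_1=C_2=0\}$ preserves bounded $\chi^2$. In the bipartite case only even $t$ appear, and the modulus $d$ eigenvalue $-d$ of the adjacency matrix corresponds to $-(d-1)$ for $B$, which is already absorbed into the even-$t$ mean $\frac{(d-1)^{2t}}{2t}$. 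The same computation therefore applies with $\mu'_{2t}$.

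To conclude, suppose the PTF satisfies $\Pr_{\sQ^S}(f\ge0)=o(1)$. Then the event $A=\{F(C_{\le D})\ge 0\}$ has $\Pr_{\Pois\text{-}\sQ}(A)=o(1)$ after the TV approximation. By Cauchy--Schwarz, $\Pr_{\Pois\text{-}\sP}(A) \le \sqrt{(1+\chi^2)\Pr_{\Pois\text{-}\sQ}(A)} = o(1)$, so $\Pr_{\sP^S}(f<0)=1-o(1)$ and strong detection fails.

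The main obstacle is the uniform TV approximation of Theorem~\ref{thm:cycle-counts} for $D$ growing like $c\log n$. A secondary concern is the need for the approximation error to be $o(1)$ even though the number of coordinates grows. Bounded $\chi^2$ is robust, but the TV approximation must hold jointly for all $D$ coordinates. We take this from the cycle-distribution theorem, proved by Chen--Stein. We would also double-check that the reduction to cycle counts holds simultaneously for all polynomials, which requires the no-bicycle event to be polynomial-independent; it is.
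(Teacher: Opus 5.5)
Your architecture is the same as the paper's. You rule out bicycles of size $O(D)$ with Lemma~\ref{lem:rare-event}, reduce the PTF to a function of $(Z_3,\dots,Z_D)$, and pass to product Poisson laws via Theorem~\ref{thm:cycle-counts} after conditioning on $Z_1=Z_2=0$. You then use Ihara--Bass to get $|\tr(B^t)-(d-1)^t|\le (kd-1)(d-1)^{t/2}$.

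Your closing step is a legitimate variant. Write $\lambda_t=\frac{(d-1)^t}{2t}$. The paper bounds $\KL\le\sum_t(\mu_t-\lambda_t)^2/\lambda_t$ and applies Bretagnolle--Huber to get total variation at most $1-K$. You observe that the same sum equals $\log(1+\chi^2)$ exactly for product Poissons and then use Cauchy--Schwarz. This gives the slightly stronger one-sided contiguity: events of vanishing $\sQ^S$-probability have vanishing $\sP^S$-probability.

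The gap is in reducing an arbitrary symmetric polynomial to a function of cycle counts. Lemma~\ref{lem:function-of-cycles} only covers $f\in\sK_{\le D}$, i.e.\ linear combinations of $N_\alpha$ with minimum degree $\delta(\alpha)\ge 2$. On $R^c_{\le 2D}$ these vanish unless $\alpha$ is a disjoint union of cycles, and then $N_\alpha=\prod_t\binom{Z_t}{i_t}$. A general symmetric $f$ also contains patterns with leaves. Your assertion that forest and unicyclic pattern counts are ``determined by $n$, $d$ and the cycle counts'' is exactly what has to be proved, and it is not a local triviality. Such counts depend on cycles through degenerate overlaps: for example, the number of paths with three edges in a $d$-regular simple graph is $\frac{nd}{2}(d-1)^2-3Z_3$. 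Establishing this in general needs an actual argument that uses the degree constraints.

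The paper supplies this with Lemma~\ref{lem:leaf-peeling}. For a leaf $x$ adjacent to $y$, sum out the label of $x$ and use the constraints $\sum_a Y_{\tau(y),a}=d$ and $Y_{ii}=0$. This rewrites $N_\alpha$, modulo the regular-graph constraints, as a linear combination of $N_{\alpha^-}$ (leaf removed) and the $N_{\alpha^z}$ (leaf removed, edge $\{y,z\}$ added). All of these have fewer vertices and at most as many edges. Induction on the number of vertices then places every symmetric polynomial of degree $\le D$ in $\sK_{\le D}$, identically on all $d$-regular simple graphs. Insert this step, and only then invoke Lemma~\ref{lem:function-of-cycles}; with that, your argument goes through.
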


As a corollary, the same hardness also applies to the multigraph setting, since the event $S$ has probability bounded away from $0$ under $\sQ, \sQ^b, \sP$ and $\sP^b$ by Corollary~\ref{cor:simple-prob}.

\begin{corollary}\label{cor:multigraph-hardness}
    The PTF hardness above also applies to multigraph distributions $\sQ, \sP$ and $\sQ^b, \sP^b$, 
\end{corollary}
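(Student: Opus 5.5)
We argue by contradiction, and the proof is a short reduction to Theorem~\ref{thm:ptf-hardness}. The key input is Corollary~\ref{cor:simple-prob}: the event $S$ has probability bounded below by some constant $p_0 > 0$, uniformly in $n$, under each of $\sQ$, $\sP$, $\sQ^b$ and $\sP^b$. We treat the non-bipartite case; the bipartite case is word-for-word the same with $\sQ^b, \sP^b, \sQ^{b,S}, \sP^{b,S}$ in place of $\sQ, \sP, \sQ^S, \sP^S$.

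Suppose a symmetric PTF $g = \One[f \ge 0]$ of degree $D \le c\log(n)$ achieves strong detection between the multigraph distributions $\sQ$ and $\sP$. Then
\begin{align*}
\Pr_{\sQ}(g = 1) + \Pr_{\sP}(g = 0) = o(1).
\end{align*}
We transfer this to the conditioned distributions by a direct comparison:
\begin{align*}
\Pr_{\sQ^S}(g=1) = \frac{\Pr_{\sQ}(g=1,\, S)}{\Pr_{\sQ}(S)} \le \frac{\Pr_{\sQ}(g=1)}{p_0} = o(1),
\end{align*}
and in the same way $\Pr_{\sP^S}(g=0) \le \Pr_{\sP}(g=0)/p_0 = o(1)$. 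Hence the same PTF $g$ strongly detects between $\sQ^S$ and $\sP^S$.

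Two points need checking for this to contradict Theorem~\ref{thm:ptf-hardness}. First, the object applied to simple graphs is literally the same symmetric polynomial $f$, with the same degree and the same symmetry. Restricting attention to simple inputs changes neither property, although the polynomial's variables are then evaluated only on $\{0,1\}$-valued, loopless adjacency data. Second, the constant $c = c(H)$ can be taken to be the one furnished by Theorem~\ref{thm:ptf-hardness}. With both confirmed, we get a contradiction, so no such PTF exists for the multigraph distributions.

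The only genuine obstacle is a notational one: the polynomial's input space must be defined consistently for multigraphs and simple graphs, so that a symmetric PTF on multigraphs restricts to a symmetric PTF on simple graphs. The natural choice is to take the variables to be the adjacency-matrix entries, which may be integers for multigraphs. I would also confirm that Corollary~\ref{cor:simple-prob} really gives a lower bound uniform in $n$ for all four distributions with $\delta > 0$ fixed, including the noiseless part of the lift, which can create loops or parallel edges from half-loops or multi-edges of $H$.
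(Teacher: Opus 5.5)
Your proposal is correct and is essentially the paper's proof. You assume a symmetric degree-$D$ PTF strongly detects the multigraph distributions, divide by the probability of simplicity (bounded below by Corollary~\ref{cor:simple-prob}) to transfer strong detection to $\sQ^S,\sP^S$, and contradict Theorem~\ref{thm:ptf-hardness}. Your remark that the possibly non-multilinear polynomial restricts on simple graphs to a symmetric polynomial of degree at most $D$ matches the paper's parenthetical ``not necessarily multilinear.''
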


By Remark~\ref{rem:separation-implies-detection} and a standard Jensen's inequality argument, the failure of degree-$D$ symmetric PTFs also yields hardness for degree-$D$ polynomials.

\begin{corollary}\label{cor:low-deg-hardness}
    If no symmetric PTF of degree at most $D$ achieves strong detection between $\sQ^S$ and $\sP^S$ ($\sQ^{b,S}$ and $\sP^{b,S}$, respectively), then no polynomial of degree at most $D$ achieves strong separation between $\sQ^S$ and $\sP^S$ ($\sQ^{b,S}$ and $\sP^{b,S}$, respectively).
\end{corollary}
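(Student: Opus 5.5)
We prove the contrapositive: suppose some polynomial $f$ of degree at most $D$ strongly separates $\sQ^S$ and $\sP^S$; we build a degree-$D$ symmetric PTF achieving strong detection. Then the hypothesis of Corollary~\ref{cor:low-deg-hardness} is contradicted.

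\textbf{Reduction to symmetric $f$.} Both $\sQ^S$ and $\sP^S$ are invariant under relabelling of vertices. For $\sP^S$ this holds because the fiber partition is hidden by a uniform random relabelling. So we may replace $f$ by its symmetrization $\bar f \colonequals \frac{1}{n!}\sum_{\pi \in \Sym_n} f\circ \pi$, which is again a polynomial of degree at most $D$ and is symmetric. Since each $\sQ^S$, $\sP^S$ is permutation-invariant, $\EE_{\sQ^S}[\bar f] = \EE_{\sQ^S}[f]$ and $\EE_{\sP^S}[\bar f] = \EE_{\sP^S}[f]$.

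For the variance we use Jensen's inequality (convexity of $x \mapsto x^2$):
\begin{align*}
\EE_{\sQ^S}\big[(\bar f - \EE \bar f)^2\big] \le \frac{1}{n!}\sum_{\pi} \EE_{\sQ^S}\big[(f\circ\pi - \EE f)^2\big] = \Var_{\sQ^S}[f],
\end{align*}
and the same bound holds under $\sP^S$. So the gap between the means is unchanged and neither variance increases. Hence $\bar f$ still achieves strong separation.

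\textbf{Separation implies detection.} Write $\mu_Q = \EE_{\sQ^S}[\bar f]$ and $\mu_P = \EE_{\sP^S}[\bar f]$, and assume without loss of generality $\mu_P > \mu_Q$. Threshold at the midpoint: set $g \colonequals \bar f - \frac{\mu_P+\mu_Q}{2}$. This is a symmetric polynomial of degree at most $D$; subtracting a constant preserves symmetry and degree, and the constant case $D=0$ is trivial. The PTF $\One[g \ge 0]$ errs under $\sQ^S$ only if $\bar f - \mu_Q \ge \frac{\mu_P-\mu_Q}{2}$. By Chebyshev's inequality this has probability at most
\begin{align*}
\frac{4\Var_{\sQ^S}[\bar f]}{(\mu_P-\mu_Q)^2} = o(1)
\end{align*}
by strong separation. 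Symmetrically, the error under $\sP^S$ is $o(1)$.

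This gives a symmetric degree-$D$ PTF achieving strong detection, contradicting the hypothesis. If $\mu_P < \mu_Q$, use $-g$ instead. The bipartite case is identical, using invariance of $\sQ^{b,S}$ and $\sP^{b,S}$ under relabellings that preserve the vertex sides. The only point needing care is that invariance of the distributions is exactly what makes symmetrization preserve the means, and that symmetrized polynomials meet the paper's definition of symmetric PTF (Definition~\ref{def:sym-PTF}).
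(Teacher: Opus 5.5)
Your proof is correct and is essentially the paper's argument: average $f$ over $\Sym([n])$, use permutation invariance of $\sQ^S$ and $\sP^S$ to keep the means, apply Jensen so neither variance increases, then threshold at the midpoint of the means and apply Chebyshev. One small adjustment in the bipartite case: under the paper's uniform-relabelling convention both bipartite distributions are invariant under all of $\Sym([2n])$, and Definition~\ref{def:sym-PTF} requires invariance under all vertex permutations, so you should average over the full group as in the non-bipartite case. Averaging only over side-preserving relabellings need not produce a symmetric polynomial in that sense.
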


The proofs of Theorem~\ref{thm:ptf-hardness}, Corollary~\ref{cor:multigraph-hardness}, and Corollary~\ref{cor:low-deg-hardness} are deferred to Section~\ref{sec:proof-ptf-hardness}.

\subsection{Distribution of Short Cycle Counts in Noisy Random Lift}

One important ingredient in our proof of Theorem~\ref{thm:ptf-hardness} is the joint convergence of the distribution of short cycle counts to independent Poisson distributions under both $\sQ$ and $\sP$. Our second main result proves such a convergence statement, generalizing a classical result \cite{mckay2004short} by McKay, Wormald, and Wysocka for the random regular simple graph distribution, later improved in \cite{Johnson_2015} by Johnson, which shows the joint convergence to Poisson up to logarithmic lengths, and \cite{greenhill2010number} by Greenhill, Janson, and Ruci\'{n}ski and \cite{fortin2013asymptotic} by Fortin and Rudinsky for the case of random lift distribution, which show the joint convergence to Poisson up to constant lengths.

We will use the following definition of cycles. Let $G$ be a graph with potentially loops and parallel edges. A cycle of length $t$ in $G$ is an interleaving sequence of $t$ distinct vertices and $t$ distinct edges of $G$, $(v_1, e_1, v_2, e_2, \dots, v_{t}, e_{t})$, such that for all $i\in [t]$, the edge $e_i$ has endpoints $v_i$ and $v_{i+1}$ where we let $v_{t + 1} \colonequals v_1$. A cycle is determined up to a cyclic permutation and reversal. Note that a cycle of length $1$ is a loop, and a cycle of length $2$ is a pair of parallel edges.

To state the result, we will need the following notion of the non-backtracking matrix, sometimes called the Hashimoto operator \cite{hashimoto1989zeta, hashimoto1990zeta}, which has been widely studied in the analysis of spectra of random regular graphs.
\begin{definition}[Non-Backtracking Matrix]\label{def:nb-matrix}
    Fix $d \ge 3$. Suppose $H$ is a $d$-regular graph on $k$ vertices potentially with half-loops, and $M$ is the adjacency matrix of $H$. Then, the non-backtracking matrix $B$ of $H$ is a matrix indexed by the set of directed edges $\vec{E}(H)$ discussed in Remark~\ref{rem:directed-edges}, whose entries are given by
    \begin{align*}
        B_{\vec{e}_1,\vec{e}_2} = \One\{\tail(\vec{e}_2) = \head(\vec{e}_1), \vec{e}_2 \ne \iota(\vec{e}_1)\},
    \end{align*}
    where we recall $\iota: \vec{E}(H) \to \vec{E}(H)$ maps any directed edges to its reverse.
\end{definition}

Our second main result is the following convergence result for short cycles in noisy random lifts, the proof of which is deferred to Section~\ref{sec:proof-cycle-counts}.
\begin{theorem}[Distribution of Short Cycle Counts in Noisy Random Lift]\label{thm:cycle-counts}
    Fix $d \ge 3$.
    
    Suppose $H$ is a base $d$-regular graph on $k$ vertices potentially with half-loops. Let $B$ be the non-backtracking matrix of $H$. Let $m \in \NN$, $n \colonequals mk$, and $r \in \left\{0, 1, \dots, \frac{nd}{2}\right\}$. Let $Z_t$ denote the number of length-$t$ cycles in an $r$-noisy random lift of $H$ on $n$ vertices drawn from $\sP$. Then, there exists a constant $c = c(H) > 0$ such that uniformly for all $D \le c\log(n)$,
    \begin{align*}
        d_{\TV}\left(\sL(Z_1, Z_2, Z_3, \dots, Z_D),  \bigotimes_{t=1}^D \Pois(\mu_t)\right) = o(1), \quad \text{ as } n \to \infty,
    \end{align*}
    where $\mu_t \colonequals (1-\delta)^t \left(\nu_t - \frac{(d-1)^t}{2t}\right) + \frac{(d-1)^t}{2t}$, $\delta \colonequals \frac{2r}{nd}$, and $\nu_t \colonequals \frac{\tr(B^t)}{2t}$.

    Suppose $H$ is a base bipartite $d$-regular graph on $2k$ vertices with a given bipartition. Let $B$ be the non-backtracking matrix of $H$. Let $m \in \NN$, $n \colonequals mk$, and $r \in \{0, 1, \dots, nd\}$. Let $Z_t$ denote the number of length-$t$ cycles in an $r$-noisy bipartite random lift of $H$ on $2n$ vertices drawn from $\sP^b$. Then, there exists a constant $c = c(H) > 0$ such that uniformly for all $L \le c\log(n)$,
    \begin{align*}
        d_{\TV}\left(\sL(Z_2, Z_4, \dots, Z_{2L}),  \bigotimes_{t=1}^L \Pois(\mu_{2t}')\right) = o(1), \quad \text{ as } n \to \infty,
    \end{align*}
    where $\mu_{2t}' \colonequals (1-\delta)^{2t} \left(\nu_{2t} - \frac{(d-1)^{2t}}{2t}\right) + \frac{(d-1)^{2t}}{2t}$, $\delta \colonequals \frac{r}{nd}$, and $\nu_{2t} \colonequals \frac{\tr(B^{2t})}{4t}$.
\end{theorem}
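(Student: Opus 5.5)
We will prove Theorem~\ref{thm:cycle-counts} by the multivariate Chen--Stein method for Poisson process approximation (Arratia--Goldstein--Gordon / Barbour--Holst--Janson). We describe the non-bipartite case; the bipartite case is identical with the bipartite configuration structure and only even lengths.

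\textbf{Step 1: potential cycles and indicators.} Write $Z_t=\sum_{\alpha\in\Gamma_t} I_\alpha$. Here $\Gamma_t$ is the set of \emph{potential cycles} of length $t$: cyclic sequences of $t$ distinct vertices together with a choice of $2t$ stubs, two at each vertex, specifying the stub pairs forming the edges. Each stub pair in the final graph is of one of two kinds. Either it is an original lift edge that was retained, which requires the two stubs to have reversed types $(i,e)$ and $\iota(i,e)$. Or it is a rematched pair, which requires both stubs' original edges to have been deleted.

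\textbf{Step 2: means.} We compute $\EE[Z_t]$ by summing over which of the $t$ edges are retained and which are rematched. Consider a cycle whose edge set is split into maximal retained runs and rematched edges. Project it to $H$.
\begin{itemize}
\item A fully retained cycle projects to a closed non-backtracking walk in $H$. There are $\tr(B^t)$ such walks, and each has probability $\approx(1-\delta)^t m^{-t}$ times $m^t$ vertex choices, up to the factor $1/(2t)$ for rotation and reversal.
\item A cycle with at least one rematched edge ``forgets'' the type constraint across that edge. The rematched pair has probability $\approx \delta^2\cdot\frac{1}{2r}$ per pair, and the deletions contribute the appropriate powers.
\end{itemize}
Summing over patterns should telescope to $(1-\delta)^t\nu_t+\frac{(d-1)^t}{2t}(1-(1-\delta)^t)$. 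The second term arises because every walk containing a rematched step counts as unconstrained non-backtracking walks in the universal $d$-regular tree, weighted by $1-(1-\delta)^t$. This is a combinatorial identity we will verify by conditioning on the set of retained positions, with all error factors $(1+O(t^2/n))$. This is where $D\le c\log n$ enters: it keeps $t^2(d-1)^{t}/n=o(1)$-type errors, and the total mean $\sum_{t\le D}\mu_t=O((d-1)^D)$, polynomially small against $n$.

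\textbf{Step 3: Chen--Stein error terms.} The bound has the form $d_{\TV}\le C(b_1+b_2+b_3)$. We choose the dependency neighbourhood of $\alpha$ to be the potential cycles sharing a vertex with $\alpha$.
\begin{itemize}
\item $b_1=\sum_\alpha\sum_{\beta\sim\alpha}\EE I_\alpha\EE I_\beta$ is bounded by $O((d-1)^{2D}D^{2}/n)$.
\item $b_2=\sum_{\alpha}\sum_{\beta\sim\alpha,\beta\ne\alpha}\EE[I_\alpha I_\beta]$ counts pairs of overlapping cycles, i.e.\ bicycle-type structures with one more edge than vertices. These number $O(\mathrm{poly}(D)(d-1)^{2D}/n)$ in expectation. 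This needs a first-moment count over the union graph, uniformly in the retained/rematched labelling; we get it by bounding each stub pair's probability by $O(1/m)$ and using an excess-one counting argument.
\item $b_3=\sum_\alpha\EE|\EE[I_\alpha-\EE I_\alpha\mid \sigma(I_\beta:\beta\not\sim\alpha)]|$ is the term where the model is not a product measure, so it does not vanish automatically. We bound it via a coupling. Conditioned on $I_\alpha=1$, we construct the noisy lift from an unconditioned one by a switching: rerandomize only the $O(t)$ bijection values, deletions, and rematch pairs involving stubs of $\alpha$. This is done by composing each $\sigma_e$ with a transposition and swapping deletion status or rematch partners for a bounded number of edges. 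The coupled configurations then agree outside $O(t)$ edges. Any $\beta$ disjoint from $\alpha$ whose indicator changes must pass through a switched edge. The expected number of such $\beta$ is therefore $O(t\,D(d-1)^D/n)$, giving $b_3=O(\mathrm{poly}(D)(d-1)^{2D}/n)$.
\end{itemize}

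\textbf{Main obstacle.} The main obstacle is this coupling for $b_3$. The switching must act simultaneously on three layers of randomness: the fiber bijections, the uniform $r$-subset of deleted edges, and the uniform rematching of $\sR$. It must also preserve the exact laws, including the sizes $r$ and $2r$ and the half-loop perfect-matching constraint. We would first try to handle each layer sequentially, using the standard fact that a uniform bijection conditioned on $k$ prescribed values is obtained from an unconditioned one by at most $k$ transpositions, and similarly for uniform subsets and matchings.

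With $c$ chosen so that $(d-1)^{2cD}\mathrm{poly}(\log n)=o(n)$ and $\rho(B)$-dependent terms also controlled (note $\tr(B^t)\le 2|E(H)|(d-1)^t$), all three terms are $o(1)$ uniformly in $D\le c\log n$. Combined with the means being within $o(1)$ total variation of $\mu_t$, this yields the theorem.
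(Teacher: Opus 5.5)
Your architecture matches the paper's. You use the Arratia--Goldstein--Gordon bound, first-moment counts for $b_1,b_2$, a switching coupling through the three layers (lift, deletion set, rematching) for $b_3$, and a non-backtracking-walk evaluation of the means. Your pattern sum is what the paper does: the term $\frac{(d-1)^t}{2t}(1-(1-\delta)^t)$ comes from $B\mathbf{1}=(d-1)\mathbf{1}$, so each maximal run of $j$ retained edges contributes $kd(d-1)^j$ open walks.

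What differs is the index set. You index by final-graph potential cycles with vertex-sharing neighbourhoods. The paper indexes by \emph{local configurations}, which also record each cycle edge's type and, for retained edges, its colour. For each rematched edge they record the two deleted original edges $\{v_i,s_i\},\{v_{i+1},t_i\}$, and $N(\alpha)$ is the set of configurations sharing a stub, partner stubs included. Your choice makes $b_1,b_2$ plain bicycle counts in the final graph; a stub-pair version of Lemma~\ref{lem:subgraph-containment} suffices. It also removes the paper's extra $b_2$ case of cycle-disjoint configurations sharing a deleted edge. The paper's choice pays off in $b_3$. There $\{X_\alpha=1\}$ fixes a set of edges with prescribed statuses, so $p_\alpha$ has the product form of Fact~\ref{fact:config-prob} and the conditional law is uniform on the rest. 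Staged switchings therefore reproduce it exactly, and any $\beta$ touching a partner stub already lies in $N(\alpha)$.

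Those are the points your $b_3$ sketch leaves open, and as written it would not produce the right law.

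First, $\{I_\alpha=1\}$ is a union over histories with unequal weights. If rematched stubs of $\alpha$ were originally paired with each other, fewer deletions are needed, so such lifts are favoured by a factor of about $1/\delta$ each. For $r=1$ a rematched edge $\{x,y\}$ even forces $\{x,y\}$ to be an original edge. A switching from an unconditioned sample leaves the lift uniform apart from forced edges, so it does not give $\mathcal{L}(W\mid I_\alpha=1)$. You must first draw the history (the retained/rematched pattern and the original partners) from its conditional law, then couple to the fixed-history law. This effectively reintroduces the paper's local configurations inside $b_3$. With vertex-based neighbourhoods you must also charge to $b_3$ the cycles vertex-disjoint from $\alpha$ that pass through the partner stubs $s_i,t_i$ or through transported rematch edges. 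Their endpoints are near-uniform, so these should cost $O(\mathrm{poly}(D)K^D/n)$ per step and leave the final order unchanged.

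Second, ``similarly for uniform subsets and matchings'' overlooks a dependency between layers. Every swap of a deletion status changes the \emph{support} of the rematching, i.e.\ the set of exposed stubs. The paper's fix is the support switching: the rematch partners of stubs leaving the exposed set ($S$) are transported to the newly exposed stubs ($T$) by a uniform bijection $S\to T$. This keeps the rematching uniform by exchangeability.

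Finally, state the mean asymptotics with additive error as in \eqref{ineq:matching-prob-bound}, not as factors $(1+O(t^2/n))$. The theorem is uniform in $r$, and for small $r$ (e.g.\ $r<2h$) the main-term probability is exactly $0$ while $(2r/(dn))^h$ is not.
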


As a corollary, both noisy random lift and random regular graph have a constant probability being simple, since under both models the number of loops and the number of pairs of parallel edges ($Z_1$ and $Z_2$ in Theorem~\ref{thm:cycle-counts} resp.) jointly converge to a product of Poisson random variables of constant means.

\begin{corollary}\label{cor:simple-prob}
    Fix $d \ge 3$ and $\delta \in [0,1]$. 
    
    Suppose $H$ is a base $d$-regular graph on $k$ vertices potentially with half-loops. Then, there exists a constant $p = p(H) > 0$ such that 
    \begin{align*}
        \liminf_{n \to \infty}\Pr_{G \sim \sP_n(H, \delta)}\left(G \text{ is simple}\right) \ge p.
    \end{align*}

    Suppose $H$ is a base bipartite $d$-regular graph on $2k$ vertices with a given bipartition. Then, there exists a constant $p = p(H) > 0$ such that 
    \begin{align*}
        \liminf_{n \to \infty}\Pr_{G \sim \sP^b_n(H, \delta)}\left(G \text{ is simple}\right) \ge p.
    \end{align*}
\end{corollary}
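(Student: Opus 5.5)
The plan is to derive Corollary~\ref{cor:simple-prob} from Theorem~\ref{thm:cycle-counts}, applied with the fixed truncation $D = 2$ in the non-bipartite case and $L = 1$ in the bipartite case. These are admissible choices because $2 \le c\log(n)$ for all large $n$. A graph is simple exactly when it has no loops and no pair of parallel edges, that is, when $Z_1 = Z_2 = 0$. In the bipartite model no loops can arise at all, because every edge crosses the bipartition, so there simplicity is the single condition $Z_2 = 0$.

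In the non-bipartite case, take $r = \lceil \delta nd/2\rceil$. The effective noise parameter is $\delta_n = 2r/(nd)$, which satisfies $\delta_n \to \delta$ and $\delta_n \in [0,1]$. Theorem~\ref{thm:cycle-counts} holds for every $r$, with $\mu_t$ computed from $\delta_n$, so it gives
\begin{align*}
\Pr(Z_1 = 0, Z_2 = 0) = e^{-\mu_1 - \mu_2} + o(1).
\end{align*}
For $t \in \{1,2\}$ we have $\mu_t = (1-\delta_n)^t \nu_t + \bigl(1-(1-\delta_n)^t\bigr)\frac{(d-1)^t}{2t}$. Here $\nu_t = \tr(B^t)/(2t) \ge 0$, and $\tr(B^t) \le |\vec{E}(H)|\,(d-1)^t \le kd(d-1)^t$, since $B$ is a nonnegative matrix whose row sums are $d-1$ (a half-loop counts once in $\vec{E}(H)$). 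Consequently
\begin{align*}
0 \le \mu_t \le \max\Bigl\{\tfrac{kd(d-1)^t}{2t}, \tfrac{(d-1)^t}{2t}\Bigr\} = \tfrac{kd(d-1)^t}{2t},
\end{align*}
which does not depend on $n$ or $\delta$. Setting $p(H) \colonequals \exp\bigl(-\tfrac{kd(d-1)}{2} - \tfrac{kd(d-1)^2}{4}\bigr)$, taking the $\liminf$ gives $\liminf_n \Pr(G \text{ simple}) \ge p(H) > 0$.

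The bipartite case is identical. Apply the second part of Theorem~\ref{thm:cycle-counts} with $L = 1$ to get $\Pr(Z_2 = 0) = e^{-\mu_2'} + o(1)$. The same trace bound gives $\mu_2' \le |\vec{E}(H)|(d-1)^2/4 + (d-1)^2/2 \le 2kd(d-1)^2$, since $H$ has $2k$ vertices. Then $p(H) = e^{-2kd(d-1)^2}$ works.

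There is no real obstacle here. The only points to check are that the error in Theorem~\ref{thm:cycle-counts} is uniform over $r$ (it is stated for arbitrary $r$, with $\delta$ defined from $r$), so that the varying $\delta_n$ causes no trouble, and the uniform upper bound on $\mu_t$ via the row sums of $B$.
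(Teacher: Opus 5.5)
Your proposal is correct and takes the same route as the paper. The paper also deduces the corollary from Theorem~\ref{thm:cycle-counts}, observing that $Z_1$ and $Z_2$ (only $Z_2$ in the bipartite case) converge jointly to independent Poissons with constant means, so $\Pr(Z_1 = Z_2 = 0)$ stays bounded away from $0$. Your uniform bound $\mu_t \le kd(d-1)^t/(2t)$ from the row sums of $B$, and your handling of $\delta_n = 2r/(nd)$, fill in details the paper leaves implicit.
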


\subsection{Detecting Noisy Random Lift of Non-Ramanujan Graph is Easy}

To contrast the lower bound for the Ramanujan case, our next result shows that noisy random lift of non-Ramanujan base graph can be strongly detected with PTFs of barely non-constant degrees. We defer its proof to Section~\ref{sec:non-ramanujan}.
\begin{theorem}[PTF for Detecting Noisy Non-Ramanujan Lift] \label{thm:non-ramanujan}
    Fix $d \ge 3$.

    Suppose $H$ is a base $d$-regular non-Ramanujan graph on $k$ vertices potentially with half-loops. Then, there exists a constant $\Delta = \Delta(H) > 0$ such that for any $D = \omega(1)$, there exists a PTF of degree $D$ that achieves strong detection between $\sQ^S$ and $\sP^S = \sP^S_n(H, \delta)$ for any $\delta \le \Delta$.

    Suppose $H$ is a base bipartite $d$-regular non-Ramanujan graph on $2k$ vertices with a given bipartition. Then, there exists a constant $\Delta = \Delta(H) > 0$ such that for any $D = \omega(1)$, there exists a PTF of degree $D$ that achieves strong detection between $\sQ^{b,S}$ and $\sP^{b,S} = \sP^{b,S}_n(H, \delta)$ for any $\delta \le \Delta$.
\end{theorem}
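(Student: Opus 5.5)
We will prove Theorem~\ref{thm:non-ramanujan} by exhibiting an explicit polynomial in the cycle counts whose distribution separates the two models, using Theorem~\ref{thm:cycle-counts} for both the null $\sQ$ (the case $\delta=1$, where $\mu_t = \frac{(d-1)^t}{2t}$) and the planted $\sP$. The starting point is the spectral identity relating $\tr(B^t)$ to the eigenvalues of $M$ through the Ihara--Bass formula. Each eigenvalue $\lambda$ of $M$ gives two eigenvalues $\theta_\pm$ of $B$ solving $\theta^2-\lambda\theta+(d-1)=0$, and the remaining eigenvalues are $\pm1$. The trivial eigenvalue $\lambda=d$ contributes $\theta=d-1$ and $\theta=1$. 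If $H$ is non-Ramanujan, some nontrivial $\lambda$ has $|\lambda|>2\sqrt{d-1}$, so $B$ has a real eigenvalue $\theta$ with $\sqrt{d-1}<|\theta|<d-1$. In the bipartite case we take $|\lambda|\ne d$ and use even $t$. Then $\nu_t-\frac{(d-1)^t}{2t} = \frac{1}{2t}\bigl(\sum_{\theta\ne d-1}\theta^t\bigr)$ is dominated by the largest such $|\theta|^t$, up to $O((d-1)^{t/2}\mathrm{poly}(t))$ from the eigenvalues on the circle of radius $\sqrt{d-1}$ together with the $\pm1$ terms. Choose $t$ even, or along a subsequence that avoids cancellation among eigenvalues of maximal modulus, so that $\sum_{\theta\ne d-1}\theta^t \ge c\,\theta_*^t$ with $\theta_*>\sqrt{d-1}$.

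The test is built on $Z_t$ for a single well-chosen length $t=t(n)\to\infty$, or a linear combination over a window of lengths. We have $\mu_t^{\sP}-\mu_t^{\sQ}=(1-\delta)^t\bigl(\nu_t-\frac{(d-1)^t}{2t}\bigr)\approx (1-\delta)^t\theta_*^t/(2t)$, while the Poisson standard deviation is about $\sqrt{(d-1)^t/(2t)}$. Pick $\Delta$ so that $(1-\Delta)\theta_*>\sqrt{d-1}\cdot(1+\eta)$ for some $\eta>0$. Then the mean gap divided by the standard deviation grows like $\bigl((1-\delta)\theta_*/\sqrt{d-1}\bigr)^t/\sqrt t\to\infty$ as $t\to\infty$, for every $\delta\le\Delta$. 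Since $Z_t$ has degree $t$ in the edge indicators, we take $t\le D$ with $t\to\infty$ slowly, for instance $t=\min(D,\log\log n)$, rounded to an even value. This keeps $t\le c\log n$ so that Theorem~\ref{thm:cycle-counts} applies.

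The two laws $\Pois(\mu^{\sQ})$ and $\Pois(\mu^{\sP})$ with $\mu\to\infty$ and separated means concentrate by Chebyshev, so thresholding $Z_t$ at the midpoint $(\mu_t^{\sQ}+\mu_t^{\sP})/2$ separates them with error $o(1)$. Theorem~\ref{thm:cycle-counts} transfers this to the multigraph models at $o(1)$ cost. To pass to $\sQ^S,\sP^S$, note that conditioning on an event of probability bounded below (Corollary~\ref{cor:simple-prob}) inflates error probabilities by at most a constant factor, so they remain $o(1)$. The sign flip $f=Z_t-\text{threshold}$ has degree $t\le D$, and when $\theta_*$ is negative with odd $t$ we restrict to $t$ of the favorable parity. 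If $D$ must be exactly the degree, a polynomial of lower degree is also a degree-$\le D$ PTF.

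The main obstacle is the spectral lower bound in the first step. We need $\bigl|\sum_{\theta\ne d-1}\theta^t\bigr|\gtrsim\theta_*^t$ for infinitely many admissible $t$, and indeed for the specific $t(n)$ chosen. Several eigenvalues of equal maximal modulus could cancel; since they are all real, this can only happen between $+\theta_*$ and $-\theta_*$ at odd $t$. Taking $t$ even gives $2\theta_*^t$ or $\theta_*^t$ with no cancellation, and the complex eigenvalues on the radius-$\sqrt{d-1}$ circle are lower order. In the bipartite case, the eigenvalue $-d$ gives $\theta=-(d-1)$, which matches the null's $(d-1)^{2t}$ term and must be accounted for correctly in $\nu_{2t}$; this has to be checked against the normalization $\tr(B^{2t})/(4t)$.
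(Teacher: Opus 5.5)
Your proposal is correct and follows essentially the same route as the paper: Ihara--Bass gives a real eigenvalue $\theta_*$ of $B$ with $|\theta_*|>\sqrt{d-1}$, and one restricts to even cycle lengths and applies Theorem~\ref{thm:cycle-counts} to both $\sQ$ and $\sP$. One then thresholds the cycle count at the midpoint of the two Poisson means, which works by Chebyshev once $(1-\delta)|\theta_*|>\sqrt{d-1}$. Your version is slightly more careful than the paper's, which thresholds $Z_D$ itself for even $D$ (implicitly needing $D\le c\log n$) and asserts $\tr(B^t)\ge (d-1)^t+\theta_*^t$ without accounting for the complex eigenvalues of modulus $\sqrt{d-1}$; your choice $t=\min(D,\log\log n)$ rounded to even, your lower-order bound on those eigenvalues, and your explicit transfer to $\sQ^S,\sP^S$ via Corollary~\ref{cor:simple-prob} close these small gaps.
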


We also note that any symmetric PTF of constant degree cannot achieve strong detection, as explained in Remark~\ref{rem:necessity-of-nonconstant-deg}. Thus, Theorem~\ref{thm:non-ramanujan} is optimal in terms of the degree dependency.

\begin{remark}
    We state the result above to pin down the exact PTF complexity of detecting noisy random lift of non-Ramanujan graphs.

    In terms of computational complexity, it is already known in \cite{kunisky2024computational} that the noisy random lift of non-Ramanujan graphs can be strong distinguished in polynomial time, by solving the feasibility of the semidefinite programs coming from the degree-$(2, O(1))$ local statistic hierarchy.
\end{remark}

\subsection{Statistical Distinguishability of Noisy Random Lift}

Finally, we complement our average-complexity results with a statistical result, showing that intuitively, noisy random lifts is distinguishable from random regular graphs. The proof is deferred to Section~\ref{sec:statistical}.

\begin{theorem}[Statistical Distinguishability]\label{thm:statistical}
    Fix $d \ge 3$.

    Suppose $H$ is a base $d$-regular graph that is not a single vertex with $d$ half-loops. Then, there exists a constant $\Delta = \Delta(H) > 0$ such that for any $0 \le \delta \le \Delta$ and $\sP^S = \sP^S_n(H, \delta)$,
    \begin{align*}
        d_{\TV}(\sQ^S, \sP^S) = 1 - o(1), \quad \text{ as } n \to \infty.
    \end{align*}

    Suppose $H$ is a base bipartite $d$-regular graph that is not two vertices connected by $d$ parallel edges. Then, there exists a constant $\Delta = \Delta(H) > 0$ such that for any $0 \le \delta \le \Delta$ and $\sP^{b,S} = \sP^{b,S}_n(H, \delta)$,
    \begin{align*}
        d_{\TV}(\sQ^{b,S}, \sP^{b,S}) = 1 - o(1), \quad \text{ as } n \to \infty.
    \end{align*}

\end{theorem}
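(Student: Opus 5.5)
We reduce statistical distinguishability to a counting (first-moment) argument over the support of the planted distribution, via a likelihood-ratio / union bound. The basic principle: if there is an event $A_n$ with $\sP^S(A_n) = 1-o(1)$ and $\sQ^S(A_n) = o(1)$, then $d_{\TV}(\sQ^S,\sP^S) = 1-o(1)$. We take $A_n$ to be the event that the observed graph $G$ is within edit distance $r' \colonequals \lceil (\delta+\eta)\frac{nd}{2}\rceil$ of some (unlabelled) $m$-lift of $H$, for a small slack $\eta>0$. In the bipartite case we use $nd$ in place of $\frac{nd}{2}$.

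\textbf{Step 1: planted side.} Under $\sP$, $G$ is obtained from an $m$-lift by deleting and rematching exactly $r = \lceil\delta\frac{nd}{2}\rceil$ edges. Hence $G$ differs from a lift of $H$ in at most $r \le r'$ edges deterministically, and $\sP(A_n)=1$. Conditioning on simplicity preserves this, since by Corollary~\ref{cor:simple-prob} the conditioning event has probability bounded below.

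\textbf{Step 2: null side.} We bound $\sQ^S(A_n)$ by a union bound over labelled simple graphs in $A_n$, using that $\sQ^S$ is uniform on labelled simple $d$-regular graphs. The latter number is $N_{\mathrm{reg}} = \exp\left(\frac{nd}{2}\log n - \Theta(n)\right)$, more precisely $\frac{(nd)!}{(nd/2)!\,2^{nd/2}(d!)^n}\cdot\Theta(1)$.

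\begin{itemize}
\item The number of labelled $m$-lifts of $H$ is at most $\frac{n!}{(m!)^k}\cdot (m!)^{|E_2(H)|}\cdot\left(\frac{m!}{(m/2)!2^{m/2}}\right)^{|E_1(H)|}$. Writing $kd = 2|E_2(H)|+|E_1(H)|$, its logarithm is $n\log n\left(1-\frac{1}{k}\right) + \frac{n}{k}\left(|E_2|+\tfrac{1}{2}|E_1|\right)\log n + O(n)$, which equals $\left(1 - \frac1k + \frac d2\right) n\log n + O(n)$.
\item Each lift has at most $\binom{nd/2}{r'}\cdot (nd)^{2r'}$ graphs within edit distance $r'$: choose which edges to delete, then how to rematch the stubs. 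The logarithm of this is at most $(\delta+\eta) nd\log n + O(n)$.
\end{itemize}

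Comparing exponents, the union bound gives
\[
\sQ^S(A_n)\le \exp\left(n\log n\left(1 - \tfrac1k + (\delta+\eta) d\right) - n\log n\cdot 0 + O(n)\right)\Big/ \exp\left(\tfrac{nd}{2}\log n - \tfrac{nd}{2}\log n\right).
\]
This crude form is too lossy, so the comparison has to be done on the correct scale. Both $N_{\mathrm{reg}}$ and the lift count carry the leading term $\frac{nd}{2}\log n$. The difference is that the lift count has the extra term $(1-\frac1k)n\log n$ coming from the fiber assignment, whereas in a lift each of the $\frac{kd}{2}$ base-edge matchings is on $m = n/k$ vertices. Redoing the computation with $\log m! = m\log n - m\log k + O(m)$ gives the lift count as $\exp\left(n\log n + \frac{nd}{2}\log n - \frac{nd}{2}\cdot\log k\cdot c_H + O(n)\right)$. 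This is to be compared with $N_{\mathrm{reg}} = \exp\left(\frac{nd}{2}\log n - \frac{nd}{2}(1+\log d -\ldots) + O(n)\right)$.

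The key point is that the lift count is smaller than $N_{\mathrm{reg}}$ by a factor $\exp(-\Omega(n\log n))$ once the fiber-assignment term is accounted for. Heuristically, a uniform regular graph has entropy $\frac{nd}{2}\log n$, while a lift has entropy $n\log n + \frac{d}{2}\cdot\frac{n}{1}\log m$ restricted to stubs within designated fibers. I will carry out this exponent comparison carefully, keeping only $n\log n$ terms. The target inequality is
\[
\log\#\{\text{lifts}\} \le \left(\tfrac{d}{2}-\gamma_H\right)n\log n
\]
for some $\gamma_H>0$, and this is where the exclusion of the trivial base graph enters.

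\textbf{Main obstacle.} Naively the lift count matches $N_{\mathrm{reg}}$ at order $n\log n$, since every bijection $\sigma_e$ contributes $m\log m$. The gain must therefore come from overcounting. I expect to replace the raw count by a count of unlabelled objects: divide by automorphisms, or note that each lift is counted with multiplicity from relabellings within fibers. Alternatively, I would switch to a finer statistic that separates the two models at order $n$ rather than $n\log n$.

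Concretely, I would first try the exact exponent-of-$n$ computation. If the $n\log n$ terms cancel, I would fall back to an $O(n)$-order comparison, showing $\log\#\{\text{lifts}\} \le \log N_{\mathrm{reg}} - c_H n$ with $c_H>0$ whenever $H$ is not the one-vertex bouquet. This uses Stirling to second order: the constraint that each vertex's $d$ stubs go to prescribed fibers costs a factor like $k^{-d/2}$ per vertex relative to free matching, which is offset by $k^{1}$ from choosing fibers, giving a net $k^{1-d/2}<1$ when $d\ge3$ and $k\ge2$. The edit-distance slack costs $\exp(O((\delta+\eta)n\log(1/(\delta+\eta))))$ once deletions are counted relative to $N_{\mathrm{reg}}$ rather than absolutely. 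Choosing $\Delta$ small enough that this is below $e^{c_H n/2}$ then yields $\sQ^S(A_n)=e^{-\Omega(n)}$. The bipartite case is identical, with the bipartite configuration count in place of $N_{\mathrm{reg}}$ and net factor $k^{1-d/2}\cdot$ const excluding the two-vertex base.
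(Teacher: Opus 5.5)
Your framework is the paper's: bound the $\sQ^S$-mass of (a superset of) the support of $\sP^S$ against the uniform measure on simple $d$-regular graphs, with the gain appearing only at order $n$ and a small $\delta$ absorbing the noise. However, the decisive order-$n$ comparison is never established, and the counts you write down do not give it. Your first exponents are also wrong: $\log\frac{n!}{(m!)^k}=n\log k+O(\log n)$, and there is no $e^{-\Omega(n\log n)}$ gap.

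\textbf{The lift count is too lossy.} Your bijection-based count, compared with $N_{\mathrm{reg}}$, has per-vertex ratio $k^{1-d/2}\,d!/d^{d/2}$ whatever the structure of $H$. This exceeds $1$, for instance, when $H$ has two vertices, each with four half-loops, joined by four parallel edges: then $d=8$, $k=2$, and the ratio is $2^{-3}\cdot 8!/8^4\approx 1.23$. The cause is that ordered tuples of perfect matchings and bijections overcount graphs exponentially. The paper avoids this by counting labelled-stub configurations in which:
\begin{itemize}
\item a vertex of $V_i$ only assigns its $d$ stubs to target fibers, in $\binom{d}{M_{i,1},\dots,M_{i,k}}$ ways;
\item the stubs between fibers $i$ and $j$ are paired by a single matching, in $(mM_{i,j})!$ ways, resp.\ $(mM_{i,i}-1)!!$ ways when $i=j$;
\item the total is then divided by $(d!)^n$.
\end{itemize}

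\textbf{Your ``net $k^{1-d/2}$'' heuristic drops the stub-assignment entropy.} The correct count is $(nd-1)!!\,n^{O(1)}\prod_i\bigl[k^{1-d/2}\binom{d}{M_{i,1},\dots,M_{i,k}}\prod_j (M_{i,j}/d)^{M_{i,j}/2}\bigr]^{n/k}$. The middle factor is generally larger than $1$; for example it equals $d!/d^{d/2}$ when $H$ is simple. Proving that the bracket is strictly below $1$ is the heart of the argument. The paper writes $\binom{d}{M_{i,\cdot}}\prod_j (M_{i,j}/d)^{M_{i,j}/2}$ as the geometric mean of two quantities:
\begin{itemize}
\item $\binom{d}{M_{i,\cdot}}\le 2k^{d-2}$;
\item the multinomial point probability $\binom{d}{M_{i,\cdot}}\prod_j (M_{i,j}/d)^{M_{i,j}}<\frac12$.
\end{itemize}
This gives a bracket of at most $1$, with equality exactly for the excluded bouquet.

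\textbf{The noise term, as you set it up, destroys the gain.} Your per-lift neighbourhood bound $\binom{nd/2}{r'}(nd)^{2r'}$ is $e^{\Theta(n\log n)}$. Multiplied by the number of lifts, this is far larger than $N_{\mathrm{reg}}$, so the union bound in Step~2 fails. The cost $e^{O(\delta\log(1/\delta)n)}$ that you assert needs a mechanism. The paper counts only the \emph{retained} part of the lift:
\begin{itemize}
\item choose the $2r$ exposed stubs, in $\binom{dn}{2r}$ ways;
\item rematch them, in $(2r-1)!!\le (2r)^r$ ways;
\item pair the remaining stubs compatibly with a lift, which is smaller than the full lift count by a factor $(2k/n)^r$.
\end{itemize}
The $n^r$ from rematching then cancels, leaving a net factor $(O(dk)/\delta)^r=e^{O(\delta\log(1/\delta)n)}$. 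This is negligible against the $e^{-\Omega(n)}$ gain once $\delta$ is small. The slack $\eta$ and the edit-distance event are unnecessary: bounding $|\mathrm{supp}(\sP^S)|$ directly against the uniform $\sQ^S$ suffices.
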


\subsection{Proof Overview of the PTF Hardness Result}

To establish the PTF hardness for detecting noisy random lift of Ramanujan graphs, we first show that for any symmetric polynomial $f$ on simple regular graphs, there exists another polynomial $g$ that can be written as a linear combination of subgraph count polynomials where the subgraphs being count have minimum degree at least $2$. This is achieved by iteratively using the degree regularity constraint to peel off degree $1$ vertices from the subgraph used in the count polynomials, as explained in Lemma~\ref{lem:leaf-peeling}.

Then, we establish the standard fact that random regular graphs and random lifts are unlikely to have two short cycles that are not vertex disjoint in Lemma~\ref{lem:rare-event}. This fact and the previous argument that we can equivalently work with linear combinations of subgraph count polynomials whose minimum degree is at least $2$ allows us to deduce in Lemma~\ref{lem:function-of-cycles} that with high probability, symmetric low-degree PTF is a function of the short cycle counts.

Therefore, to show the failure of low-degree symmetric PTF, it is enough to establish that the total variation distance between the distributions of short cycle counts under both the random regular graph distribution and the noisy random lift distribution is strictly bounded away from $1$. We achieve this by showing the joint convergence of the short cycle counts to a product of independent Poisson distributions with specific means in Theorem~\ref{thm:cycle-counts} using the Chen-Stein method for proving Poisson convergence, which is the most technical proof of this work. Having established the Poisson convergence, the PTF hardness follows as a straightforward consequence.

\section{Tools for Noisy Random Lift Model}

In this section, we prove some preliminary facts about noisy random lifts.

\begin{lemma}\label{lem:matching-prob}
    Consider a set $U$ of $m$ vertices and suppose $m$ is even. Consider a collection $\sI \subseteq \binom{U}{2}$ of $e$ disjoint edges on $U$ with $e = o(m)$. Then, the probability that all the edges in $\sI$ are paired by a uniformly random perfect matching $\sigma: U \to U$  is at most
    \begin{align*}
        \Pr(\sI) \le \left(\frac{1 + o(1)}{m}\right)^e.
    \end{align*}

    Similarly, consider two disjoint sets $U$ and $V$ of equal size, each having $m$ vertices. Consider a collection $\sI \subseteq U \times V$ of $e$ disjoint edges with $e = o(m)$. Then, the probability that all the edges in $\sI$ are paired by a uniformly random bijection $\sigma: U \to V$ is at most
    \begin{align*}
        \Pr(\sI) \le \left(\frac{1 + o(1)}{m}\right)^e.
    \end{align*}
\end{lemma}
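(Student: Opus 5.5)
We compute the probability exactly by counting, and then bound the resulting product term by term. Begin with the perfect matching case. The number of perfect matchings of a set of $m$ points, with $m$ even, is $(m-1)!! = (m-1)(m-3)\cdots 1$. The event that all $e$ disjoint edges of $\sI$ are paired means those $2e$ vertices are matched as prescribed. The remaining $m-2e$ vertices, an even number, are then matched arbitrarily, in $(m-2e-1)!!$ ways. Hence
\begin{align*}
    \Pr(\sI) = \frac{(m-2e-1)!!}{(m-1)!!} = \prod_{j=0}^{e-1} \frac{1}{m-1-2j}.
\end{align*}
Each factor is at most $\frac{1}{m-2e+1} \le \frac{1}{m-2e}$. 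Since $e = o(m)$, we have $m - 2e = m(1-o(1))$, so each factor is at most $\frac{1+o(1)}{m}$. Here the $o(1)$ is uniform over all $j$, since it depends only on $e/m$. Taking the product of $e$ such factors gives $\Pr(\sI) \le \left(\frac{1+o(1)}{m}\right)^e$.

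The bijection case is analogous. There are $m!$ bijections $\sigma: U \to V$. Because the edges of $\sI$ are disjoint, they fix $\sigma$ on $e$ distinct points of $U$, with distinct images in $V$. The remaining $m-e$ points can be mapped bijectively in $(m-e)!$ ways. Thus
\begin{align*}
    \Pr(\sI) = \frac{(m-e)!}{m!} = \prod_{j=0}^{e-1}\frac{1}{m-j} \le \left(\frac{1}{m-e}\right)^e = \left(\frac{1+o(1)}{m}\right)^e,
\end{align*}
again using $e = o(m)$.

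There is no real obstacle here. The only point that needs care is that the $(1+o(1))$ factor is applied per edge, rather than as $(1+o(1))^e$ being $1+o(1)$ overall. This is what the statement asserts, and it follows because each individual factor is bounded by $\frac{1}{m(1-2e/m)}$ with $2e/m \to 0$.
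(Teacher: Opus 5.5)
Your proposal is correct and follows the paper's proof: you compute $\Pr(\sI)$ exactly as $\frac{(m-2e-1)!!}{(m-1)!!}$ in the matching case and as $\frac{(m-e)!}{m!}$ in the bijection case, then bound each of the $e$ factors by $\frac{1+o(1)}{m}$ using $e = o(m)$. Your explicit product expansions and the remark that the $o(1)$ depends only on $e/m$ (so it is uniform across factors) simply fill in a step the paper leaves implicit.
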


\begin{proof}
    Since $|\sI| = e = o(m)$, the probability that $\sI \subseteq \binom{U}{2}$ are paired by a perfect matching $\sigma: U \to U$ chosen uniformly at random is
    \begin{align*}
        \Pr(\sI) = \frac{(m-2e-1)!!}{(m-1)!!} \le \left(\frac{1 + o(1)}{m}\right)^e.
    \end{align*}
    Similarly, since $|\sI| = e = o(m)$, the probability that $\sI \subseteq U \times V$ are paired by a bijection $\sigma: U \to V$ chosen uniformly at random is
    \begin{align*}
        \Pr(\sI) = \frac{(m-e)!}{m!} \le \left(\frac{1 + o(1)}{m}\right)^e.
    \end{align*} 
\end{proof}

\begin{lemma}\label{lem:subgraph-containment}
    Fix $d \ge 3$ and $\delta \in [0,1]$.

    Suppose $H$ is a base $d$-regular graph on vertex set $[k]$ potentially with half-loops. Then, there exists a constant $K = K(H) > 0$ such that, for any simple graph $F \subseteq \binom{[n]}{2}$ without isolated vertex with $|F| = o(n)$, and $\sP = \sP_n(H, \delta)$,
    \begin{align*}
        \Pr_{G \sim \sP}(F \subseteq G) \le \left(\frac{K}{n}\right)^{|F|}.
    \end{align*}

    Suppose $H$ is a base bipartite $d$-regular graph on vertex set $[2k]$ with a given bipartition. Then, there exists a constant $K = K(H) > 0$ such that, for any simple graph $F \subseteq \binom{[2n]}{2}$ without isolated vertex with $|F| = o(n)$, and $\sP^b = \sP^b_{n}(H, \delta)$,
    \begin{align*}
        \Pr_{G \sim \sP^b}(F \subseteq G) \le \left(\frac{K}{n}\right)^{|F|}.
    \end{align*}
\end{lemma}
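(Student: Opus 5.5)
Conditioning on the hidden randomness and then using the product structure of Lemma~\ref{lem:matching-prob} should give the bound. The vertex labels of $G \sim \sP$ are a uniformly random relabelling, so we may condition on the fiber assignment $\pi: [n] \to [k]$, a uniformly random equitable partition. We also condition on which $r$ edges are deleted, which determines the set of exposed stubs $\sR$. Every edge $\{u,v\} \in F$ present in $G$ is then either \emph{original-retained}, meaning it is realized by some stub pair of types $(\pi(u),e)$ and $(\pi(v),e)$ in a lift bijection or matching $\sigma_e$ and was not deleted, or \emph{rematched}, meaning it is realized by a pair of exposed stubs under $\sigma_{\sR}$. We union bound over the label $\lambda: F \to \{\mathrm{orig}\} \cup \{\mathrm{rem}\}$. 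For original edges we also union bound over the choice of base edge $e \in E(H)$ and orientation realizing it. For rematched edges we union bound over the choice of stubs at $u$ and $v$. Since each vertex has $d$ stubs, this gives at most $(C_d)^{|F|}$ choices in total for a constant $C_d$ depending only on $d$ (and $H$).

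Fix such a choice. The original-labelled edges assigned to a given base edge $e$ form a set of disjoint pairs. They are disjoint because they use specific stubs of type $(i,e)$ and each vertex has exactly one stub of each type. So they are a matching inside $V_i$ or between $V_i$ and $V_j$. Moreover, the $\sigma_e$ are independent across $e$. By Lemma~\ref{lem:matching-prob}, since $|F| = o(n) = o(m)$, the probability that all original-labelled edges are present in the lift is at most $((1+o(1))/m)^{a} = ((1+o(1))k/n)^a$, where $a$ is the number of original-labelled edges. If the fiber types are incompatible with $e$, the probability is simply $0$.

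For the rematched edges, the specified stub pairs are disjoint. Conditioned on the lift and on the deleted set, and given that these $2b$ specified stubs all lie in $\sR$ (where $b$ is the number of rematched edges), the probability that $\sigma_{\sR}$ pairs them as required is $(2r-2b-1)!!/(2r-1)!! \le ((1+o(1))/(2r))^b$. Here I would use that the first part of Lemma~\ref{lem:matching-prob} applies to matchings on $2r$ stubs. The event that the $2b$ specified stubs lie in $\sR$ means that certain at most $2b$ lift edges are deleted. Those edges are determined by the lift, and this event has probability at most $(r/(nd/2))^{b'}$ for the number $b' \ge b$ of distinct edges involved, via a hypergeometric bound $\binom{N-b'}{r-b'}/\binom{N}{r} \le (r/N)^{b'}$ with $N = nd/2$. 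Combining the two factors gives at most $((1+o(1))/(2r))^b (2r/(nd))^b = ((1+o(1))/(nd))^b$.

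The main obstacle is to combine the two kinds of edges jointly without double counting, and with the correct conditioning order. We first expose the lift, then the deletion, then $\sigma_{\sR}$. For the original-retained edges I will drop the retention requirement, which only enlarges the event. For the rematched edges the hypergeometric bound must hold uniformly given the lift, which it does. The regime of small $r$, in particular $r < b$, gives probability $0$, so it is harmless. Multiplying all the factors and summing over the at most $C^{|F|}$ labelings gives $\Pr(F \subseteq G) \le (K/n)^{|F|}$ with $K = 2Ck$. The bipartite case is identical, using the second part of Lemma~\ref{lem:matching-prob} both for the $\sigma_e$ and for the bijection $\sigma_{\sR}: \sR_1 \to \sR_2$.
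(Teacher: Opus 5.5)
Your proposal is correct and follows essentially the paper's route. You union bound over which edges of $F$ are lift-retained versus rematched and over the colors or stubs, drop the retention requirement, bound the lift part by Lemma~\ref{lem:matching-prob}, and bound the rematched part by the hypergeometric deletion probability times the rematching probability, uniformly in the lift.

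One difference is minor but worth noting. Your estimate $(2r-2b-1)!!/(2r-1)!!\le((1+o(1))/(2r))^b$ needs $b=o(r)$, which holds here only because $\delta$ is fixed, so $r$ is either $0$ or of order $n$; the paper instead bounds each ratio $\frac{r-j}{2r-2j-1}\le 1$ and gets a bound uniform in $r$.
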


\begin{proof}
    Consider a $d$-regular graph $G \sim \sP = \sP_n(H,\delta)$, and the underlying realization of noisy random lift that produces $G$. Denote $r \colonequals \lceil \delta \frac{nd}{2} \rceil$. Under $\sP$, a total of $\frac{nd}{2}$ edges are first formed using the noiseless random lift model, which are naturally colored by the edges $E(H)$ of the base graph. Then, a uniformly random $r$-subset of the formed edges are deleted, and finally the $2r$ stubs exposed from edge deletion are paired using the configuration model to produce a $d$-regular graph. Thus, the total of $\frac{nd}{2}$ edges for $G$ are naturally split into two kinds of edges, the collection of lift-edges $\mathcal{O}$ coming from the noiseless random lift that are not deleted, and the collection of noise-edges $\mathcal{R}$ coming from the rematching process after the deletion. Let $G_{\mathcal{O}}$ denote the subgraph of $G$ realized by the lift-edges $\mathcal{O}$ and $G_{\mathcal{R}}$ denote the subgraph of $G$ realized by the noise-edges $\mathcal{R}$. Then, $G = G_{\mathcal{O}} \sqcup G_{\mathcal{R} }$ as a disjoint union of multiset. By a union bound, we have
    \begin{align*}
        \Pr_{G \sim \sP}(F \subseteq G) &\le \sum_{A \subseteq F} \Pr_{G \sim \sP}\left(A \subseteq G_{\mathcal{O}}, \text{ and } F \setminus A \subseteq G_{\mathcal{R} }\right)\\
        &= \sum_{A \subseteq F} \Pr_{G \sim \sP}\left(A \subseteq G_{\mathcal{O}}\right)\Pr_{G \sim \sP}\left(F \setminus A \subseteq G_{\mathcal{R}} \vert A \subseteq G_{\mathcal{O}}\right).
    \end{align*}
    First, we analyze the probability that $A \subseteq G_{\mathcal{O}}$. The probability for $A$ to be a subgraph of the noiseless random lift, even before the random deletion, can be upper bounded by
    \begin{align*}
        \Pr_{G \sim \sP}\left(A \subseteq G_{\mathcal{O}}\right) &\le d^{|A|} \cdot \left(\frac{k + o(1)}{n}\right)^{|A|},
    \end{align*}
    where $d^{|A|}$ comes from a union bound over all color choices $e \in E(H)$ for each edge of $A$, and $((k+o(1))/n)^{|A|}$ comes from applying Lemma~\ref{lem:matching-prob} to the random bijection/perfect matching colored by the edges $E(H)$ and using that $|A| \le |F| = o(n)$.

    Next, we analyze the conditional probability that $F \setminus A \subseteq G_{\mathcal{R}}$ conditioned on $A \subseteq G_{\mathcal{O}}$. Let $G'$ be the noiseless random lift before the random deletion. Conditioned on $A \subseteq G_{\mathcal{O}} \subseteq G'$, there exists at least one edge in $G'$ between every $\{i,j\} \in A$. In order for $F \setminus A$ to be a subgraph of $G_{\mathcal{R}}$, there should be at least one edge in $G_{\mathcal{R}}$ between every edge $\{i,j\} \in F \setminus A$, and moreover these edges are rematched in $G_{\mathcal{R}}$ after the random deletion process is applied to $G'$. 
    
    There are at most $d^{2|F \setminus A|}$ choices of the stub-pairs that realize $F \setminus A$. For any fixed choice of the $|F \setminus A|$ stub-pairs that realize $F \setminus A$, there are $x = x(G')$ edges in $G'$ incident to the chosen stubs of $F \setminus A$ that must be deleted in the random deletion process in order to expose the stubs and make it possible for the chosen stub-pairs to be rematched and appear in $G_{\mathcal{R}}$. Notice that $|F \setminus A| \le x \le 2|F \setminus A|$. Recall $r = \lceil \delta \frac{nd}{2} \rceil$ is the number of deleted edges in the random deletion process. If $x > r$, the probability that chosen $|F \setminus A|$ stub-pairs are paired in $G_{\mathcal{R}}$ is $0$. Otherwise, the probability that the chosen $|F \setminus A|$ stub-pairs are paired in $G_{\mathcal{R}}$ is at most
    \begin{align*}
        &\frac{\binom{\frac{dn}{2}-x}{r-x}}{\binom{\frac{dn}{2}}{r}}\cdot \frac{(2r - 2|F \setminus A| -1)!!}{(2r-1)!!}\\
        &\le \frac{\binom{\frac{dn}{2}-|F \setminus A|}{r-|F \setminus A|}}{\binom{\frac{dn}{2}}{r}}\cdot \frac{(2r - 2|F \setminus A| -1)!!}{(2r-1)!!} \quad && (\text{Using }|F \setminus A| \le x)\\
        &\le \frac{\left(\frac{dn}{2} - |F \setminus A|\right)!}{\left(\frac{dn}{2}\right)!} \cdot \frac{r(r-1)\dots (r -|F \setminus A| +1)}{(2r-1)(2r-3) \dots (2r - 2|F \setminus A| + 1)}\\
        &\le \left(\frac{2+o(1)}{dn}\right)^{|F \setminus A|}, \quad && (\text{Using }|F \setminus A| \le |F| = o(n))
    \end{align*}
    where $\frac{\binom{\frac{dn}{2}-x}{r-x}}{\binom{\frac{dn}{2}}{r}}$ accounts for the deletion probability for the $x$ edges incident to the chosen $|F \setminus A|$ stub-pairs, and $\frac{(2r - 2|F \setminus A| -1)!!}{(2r-1)!!}$ is the probability for the chosen $|F \setminus A|$ stub-pairs that realize $F \setminus A$ to be rematched in $G_{\mathcal{R}}$.
    
    By a union bound over at most $d^{2|F \setminus A|}$ choices of the stub-pairs that realize $F \setminus A$, we have
    \begin{align*}
        &\Pr_{G \sim \sP}\left(F \setminus A \subseteq G_{\mathcal{R}} \vert A \subseteq G_{\mathcal{O}}\right)\\
        &\le d^{2|F \setminus A|} \left(\frac{2+o(1)}{dn}\right)^{|F \setminus A|}\\
        &\le \left(\frac{2d+o(1)}{n}\right)^{|F \setminus A|}.
    \end{align*}

    Combining the bound for $\Pr(A \subseteq G_{\mathcal{O}})$ and $\Pr(F \setminus A \subseteq G_{\mathcal{R}} \vert A \subseteq G_{\mathcal{O}})$, we get
    \begin{align*}
        \Pr_{G \sim \sP}(F \subseteq G) &\le \sum_{A \subseteq F} \Pr_{G \sim \sP}\left(A \subseteq G_{\mathcal{O}}\right)\Pr_{G \sim \sP}\left(F \setminus A \subseteq G_{\mathcal{R}} \vert A \subseteq G_{\mathcal{O}}\right)\\
        &\le \sum_{A \subseteq F} d^{|A|} \cdot \left(\frac{k + o(1)}{n}\right)^{|A|} \left(\frac{2d+o(1)}{n}\right)^{|F \setminus A|}\\
        &\le \left(\frac{2d(k+2d+o(1))}{n}\right)^{|F|},
    \end{align*}
    proving the claim.

    The proof for the bipartite case is essentially identical, with small adjustments for the bipartite configuration models. We omit the details here.
\end{proof}

\section{Proof of PTF Hardness}\label{sec:proof-ptf-hardness}

In this section, we prove Theorem~\ref{thm:ptf-hardness} using Theorem~\ref{thm:cycle-counts}, and deduce Corollary~\ref{cor:multigraph-hardness} and Corollary~\ref{cor:low-deg-hardness}. First, we review the space of polynomials on simple regular graphs that we work with.

\subsection{Polynomials on Regular Simple Graph}
Fix $d \ge 3$. A polynomial on a $d$-regular simple graph $G = ([n],E)$ is a polynomial in the entries of the adjacency matrix $Y \in \{0,1\}^{n \times n}$ of $G$. Since $G$ is $d$-regular and simple, the entries of $Y$ satisfy the following set $\sS$ of constraints:
\begin{align*}
    \left.\begin{aligned}
        Y_{i,i} &= 0, \hspace{1cm} && \text{ for all } i\in [n], \\
    Y_{i,j} &= Y_{j,i}, \hspace{1cm} && \text{ for all } i,j\in [n],\quad\\
    Y_{i,j}^2 &= Y_{i,j}, \hspace{1cm} && \text{ for all } i,j\in [n],\\
    \sum_{j \in [n]} Y_{i,j} &= d, \hspace{1cm} && \text{ for all } i\in [n].
    \end{aligned}\right\} \sS
\end{align*}
Thus, reducing by these constraints, it is sufficient to consider the space of multilinear polynomials in the entries $\{Y_{i,j}: \{i,j\} \in \binom{[n]}{2}\}$. Let $\RR[Y]_{\le D}$ denote the space of multilinear polynomials in $Y$ with degree at most $D$. One convenient basis for working with $\RR[Y]_{\le D}$ is the monomial basis indexed by subgraphs of the complete graph $K_n$ without isolated vertex, each of which is specified by its edge set $A \subseteq \binom{[n]}{2}$.

\begin{fact}
    For $A \subseteq \binom{[n]}{2}$, set $Y^A \colonequals \prod_{\{i,j\} \in A} Y_{i,j}$. Then, the collection $\{Y^A: A \subseteq \binom{[n]}{2}, |A| \le D\}$ is a basis of polynomials for $\RR[Y]_{\le D}$.
\end{fact}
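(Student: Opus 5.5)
We must show two things: the monomials $Y^A$ with $|A|\le D$ span $\RR[Y]_{\le D}$, and they are linearly independent. Here $\RR[Y]_{\le D}$ denotes the multilinear polynomials of degree at most $D$ in the variables $\{Y_{i,j}:\{i,j\}\in\binom{[n]}{2}\}$, identified with the quotient described above.

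\emph{Spanning.} Take an arbitrary polynomial of degree at most $D$ in the entries of $Y$. Use the constraints $Y_{i,i}=0$ to delete every monomial containing a diagonal variable. Use $Y_{i,j}=Y_{j,i}$ to rewrite each remaining variable as $Y_{\min(i,j),\max(i,j)}$. Finally, use $Y_{i,j}^2=Y_{i,j}$ to replace every power $Y_{i,j}^a$ with $a\ge 1$ by $Y_{i,j}$. None of these reductions increases degree. Each resulting monomial is a product of distinct variables indexed by a set $A\subseteq\binom{[n]}{2}$ with $|A|\le D$, which is exactly $Y^A$. The edge set $A$ determines a subgraph of $K_n$ once isolated vertices are discarded, which gives the indexing by subgraphs without isolated vertices.

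\emph{Independence.} Distinct subsets $A$ give distinct squarefree monomials. Distinct monomials are linearly independent in the polynomial ring on the variables $\{Y_{i,j}:i<j\}$, hence in the space of multilinear polynomials. If one prefers to view these as functions on $\{0,1\}^{\binom{[n]}{2}}$, use Möbius inversion instead: evaluating $\sum_A c_A Y^A$ at the indicator vector $\One_B$ gives $\sum_{A\subseteq B} c_A$. If this vanishes for every $B$, then inclusion–exclusion yields $c_B=0$ for all $B$.

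The degree-sum constraints $\sum_j Y_{i,j}=d$ are deliberately not used. They impose further linear relations only when the $Y^A$ are viewed as functions on the set of $d$-regular graphs, and the Fact concerns the multilinear polynomial space itself. Those additional relations are exactly what the paper exploits later in the leaf-peeling step (Lemma~\ref{lem:leaf-peeling}). The only step needing any care is this distinction between the polynomial space and functions on the constrained set; otherwise the argument is routine.
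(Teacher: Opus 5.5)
Your proof is correct and follows the paper's own reasoning. The paper states this as a fact without proof, right after observing that reducing by $Y_{i,i}=0$, $Y_{i,j}=Y_{j,i}$, and $Y_{i,j}^2=Y_{i,j}$ leaves multilinear polynomials in $\{Y_{i,j}\}_{i<j}$, whose squarefree monomials of degree at most $D$ form a basis because distinct monomials are independent. Your caveat about not using $\sum_j Y_{i,j}=d$ is also right: for example, $\sum_{j\neq 1} Y^{\{\{1,j\}\}} - d\,Y^{\emptyset}$ vanishes on every $d$-regular simple graph, so independence would fail modulo the full $\sS$, and the paper indeed applies those constraints separately in Lemma~\ref{lem:leaf-peeling}.
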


For symmetric PTFs, we will work with the following subspace of symmetric polynomials.

\begin{definition}[Symmetric Polynomials and Symmetric PTFs]\label{def:sym-PTF}
    A polynomial $f \in \RR[Y]_{\le D}$ is symmetric if for any permutation $\pi \in \Sym([n])$, $f(Y)$ is invariant under permuting vertex labels by $\pi$, i.e.,
    \begin{align*}
        f(Y) = f(\pi \cdot Y),
    \end{align*}
    where $\pi \cdot Y$ is the natural action of $\pi$ on $Y$ given by $(\pi \cdot Y)_{i,j} = Y_{\pi(i), \pi(j)}$. 

    The subspace of symmetric polynomials is denoted as $\RR_{\sym}[Y]_{\le D} \subseteq \RR[Y]_{\le D}$. We call a PTF symmetric if its underlying polynomial is symmetric.
\end{definition}

\begin{fact}
    The space of symmetric polynomials $\RR_{\sym}[Y]_{\le D}$ is spanned by the following polynomials:
    \begin{align*}
        \left\{N_{\alpha}(Y) \colonequals \sum_{A \in \Emb(\alpha)} Y^A\,:\, \alpha \in \sG_{\le D} \right\},
    \end{align*}
    where $\sG_{\le D}$ denotes the collection of unlabelled simple graphs with at most $D$ edges without isolated vertex, and $\Emb(\alpha)$ denotes the collection of subgraphs of the complete graph $K_n$ isomorphic to $\alpha$.
\end{fact}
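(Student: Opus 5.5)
The statement to prove is the Fact: the orbit sums $N_\alpha$, $\alpha \in \sG_{\le D}$, span $\RR_{\sym}[Y]_{\le D}$. I would prove it by averaging over the symmetric group (a Reynolds operator argument) in the monomial basis $\{Y^A\}$ from the previous Fact.

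First, the action of $\Sym([n])$ on multilinear polynomials permutes this basis. Since $(\pi\cdot Y)_{i,j} = Y_{\pi(i),\pi(j)}$, we get
\begin{align*}
(\pi \cdot Y)^A = \prod_{\{i,j\}\in A} Y_{\pi(i),\pi(j)} = Y^{\pi(A)},
\end{align*}
where $\pi(A) = \{\{\pi(i),\pi(j)\} : \{i,j\}\in A\}$. This map preserves $|A|$ and sends simple edge sets to simple edge sets.

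Now take $f \in \RR_{\sym}[Y]_{\le D}$. I will use its unique multilinear representative $f = \sum_{|A|\le D} c_A Y^A$; this is the reduced form modulo the constraints $\sS$ described above. By invariance, $f(Y) = f(\pi\cdot Y) = \sum_A c_A Y^{\pi(A)}$ for every $\pi$. This is again a multilinear expansion, so uniqueness of coefficients in the basis gives $c_{\pi(A)} = c_A$ for all $\pi$ and $A$.

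Hence $c_A$ is constant on $\Sym([n])$-orbits of edge sets. The orbit of $A$ is exactly $\Emb(\alpha)$, where $\alpha$ is the isomorphism class of the graph spanned by $A$ with isolated vertices removed. Two edge sets lie in the same orbit if and only if the graphs they span are isomorphic, since any isomorphism between them extends to a permutation of $[n]$. The empty set contributes the constant term, which I count as the empty graph in $\sG_{\le D}$ with $N_\emptyset = 1$. Grouping terms by orbit gives $f = \sum_{\alpha} c_\alpha N_\alpha$. Conversely, each $N_\alpha$ is symmetric, because $\pi$ permutes $\Emb(\alpha)$.

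The one delicate point is the uniqueness of coefficients. A function on regular graphs could a priori have several multilinear representatives, since the degree constraint $\sum_j Y_{i,j} = d$ is not removed by multilinearization. I would handle this by working at the level of formal multilinear polynomials, exactly as the paper defines $\RR[Y]_{\le D}$ and its basis. If symmetry is meant only as functions on $d$-regular graphs, I would instead symmetrize a representative directly: set $\bar f = \frac{1}{n!}\sum_\pi f(\pi\cdot Y)$. This agrees with $f$ on regular graphs and is a combination of the $N_\alpha$ by the same orbit grouping, which avoids the uniqueness issue altogether.
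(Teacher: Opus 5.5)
Your proof is correct. The paper states this as a Fact without proof, and your argument is the standard justification it implicitly relies on: $\Sym([n])$ permutes the monomial basis via $(\pi\cdot Y)^A = Y^{\pi(A)}$, so invariance forces the coefficients $c_A$ to be constant on orbits, and each orbit is exactly some $\Emb(\alpha)$. Your point that the degree constraints in $\sS$ are not removed by multilinearization is apt. The averaging fallback you give is the same symmetrization the paper uses in the proof of Corollary~\ref{cor:low-deg-hardness}, and it suffices for the only downstream use, Lemma~\ref{lem:leaf-peeling}, which works modulo $\sS$ anyway.
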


Since both $\sQ^S$ and $\sP^S$ are permutation invariant, symmetric PTFs are a natural class of PTFs to consider, whose outputs are functions of the isomorphism class of the input graph. Next, we exploit the special structure of simple regular graphs and show that we can further restrict our attention to the following subspace of polynomials in $\RR_{\sym}[Y]_{\le D}$ without loss of generality.

\begin{definition}
    Let $\sK_{\le D} \subseteq \RR_{\sym}[Y]_{\le D}$ be the subspace of polynomials with degree at most $D$ spanned by the following polynomials:
    \begin{align*}
        \left\{N_{\alpha}(Y) \colonequals \sum_{A \in \Emb(\alpha)} Y^A\,:\, \alpha \in \sG_{\le D}, \delta(\alpha) \ge 2 \right\},
    \end{align*}
    where $\delta(\alpha)$ denotes the minimum degree of $\alpha$.
\end{definition}

\begin{lemma}\label{lem:leaf-peeling}
    Recall that $\sS$ is the set of constraints imposed by $d$-regular simple graphs. For any $f \in \RR_{\sym}[Y]_{\le D}$, there exists a polynomial $g \in \sK_{\le D}$ such that $f \equiv g$ modulo the constraints $\sS$.
\end{lemma}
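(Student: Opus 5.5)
We argue by strong induction, on the number of edges first and then on the number of degree-$1$ vertices. It suffices to treat a single basis element $N_\alpha$ with $\alpha \in \sG_{\le D}$, since $\sK_{\le D}$ is a subspace and reduction modulo $\sS$ is linear. Concretely, we show that every $N_\alpha$ is congruent modulo $\sS$ to a linear combination of $N_\beta$'s where each $\beta$ either has fewer edges than $\alpha$, or has the same number of edges and strictly fewer degree-$1$ vertices. (Isolated vertices never occur, since $\sG$ excludes them, and constants correspond to the empty graph, which lies in $\sK$ vacuously.) Iterating this terminates, and every surviving term has minimum degree at least $2$. Throughout, the degree never increases, so $g \in \sK_{\le D}$.

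\textbf{The peeling identity.} Let $\alpha$ have a leaf $\ell$ attached to $u$. Write $\alpha' = \alpha - \ell$, which has one fewer edge and may acquire an isolated vertex $u$ only if the edge $\{u,\ell\}$ was a component; we treat that case separately below. The key step is to work with labelled copies. Consider
\[
\sum_{\phi} \prod_{\{a,b\}\in E(\alpha')} Y_{\phi(a),\phi(b)},
\]
summing over injective maps $\phi: V(\alpha')\to[n]$. Multiply each term by $d = \sum_{w} Y_{\phi(u),w}$, which holds modulo $\sS$. Split the sum over $w$ into two parts. For $w \notin \phi(V(\alpha'))$, we obtain the labelled embeddings of $\alpha$, i.e.\ $|\Aut(\alpha)|\,N_\alpha$. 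For $w = \phi(x)$ with $x \in V(\alpha')$, we obtain either $Y_{\phi(u),\phi(x)}$ times a monomial already containing that factor (reducing by $Y^2 = Y$ to the embedding of $\alpha'$), or the embedding of $\alpha' + \{u,x\}$. The latter has the same number of edges as $\alpha$ but one fewer vertex. Hence
\[
|\Aut(\alpha)|\,N_\alpha \equiv \bigl(d - \deg_{\alpha'}(u)\bigr)\,|\Aut(\alpha')|\,N_{\alpha'} - \sum_{x \not\sim u,\ x\ne u} c_x N_{\alpha'+\{u,x\}}
\]
for suitable positive constants $c_x$ accounting for automorphisms. Here $Y_{ii} = 0$ kills the term $w = \phi(u)$.

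\textbf{Termination.} The graphs $\alpha' + \{u,x\}$ have the same edge count, but the vertex set has shrunk by one, namely the leaf $\ell$ was removed and no vertex was added. So an induction on $(|E|, |V|)$ in lexicographic order, with $|V|$ decreasing, terminates. It is cleaner to use this measure than the number of leaves, since adding $\{u,x\}$ could make $x$ stop being a leaf but never creates a new isolated vertex. Indeed $x$ had degree at least $1$ in $\alpha'$ unless $x = u$ isolated, and $u$ itself gains degree.

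The case where $\{u,\ell\}$ is an isolated edge component is handled the same way, summing instead over placements of the remaining graph $\alpha'' = \alpha - \{u,\ell\}$ times $\sum_{\{a,b\}} Y_{a,b} = nd/2$. This expresses $N_\alpha$ as a constant times $N_{\alpha''}$ minus graphs obtained by gluing the edge onto existing vertices, all of which have fewer vertices.

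\textbf{Main obstacle.} The main obstacle is the careful bookkeeping of automorphism factors and coincidences: $\phi(x)$ may coincide with an existing neighbor, giving the $Y^2 = Y$ collapse, and the reduced graph may itself be disconnected. None of this affects the monotone decrease of $(|E|, |V|)$, which is the only property the induction needs. Symmetry guarantees that every resulting expression is a combination of $N_\beta$'s.
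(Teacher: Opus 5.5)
Your proof is correct and is essentially the paper's argument: peel a leaf off using the degree constraint $\sum_a Y_{\tau(y),a}=d$ at its neighbor, which yields $\alpha^-=\alpha-x$ and the graphs $\alpha^-+\{y,z\}$, all on fewer vertices. The only differences are minor: the paper inducts on $|V(\alpha)|$ alone, which already decreases for every term, so your lexicographic $(|E|,|V|)$ measure and the leaf-count measure in your first paragraph are unnecessary but harmless; and the paper lets $\alpha^-$ carry an isolated vertex in the isolated-edge case rather than treating that case separately via $\sum_{\{a,b\}}Y_{a,b}=nd/2$ as you do.
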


\begin{proof}
    Any $f \in \RR_{\sym}[Y]_{\le D}$ can be written as a polynomial in $N_{\alpha}(Y)$ with $\alpha \in \sG_{\le D}$. It is enough to prove that any $N_{\alpha}(Y)$ with $\alpha \in \sG_{\le D}$, after being reduced by $\sS$, can be written as a polynomial in $\sK_{\le D}$. We will prove this by induction on the number of vertices $v(\alpha) \colonequals |V(\alpha)|$.

    The base cases are $v(\alpha) \le 2$. Note that $\alpha$ with at most $2$ vertices is either an empty graph or a single edge. In both cases, modulo the constraints $\sS$, $N_{\alpha}(Y)$ is a constant polynomial since $\sS$ forces $\sum_{\{i,j\} \in \binom{[n]}{2}}Y_{i,j} = \frac{nd}{2}$, and hence obviously belongs to $\sK_{\le D}$.

    Suppose that for some $h \ge 2$, any $N_{\alpha'}(Y)$ with $\alpha' \in \sG_{\le D}$ and $v(\alpha') \le h$ can be written as a polynomial in $\sK_{\le D}$ after being reduced by $\sS$. Now consider an arbitrary $\alpha \in \sG_{\le D}$ with $v(\alpha) = h+1$. If $\delta(\alpha) \ge 2$, then we are done and by definition $N_{\alpha}(Y) \in K_{\le D}$. Otherwise, $\delta(\alpha) = 1$, and there exists a leaf $x \in V(\alpha)$ that has degree $1$. Let $y \in V(\alpha)$ be the unique vertex in $\alpha$ adjacent to $x$. In this case, we expand the definition of $N_\alpha(Y)$ and get
    \begin{align*}
        &N_{\alpha}(Y)\\
        &= \sum_{A \in \Emb(\alpha)} Y^A\\
        &= \frac{1}{|\Aut(\alpha)|}\sum_{\tau: V(\alpha) \hookrightarrow [n] } \prod_{\{i,j\} \in \alpha} Y_{\tau(i), \tau(j)}
        \intertext{where $|\Aut(\alpha)|$ is the size of the automorphism group of $\alpha$, and the summation is over all injective maps from $V(\alpha)$ to $[n]$. Now we first sum over the labels of $V(\alpha) \setminus \{x\}$, and then the label $a$ of $x$.}
        &= \frac{1}{|\Aut(\alpha)|} \sum_{\tau: V(\alpha) \setminus \{x\} \hookrightarrow [n]} \prod_{\{i,j\} \in \alpha \setminus\{ \{x,y\} \} } Y_{\tau(i), \tau(j)} \sum_{a \in [n] \setminus \tau(V(\alpha) \setminus \{x\})} Y_{\tau(y), a}\\
        &= \frac{1}{|\Aut(\alpha)|} \sum_{\tau: V(\alpha) \setminus \{x\} \hookrightarrow [n]} \prod_{\{i,j\} \in \alpha \setminus\{ \{x,y\} \} } Y_{\tau(i), \tau(j)} \left(\sum_{a \in [n]} Y_{\tau(y), a} - \sum_{z \in V(\alpha) \setminus \{x\}} Y_{\tau(y), \tau(z)}\right)
        \intertext{Note that $\sum_{a \in [n]} Y_{\tau(y), a} = d$ and $Y_{\tau(y), \tau(y)} = 0$ by $\sS$. Thus,}
        &= \frac{1}{|\Aut(\alpha)|} \sum_{\tau: V(\alpha) \setminus \{x\} \hookrightarrow [n]} \prod_{\{i,j\} \in \alpha \setminus\{ \{x,y\} \} } Y_{\tau(i), \tau(j)} \left(d - \sum_{z \in V(\alpha) \setminus \{x,y\}} Y_{\tau(y), \tau(z)}\right)\\
        &= \frac{1}{|\Aut(\alpha)|} \left(d |\Aut(\alpha^{-})| \cdot N_{\alpha^{-}}(Y) - \sum_{z \in V(\alpha)\setminus\{x,y\}} |\Aut(\alpha^z)| \cdot N_{\alpha^z}(Y)\right),
    \end{align*}
    where $\alpha^{-}$ is the graph obtained from $\alpha$ by removing vertex $x$ and edge $\{x,y\}$, and $\alpha^z$ for $z \in V(\alpha) \setminus \{x,y\}$ is the graph obtained from $\alpha^{-}$ by adding the edge $\{y,z\}$ if it is not already present in $\alpha^{-}$. Note that $\alpha^{-}$ and $\alpha^z$ all have at most $v(\alpha) - 1 = h$ vertices, and moreover $\alpha^{-}$ and $\alpha^z$ have at most as many edges as $\alpha$. By the inductive hypothesis, there are polynomials $g^{-}, g^z \in \sK_{\le D}$ such that modulo the constraints $\sS$, $N_{\alpha^{-}}(Y) \equiv g^{-}$ and $N_{\alpha^z}(Y) \equiv g^{z}$ for all $z \in V(\alpha) \setminus \{x,y\}$. Therefore, there exists $g \in \sK_{\le D}$ such that $N_{\alpha}(Y) \equiv g$ modulo $\sS$.
    
    By induction, this finishes the proof.
\end{proof}

\subsection{Auxiliary Results}

We first prove a lemma on the following rare event.

\begin{lemma}\label{lem:rare-event}
    Fix $d \ge 3$ and $\delta > 0$.
    
    Let $G$ be a $d$-regular simple graph on $n$ vertices. Let $R_{\le t}$ denote the event that $G$ contains a subgraph $A \subseteq G$ such that $A$ is connected, $|E(A)| \le t$, and $|E(A)| - |V(A)| = 1$. Suppose $H$ is a base $d$-regular graph on $k$ vertices potentially with half-loops. Then, there exists a constant $c = c(H) > 0$ such that for any $D \le c\log(n)$,
    \begin{align*}
        \Pr_{G \sim \sQ^S}(R_{\le D}) = o(1), \;\Pr_{G \sim \sP^S}(R_{\le D}) = o(1), \quad \text{ as } n \to \infty.
    \end{align*}

    Similarly, suppose $H$ is a base bipartite $d$-regular graph on $2k$ vertices with a given bipartition. Then, there exists a constant $c = c(H) > 0$ such that for any $D \le c\log(n)$,
    \begin{align*}
        \Pr_{G \sim \sQ^{b,S}}(R_{\le D}) = o(1), \;\Pr_{G \sim \sP^{b,S}}(R_{\le D}) = o(1), \quad \text{ as } n \to \infty.
    \end{align*}
\end{lemma}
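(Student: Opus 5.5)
We use a first moment bound via Lemma~\ref{lem:subgraph-containment}, combined with a count of the possible shapes of the excess-one subgraph $A$. First we reduce to the multigraph distributions. Corollary~\ref{cor:simple-prob} gives $\Pr_{\sP}(S)\ge p>0$ and likewise for $\sP^b$, and the cases $\delta=1$ of these statements cover $\sQ$ and $\sQ^b$. Hence
\[
\Pr_{\sP^S}(R_{\le D}) \le \Pr_{\sP}(R_{\le D}\cap S)/\Pr_{\sP}(S) \le p^{-1}\Pr_{\sP}(R_{\le D}\cap S).
\]
So it suffices to bound $\Pr_{\sP}(R_{\le D}\cap S)$, and the same for $\sQ$, which is the special case $r = nd/2$ of Lemma~\ref{lem:subgraph-containment}. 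On $S$ the witness $A$ is a simple graph, so Lemma~\ref{lem:subgraph-containment} applies to it directly.

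Next we reduce the witness to a minimal form. If $G$ contains a connected $A$ with $|E(A)|\le t$ and $|E(A)|-|V(A)|=1$, we repeatedly delete degree-one vertices together with their edges. This keeps the excess $|E|-|V|$ unchanged and does not increase the number of edges. The result is a connected graph of minimum degree at least $2$ with excess $1$, and such a graph is a subdivision of one of the three "bicycle" shapes: two cycles joined by a path, two cycles sharing a vertex, or a theta graph. Such a graph is determined by at most three path lengths summing to $\ell=|E(A)|\le D$, so there are at most $O(\ell^2)$ isomorphism types with $\ell$ edges. For each type, the number of labelled copies in $K_n$ is at most $n^{|V(A)|}=n^{\ell-1}$.

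Now we take a union bound. Lemma~\ref{lem:subgraph-containment} applies because $\ell\le D=O(\log n)=o(n)$, and it gives
\[
\Pr_{\sP}(R_{\le D}\cap S)\le \sum_{\ell\le D} O(\ell^2)\, n^{\ell-1}\left(\frac{K}{n}\right)^{\ell} = \frac{1}{n}\sum_{\ell\le D}O(\ell^2)K^\ell \le \frac{O(D^3)K^D}{n}.
\]
Take $c<1/(2\log K)$ (any $c>0$ works if $K\le 1$). Then $K^D\le n^{1/2}$, and the bound becomes $O(\log^3 n/\sqrt n)=o(1)$. The bipartite case is identical, using the bipartite half of Lemma~\ref{lem:subgraph-containment} and the fact that $|V|=2n$ only changes constants.

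The one delicate point is that the bound from Lemma~\ref{lem:subgraph-containment} must hold uniformly, with a single constant $K$, over all simple $F$ of size $o(n)$. This is exactly what the lemma states, so the only real work is the shape count. The rest is routine bookkeeping; the key fact is that $|V(A)|=|E(A)|-1$ leaves one spare factor of $1/n$, which the $K^D$ loss cannot overcome when $c$ is small.
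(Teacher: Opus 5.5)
Your argument is correct and is essentially the paper's proof. Both use a first-moment union bound over isomorphism types and labelled copies via Lemma~\ref{lem:subgraph-containment}, take the spare factor $1/n$ from $|V(A)| = |E(A)|-1$, and then use Corollary~\ref{cor:simple-prob} to pass to the simple-conditioned measures. The differences are minor. You prune to the $2$-core and count $O(\ell^2)$ bicycle shapes, whereas the paper counts all connected excess-one graphs directly as a spanning tree plus two edges, giving at most $t^4 2^{2t}$ types; this is cruder but equally sufficient for $D \le c\log(n)$. Your bound on $\Pr_{\sP}(R_{\le D}\cap S)$ is also a slightly cleaner way to handle the conditioning than the paper's.
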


\begin{proof}
    Let $\sG^+_{t}$ denote the collection of unlabelled connected simple graphs with exactly $t$ edges and $t-1$ vertices. Note that any graph in $\sG^+_{t}$ can be expressed as a spanning tree on $t-1$ vertices plus two additional edges not inside the spanning tree. Moreover, the number of non-isomorphic spanning trees\footnote{The exact asymptotics of the number of non-isomorphic spanning trees on $v$ vertices is given by $C\alpha^v v^{-\frac{5}{2}}$, where $C \approx 0.535$ is a constant and $1/\alpha \approx 0.338$ is the Otter's tree constant.} on $t - 1$ vertices is upper bounded by the number of rooted ordered trees\footnote{This is the set of trees with a designated root, in which the children of every node are ordered from left to right. There is a classical correspondence between rooted ordered trees with $e$ edges and Dyck paths of order $e$.} on $t - 1$ vertices, given by the Catalan number $\frac{1}{t-1} \binom{2t-4}{t-2} \le 2^{2t}$. Given a spanning tree on $t-1$ vertices, we need to enumerate the choices for two additional edges in order to get a graph in $\sG^+_t$, which gives at most $\binom{\binom{t-1}{2}}{2} \le t^4$ choices. Thus, the total number of graphs in $\sG^+_t$ is at most $t^4 2^{2t}$.

    By a union bound over all $\alpha \in \sG^+_t$ for $1 \le t \le D$, and for each $\alpha \in \sG^+_t$ at most $n^{|V(\alpha)|}$ copies of $A \subseteq \binom{[n]}{2}$ isomorphic to $\alpha$, we have
    \begin{align*}
        \Pr_{G \sim \sQ}\left(R_{\le D}\right) &\le \sum_{t=1}^D\sum_{\alpha \in \sG_{t}^+} \sum_{\substack{A \subseteq \binom{[n]}{2}:\\ A \cong \alpha}} \Pr(A \subseteq G)\\
        &\le \sum_{t=1}^D\sum_{\alpha \in \sG_{t}^+} \sum_{\substack{A \subseteq \binom{[n]}{2}:\\ A \cong \alpha}} \left(\frac{K}{n}\right)^{|E(\alpha)|} \quad &&(\text{Lemma~\ref{lem:subgraph-containment}})\\
        &\le \sum_{t=1}^D \sum_{\alpha \in \sG_{t}^+} n^{|V(\alpha)|} \left(\frac{K}{n}\right)^{|E(\alpha)|}\\
        &\le \sum_{t=1}^D t^42^{2t} \frac{K^t}{n}, \quad && (|E(\alpha)|-|V(\alpha)|=1)
    \end{align*}
    where $K = K(H)$ is the constant given by Lemma~\ref{lem:subgraph-containment}. Thus, there exists $c = c(H) > 0$ such that for any $D \le c\log(n)$,
    \begin{align*}
        \Pr_{G \sim \sQ}\left(R_{\le D}\right) &\le \sum_{t=1}^D t^42^{2t} \frac{K^t}{n} = o(1).
    \end{align*}
    By Corollary~\ref{cor:simple-prob}, the probability of $G \sim \sQ$ being simple is bounded away from $0$, and we conclude
    \begin{align*}
        \Pr_{G \sim \sQ^S}\left(R_{\le D}\right) &= \frac{\Pr_{G \sim \sQ}\left(R_{\le D}\right)}{\Pr_{G \sim \sQ}\left(G\text{ is simple}\right)} = o(1).
    \end{align*}

    The proof is identical for $\sP^S$ and the bipartite versions.
\end{proof}

\begin{remark}\label{rem:bicycles}
    Note that on the good event $R_{\le L}^c$, $G$ does not contain bicycles of small sizes, i.e., a pair of distinct short cycles that share at least one vertex. This will be particularly useful, as it allows for a nice factorization of $N_{\alpha}(Y)$ when $\alpha$ is a disjoint union of cycles, as shown in the following lemma. 
\end{remark}

\begin{lemma}\label{lem:function-of-cycles}
    Let $f \in \sK_{\le D}$. There exists a function $P: \RR^{D-2} \to \RR$ such that
    \begin{align*}
        f(Y) = P( Z_3, Z_4, \dots, Z_D)
    \end{align*}
    whenever $Y = \{Y_{i,j}: \{i,j\} \in \binom{[n]}{2}\} \in \{0,1\}^{\binom{[n]}{2}}$ encodes a $d$-regular simple graph $G$ and $G \in R_{\le 2D}^c$.
    Here $Z_t \colonequals N_{C_t}(Y)$ is the number of length-$t$ cycles in $G$.
\end{lemma}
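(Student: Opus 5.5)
By linearity it suffices to treat a single basis element $N_\alpha(Y)$ with $\alpha \in \sG_{\le D}$ and $\delta(\alpha) \ge 2$. The plan is to show that on the good event $R_{\le 2D}^c$, every copy of $\alpha$ inside $G$ must be a vertex-disjoint union of cycles. Then $N_\alpha(Y)$ becomes an explicit polynomial in the cycle counts $Z_3,\dots,Z_D$, and summing over the terms of $f$ gives the required function $P$.

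\textbf{Step 1: the structural claim.} Fix $A \subseteq G$ with $A \cong \alpha$, so $A$ has at most $D$ edges and minimum degree at least $2$. Take any connected component $A'$ of $A$. Since every vertex has degree at least $2$, we have $|E(A')| \ge |V(A')|$. Suppose $A'$ is not a cycle. Then $|E(A')| \ge |V(A')|+1$, and I will extract from it a connected subgraph $B \subseteq A'$ with $|E(B)| - |V(B)| = 1$ and $|E(B)| \le 2D$.

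The construction is as follows. Since $A'$ has cyclomatic number at least $2$, it contains two distinct cycles.
\begin{itemize}
\item If two such cycles share a vertex, their union is connected, has at most $2D$ edges, and has excess at least $1$.
\item If they are vertex-disjoint, join them by a shortest path in $A'$. This gives a connected "dumbbell" with excess exactly $1$, still with at most $|E(A')| \le D$ edges.
\end{itemize}
In either case we get a connected subgraph $B_0$ with $|E(B_0)| \le 2D$ and $|E(B_0)| - |V(B_0)| \ge 1$. To reach excess exactly $1$, take a spanning tree $T$ of $B_0$ and add back exactly two non-tree edges; the result is connected with excess exactly $1$ and has at most $|E(B_0)|$ edges. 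Such a $B$ witnesses $R_{\le 2D}$, a contradiction. Hence every component of $A$ is a cycle of length at least $3$ (since $G$ is simple), and the components are vertex-disjoint by definition.

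\textbf{Step 2: factorization.} Write $\alpha = \bigsqcup_t C_t^{\,m_t}$, a disjoint union of $m_t$ cycles of length $t$ for each $t$, with $\sum_t t\,m_t \le D$. If $\alpha$ is not of this form, Step 1 gives $N_\alpha(Y) = 0$ on the good event. Otherwise, a copy of $\alpha$ in $G$ is the same as an unordered choice of $m_t$ distinct $t$-cycles of $G$ for each $t$, with all chosen cycles pairwise vertex-disjoint.

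On $R_{\le 2D}^c$, any two distinct cycles of length at most $D$ are automatically vertex-disjoint. Indeed, if they shared a vertex, their union would be a connected graph with at most $2D$ edges and excess at least $1$, and the reduction from Step 1 would again produce a witness of $R_{\le 2D}$. Therefore the vertex-disjointness condition is vacuous, and
\begin{align*}
N_\alpha(Y) = \prod_{t=3}^{D} \binom{Z_t}{m_t},
\end{align*}
which is a polynomial in $Z_3,\dots,Z_D$. Summing over the basis elements appearing in $f$ defines $P$.

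\textbf{Main obstacle.} The step needing the most care is the reduction from "connected with excess at least $1$" to "connected with excess exactly $1$ and at most $2D$ edges", because $R_{\le t}$ is defined only in terms of excess exactly $1$. The spanning tree plus two non-tree edges argument handles this cleanly, and the edge budget of $2D$ (rather than $D$) is exactly what is needed to cover the union of two cycles of length at most $D$.
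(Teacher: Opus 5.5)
Your proof is correct and follows the same route as the paper: reduce to a single $N_\alpha$ with $\delta(\alpha)\ge 2$, use the absence of small connected excess-one subgraphs to force every copy of $\alpha$ to be a disjoint union of cycles and all cycles of length at most $D$ in $G$ to be pairwise vertex-disjoint, and conclude $N_\alpha(Y)=\prod_t\binom{Z_t}{m_t}$. Your spanning-tree-plus-two-non-tree-edges step makes explicit the reduction from excess at least one to excess exactly one, which the paper uses without stating. That step could even be applied directly to the component $A'$ (connected, excess at least one, at most $D$ edges), so the two-cycle construction in Step 1 is not needed.
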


\begin{proof}
    By definition, any $f \in \sK_{\le D}$ is a linear combination of $N_{\alpha}(Y)$ where $\alpha \in \sG_{\le D}$ and $\delta(\alpha) \ge 2$. It is therefore sufficient to show that for any $\alpha \in \sG_{\le D}$ with $\delta(\alpha) \ge 2$, we can write $N_{\alpha}(Y)$ as a function of the cycle counts $Z_3, Z_4, \dots, Z_D$ when $Y$ encodes a $d$-regular simple graph $G \in R_{\le D}^c$.

    Recall that $R_{\le D}^c$ is the complement of $R_{\le D}$, the collection of graphs that contain some connected subgraph $A$ with $|E(A)| - |V(A)|=1$. Since $G \in R_{\le D}^c$, $N_{\alpha}(Y) = \sum_{A \in \Emb(\alpha)} Y^A$ evaluates to $0$ whenever $\alpha$ has a connected component with more edges than vertices. Consequently, the only nonzero $N_{\alpha}(Y)$ for $\alpha \in \sG_{\le D}$ with $\delta(\alpha) \ge 2$ must satisfy that every connected component has a number of edges at most the number of vertices, and has minimum degree at least $2$. Therefore, we conclude that $N_{\alpha}(Y)$ is nonzero only if $\alpha$ is a disjoint union of cycles.

    For $\alpha \in \sG_{\le D}$ that is a disjoint union of cycles, let us express it as
    \begin{align*}
        \alpha = \bigsqcup_{t=3}^D (C_t)^{i_t}, 
    \end{align*}
    where the exponents $i_t \in \NN$ indicate the number of length-$t$ cycles $C_t$ in $\alpha$. Note that all the cycles of length at most $D$ in $G \in R_{\le 2D}^c$ are vertex-disjoint, since otherwise $G$ would contain a subgraph $F$ with $|E(F)| \ge |V(F)| + 1$ and $|E(F)| \le 2D$, contradicting $G \in R_{\le 2D}^c$. As a result, $N_{\alpha}(Y)$, the number of subgraphs of $G$ isomorphic to $\alpha$, is equal to
    \begin{align}
        N_{\alpha}(Y) = \prod_{t=3}^D \binom{Z_t}{i_t}. \label{eq:alpha-count}
    \end{align}
    Thus, when $Y$ encodes a $d$-regular simple graph $G \in R_{\le 2D}^c$, for any $\alpha \in \sG_{\le D}$ with $\delta(\alpha) \ge 2$,
    \begin{align*}
        N_{\alpha}(Y) = \begin{cases}
            P_{\alpha}(Z_3, Z_4, \dots, Z_D) & \quad \text{ if } \alpha \text{ is a disjoint union of cycles},\\
            0 & \quad \text{ otherwise}. 
        \end{cases}
    \end{align*}
    for the explicit function $P_{\alpha}$ given by \eqref{eq:alpha-count}. This finishes the proof.
\end{proof}

As a last step before we assemble all the pieces needed for the proof of Theorem~\ref{thm:ptf-hardness}, we prove the following TV distance bound between sequences of independent Poisson distributions.

\begin{proposition}\label{prop:Ramanujan-TV-bound}
    Fix $d \ge 3$ and $0 <\delta \le 1$. 
    
    Suppose $H$ is a base $d$-regular graph on $k$ vertices potentially with half-loops, and $H$ is Ramanujan. Let $B$ be the non-backtracking matrix of $H$. Then, there exists a constant $K = K(H,\delta) > 0$ such that for any $D = D(n)$,
    \begin{align*}
        d_{\TV}\left(\bigotimes_{t=3}^D \Pois\left(\frac{(d-1)^t}{2t}\right),  \bigotimes_{t=3}^D \Pois(\mu_t)\right) \le 1 - K,
    \end{align*}
    where $\mu_t \colonequals (1-\delta)^t \left(\nu_t - \frac{(d-1)^t}{2t}\right) + \frac{(d-1)^t}{2t}$ and $\nu_t \colonequals \frac{\tr(B^t)}{2t}$.

    Suppose $H$ is a base bipartite $d$-regular graph on $2k$ vertices, and $H$ is bipartite Ramanujan. Let $B$ be the non-backtracking matrix of $H$. Then, there exists a constant $K = K(H, \delta) > 0$ such that for any $L = L(n)$,
    \begin{align*}
        d_{\TV}\left(\bigotimes_{t=2}^L \Pois\left(\frac{(d-1)^{2t}}{2t}\right),  \bigotimes_{t=2}^L \Pois(\mu_{2t}')\right) \le 1 - K,
    \end{align*}
    where $\mu_{2t}' \colonequals (1-\delta)^{2t} \left(\nu_{2t} - \frac{(d-1)^{2t}}{2t}\right) + \frac{(d-1)^{2t}}{2t}$ and $\nu_{2t} \colonequals \frac{\tr(B^{2t})}{4t}$.
\end{proposition}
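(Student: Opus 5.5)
The approach is to compare the two product measures through the Bhattacharyya (Hellinger) affinity, which tensorizes exactly over independent coordinates. For $\Pois(a)$ and $\Pois(b)$ the affinity is $\exp\!\left(-\tfrac12(\sqrt a-\sqrt b)^2\right)$. Hence, writing $\lambda_t \colonequals \frac{(d-1)^t}{2t}$, the affinity of the two products in the statement is
\begin{align*}
\mathrm{BC} = \exp\Big(-\tfrac12\sum_{t=3}^D (\sqrt{\mu_t}-\sqrt{\lambda_t})^2\Big).
\end{align*}
Combined with the standard inequality $d_{\TV}\le\sqrt{1-\mathrm{BC}^2}$, it suffices to show that $S\colonequals\sum_{t\ge 3}(\sqrt{\mu_t}-\sqrt{\lambda_t})^2$ is finite, with a bound depending only on $H$ and $\delta$. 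Such a bound is automatically uniform in $D=D(n)$, and then $K\colonequals 1-\sqrt{1-e^{-S}}$ works. To bound each term I will use $(\sqrt{\mu}-\sqrt{\lambda})^2 = \frac{(\mu-\lambda)^2}{(\sqrt\mu+\sqrt\lambda)^2}\le \frac{(\mu-\lambda)^2}{\lambda}$. This needs $\mu_t\ge 0$, which holds because $\mu_t=(1-\delta)^t\nu_t+(1-(1-\delta)^t)\lambda_t$ and $\nu_t=\tr(B^t)/(2t)\ge 0$ (the trace counts closed non-backtracking walks).

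Since $\mu_t-\lambda_t=(1-\delta)^t(\nu_t-\lambda_t)$, the key estimate is
\begin{align*}
|\tr(B^t)-(d-1)^t|\le C_H (d-1)^{t/2}\quad\text{for all }t,
\end{align*}
in the non-bipartite Ramanujan case. This is where the Ramanujan hypothesis enters, via the Ihara--Bass correspondence. Each adjacency eigenvalue $\lambda_i$ of $M$ produces the two non-backtracking eigenvalues that are the roots of $z^2-\lambda_i z+(d-1)=0$, and the remaining eigenvalues of $B$ are $\pm1$. The eigenvalue $\lambda_1=d$ gives $d-1$ and $1$. For $|\lambda_i|\le 2\sqrt{d-1}$ the two roots are complex conjugates (or a repeated real root) of modulus exactly $\sqrt{d-1}$. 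Summing $t$-th powers over all $|\vec E(H)|$ eigenvalues then gives the bound with $C_H=|\vec E(H)|+1$, using $1\le (d-1)^{t/2}$. If $B$ is not diagonalizable, the trace is still the sum of $t$-th powers of eigenvalues with multiplicity, so the argument is unaffected.

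Plugging this in gives
\begin{align*}
\frac{(\mu_t-\lambda_t)^2}{\lambda_t}\le (1-\delta)^{2t}\frac{C_H^2(d-1)^t/(4t^2)}{(d-1)^t/(2t)}=\frac{C_H^2(1-\delta)^{2t}}{2t}.
\end{align*}
This is summable because $\delta>0$; when $\delta=1$ the terms vanish. The bipartite case is identical. There $B$ has the two Perron-type eigenvalues $\pm(d-1)$, coming from $\lambda=\pm d$, so $\tr(B^{2t})=2(d-1)^{2t}+O((d-1)^t)$. This gives $\nu_{2t}=\frac{(d-1)^{2t}}{2t}+O\!\left(\frac{(d-1)^t}{t}\right)$, and the same computation with $(1-\delta)^{4t}/t$ applies.

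The main obstacle is justifying the Ihara--Bass eigenvalue description when $H$ has half-loops. In the classical setting a half-loop is not standard, and the multiplicity count of the $\pm1$ eigenvalues changes; for instance, a half-loop directed edge is its own reverse, so $B$ has no backtracking exclusion issue there. I would first try to invoke the half-loop version of Ihara--Bass used in \cite{kunisky2024computational}. Failing that, I would prove directly the only thing actually needed: every eigenvalue of $B$ other than $d-1$ (respectively $\pm(d-1)$) has modulus at most $\sqrt{d-1}$ or equal to $1$. To do this, I would take an eigenvector of $B$, push it down to a vector on $V(H)$ via $x_i=\sum_{\tail(\vec e)=i}f(\vec e)$, and derive the relation $(z^2-\lambda z+(d-1))x=0$ with $\lambda$ an eigenvalue of $M$, treating the case $x=0$ separately, which yields the trivial eigenvalues of modulus $1$.
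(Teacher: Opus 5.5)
Your argument is correct and is essentially the paper's proof. Both use Ihara--Bass (the paper cites Friedman's half-loop version) together with the Ramanujan hypothesis to get $|\tr(B^t)-(d-1)^t|\lesssim (d-1)^{t/2}$, which reduces everything to the summable series $\sum_t(\mu_t-\lambda_t)^2/\lambda_t\lesssim\sum_t(1-\delta)^{2t}/t$; the only difference is that the paper tensorizes the KL divergence and finishes with Bretagnolle--Huber, whereas you tensorize the Bhattacharyya affinity and use $d_{\TV}\le\sqrt{1-\mathrm{BC}^2}$.

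One small caveat concerns your fallback. The eigenvector push-down argument only locates the eigenvalues, so it does not by itself give "the only thing actually needed". You would also need the eigenvalue $d-1$ to have algebraic multiplicity one, which follows from the Ihara--Bass determinant identity or from Perron--Frobenius once irreducibility of $B$ is checked. Citing the Ihara--Bass formula, as your main plan does, supplies this automatically.
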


\begin{proof}
    First we consider the non-bipartite case. Suppose $H$ is $d$-regular Ramanujan on $k$ vertices potentially with half-loops, and $B$ be the non-backtracking matrix of $H$. Let us first analyze the spectrum of $B$.

    Let $M$ be the adjacency matrix of $H$. Recall that the non-backtracking matrix $B$ of $H$, defined in Definition~\ref{def:nb-matrix}, is indexed by $\vec{E}(H)$, the set of oriented edges of $H$, and has entries
    \begin{align*}
        B_{\vec{e}_1,\vec{e}_2} = \One\{\tail(\vec{e}_2) = \head(\vec{e}_1), \vec{e}_2 \ne \iota(\vec{e}_1)\}.
    \end{align*}

    The Ihara-Bass formula in the presence of half-loops \cite[equation (1.1)]{friedman2014relativized} gives
    \begin{align*}
        \det(\lambda I - B) &=  (\lambda - 1)^{|E_1(H)|} (\lambda^2 - 1)^{|E_2(H)| - |V(H)|} \cdot \det(\lambda^2 I - \lambda M + (d-1)I),
    \end{align*}
    where $|E_1(H)|$ is the number of half-loops in $H$ and $|E_2(H)|$ is the number of edges of $H$ that are not half-loops. Thus, any eigenvalue $\lambda \ne \pm 1$ of $B$ satisfies $\det(\lambda^2 I - \lambda M + (d-1)I) = 0$.
    Since $M$ is symmetric, we may diagonalize $M$, from which we see that any eigenvalue $\lambda \ne \pm 1$ of $B$ is a solution to
    \begin{align}
        \lambda^2 - \lambda \rho + d-1 = 0 \label{eq:eig-relation}
    \end{align}
    for an eigenvalue $\rho$ of $M$. Since $H$ is Ramanujan, $M$ has $1$ eigenvalue of $d$ and all the other eigenvalues at most $2\sqrt{d-1}$ in absolute value. When $\rho = d$, the two solutions to \eqref{eq:eig-relation} are $\lambda = 1$ and $\lambda = d-1$. When $|\rho| \le 2\sqrt{d-1}$, the two solutions to \eqref{eq:eig-relation}, potentially complex, have absolute value at most $\sqrt{d-1}$. Therefore, we conclude that $B$ has one eigenvalue of $d-1$, and all the other eigenvalues at most $\sqrt{d-1}$ in absolute value.

    Note that $|\vec{E}(H)| = kd$. As a result, for any $t \ge 1$, we have
    \begin{align*}
        (d-1)^t - (kd-1) (d-1)^{\frac{t}{2}}\le \tr(B^t) \le (d-1)^t + (kd-1) (d-1)^{\frac{t}{2}},
    \end{align*}
    and
    \begin{align}
        \left|\mu_t - \frac{(d-1)^t}{2t}\right| &= (1-\delta)^t \left|\frac{\tr(B^t)}{2t} - \frac{(d-1)^t}{2t}\right| \le (1-\delta)^t(kd - 1) \frac{(d-1)^{\frac{t}{2}}}{2t}. \label{ineq:mean-difference}
    \end{align}
    
    Using the standard fact about KL divergence between two Poissons \[d_{\KL}(\Pois(a)\|\Pois(b)) = a\log\left(\frac{a}{b}\right) + b - a,\] and the tensorization of KL divergence for product distributions, we now bound the KL divergence between the two sequences of independent Poisson distributions as follows.
    \begin{align*}
        &d_{\KL}\left(\bigotimes_{t=3}^D \Pois(\mu_t)\bigg\| \bigotimes_{t=3}^D \Pois\left(\frac{(d-1)^t}{2t}\right)\right)\\
        &= \sum_{t=3}^D d_{\KL}\left(\Pois(\mu_t)\bigg\| \Pois\left(\frac{(d-1)^t}{2t}\right)\right)\\
        &= \sum_{t=3}^D \mu_t \log\left(\frac{\mu_t}{\frac{(d-1)^t}{2t}}\right) + \frac{(d-1)^t}{2t} - \mu_t\\
        &\le \sum_{t=3}^D \mu_t \left(\frac{\mu_t - \frac{(d-1)^t}{2t}}{\frac{(d-1)^t}{2t}}\right) + \frac{(d-1)^t}{2t} - \mu_t \quad && (\text{Using }\log(1 + r)\le r)\\
        &= \sum_{t=3}^D\frac{\left(\mu_t - \frac{(d-1)^t}{2t}\right)^2}{\frac{(d-1)^t}{2t}}\\
        &\le (kd-1)^2\sum_{t=3}^D \frac{(1-\delta)^{2t}}{2t} \quad && (\text{Using }\eqref{ineq:mean-difference})\\
        &\le U,
    \end{align*}
    for some constant $U = U(H, \delta) < \infty$. By Bretagnolle-Huber inequality \cite{bretagnolle1979estimation},
    \begin{align*}
        &d_{\TV}\left(\bigotimes_{t=3}^D \Pois\left(\frac{(d-1)^t}{2t}\right),  \bigotimes_{t=3}^D \Pois(\mu_t)\right)\\
        &\le \sqrt{1 - \exp\left(-d_{\KL}\left(\bigotimes_{t=3}^D \Pois(\mu_t)\bigg\| \bigotimes_{t=3}^D \Pois\left(\frac{(d-1)^t}{2t}\right)\right)\right)}\\
        &\le \sqrt{1 - \exp\left(-U\right)}\\
        &= 1 - K,
    \end{align*}
    for some $K = K(H, \delta) > 0$. This finishes the proof for the non-bipartite case.

    We briefly explain how the proof changes for the bipartite case. We again apply the Ihara-Bass formula to analyze the spectrum of $B$. For a bipartite Ramanujan $H$, we may show that $B$ has one eigenvalue of $d-1$, one eigenvalue of $-d+1$, and all the other eigenvalues at most $\sqrt{d-1}$ in absolute value. For any $t \ge 1$, we now get that
    \begin{align*}
        2(d-1)^{2t} - (2kd-2) (d-1)^{t}\le \tr(B^{2t}) \le 2(d-1)^{2t} + (2kd-2) (d-1)^{t},
    \end{align*}
    and the rest of the proof for the bipartite case follows similarly to the non-bipartite case.
\end{proof}

Now, we are ready to present the proof of the PTF hardness for detecting noisy random lift of Ramanujan graph.

\subsection{Proof of Theorem~\ref{thm:ptf-hardness}}

Suppose $H$ is $d$-regular Ramanujan on $k$ vertices potentially with half-loops. By Lemma~\ref{lem:rare-event}, there exists $c_1 = c_1(H) > 0$ such that for any $D \le c_1 \log(n)$,
\begin{align}
    \Pr_{G \sim \sQ^S}(R_{\le 2D}) = o(1), \;\Pr_{G \sim \sP^S}(R_{\le 2D}) = o(1), \label{ineq:rare-event}
\end{align}
where $R_{\le 2D}$ denotes the event that $G$ contains a subgraph $A$ with at most $2D$ edges such that $|E(A)| - |V(A)| = 1$.

Recall that $\sP = \sP_n(H, \delta)$ is the distribution of $r$-noisy random $m$-lift of $H$ where $r \colonequals \lceil \delta \frac{nd}{2}\rceil$ and $n \colonequals mk$. By Theorem~\ref{thm:cycle-counts}, there exists $c_2 = c_2(H) > 0$ such that for any $D \le c_2\log(n)$, the distribution of the short cycle counts converges to independent Poissons under both $\sQ$ and $\sP$ as
\begin{align}
        d_{\TV}\left(\sL_{\sQ}(Z_1, Z_2, Z_3, \dots, Z_D),  \bigotimes_{t=1}^D \Pois\left(\frac{(d-1)^t}{2t}\right)\right) &= o(1), \label{ineq:TV-Q}\\
        d_{\TV}\left(\sL_{\sP}(Z_1, Z_2, Z_3, \dots, Z_D),  \bigotimes_{t=1}^D \Pois\left(\mu_t\right)\right) &= o(1),\label{ineq:TV-P}
    \end{align}
    where $\mu_t \colonequals (1-\delta')^t \left(\nu_t - \frac{(d-1)^t}{2t}\right) + \frac{(d-1)^t}{2t}$, $\delta' \colonequals \frac{2r}{nd}$, $\nu_t \colonequals \frac{\tr(B^t)}{2t}$, and $B$ is the non-backtracking matrix of $H$. Note that $\delta' = \frac{2r}{nd} = \lceil \delta \frac{nd}{2} \rceil \frac{2}{nd} \ge \delta$ is bounded away from $0$. Conditioning on simplicity, i.e., $Z_1 = Z_2 = 0$, we get
    \begin{align*}
        d_{\TV}\left(\sL_{\sQ^S}(Z_3, \dots, Z_D),  \bigotimes_{t=3}^D \Pois\left(\frac{(d-1)^t}{2t}\right)\right) &= o(1),\\
        d_{\TV}\left(\sL_{\sP^S}(Z_3, \dots, Z_D),  \bigotimes_{t=3}^D \Pois\left(\mu_t\right)\right) &= o(1),
    \end{align*}
    which follows because both the probability of $\Pois((d-1)/2) \otimes \Pois((d-1)^2/4) = (0,0)$ and the probability of $\Pois(\mu_1) \otimes \Pois(\mu_2) = (0,0)$ are bounded away from 0.

    Let $c = \min\{c_1, c_2\}$. For $D \le c\log(n)$, consider an arbitrary symmetric polynomial $f \in \RR_{\sym}[Y]_{\le D}$ of degree at most $D$. By Lemma~\ref{lem:leaf-peeling}, there exists a polynomial $g \in \sK_{\le D}$ such that $f \equiv g$ modulo the constraints $S$. In other words, $f(Y)$ and $g(Y)$ takes the same value when $Y$ encodes a $d$-regular simple graph. By Lemma~\ref{lem:function-of-cycles}, there exists a function $P: \RR^{D-2} \to \RR$ such that
\begin{align*}
    g(Y) = P(Z_3, Z_4, \dots, Z_D)
\end{align*}
when $Y$ encodes a $d$-regular simple graph $G \in R_{\le 2D}^c$. The testing advantage of the PTF specified by $f$ can then be bounded as
\begin{align*}
    &\left|\Pr_{\sQ^S}(f(Y) \ge 0) - \Pr_{\sP^S}(f(Y) \ge 0)\right|\\
    &= \left|\Pr_{\sQ^S}(g(Y) \ge 0) - \Pr_{\sP^S}(g(Y) \ge 0)\right|\\
    &\le \left|\Pr_{\sQ^S}\left(P(Z_3, \dots, Z_D) \ge 0 \right) - \Pr_{\sP^S}\left(P(Z_3, \dots, Z_D) \ge 0\right)\right| + \Pr_{\sQ^S}(R_{\le 2D}) + \Pr_{\sP^S}(R_{\le 2D})\\
    &\le \left|\Pr_{\sQ^S}\left(P(Z_3, \dots, Z_D) \ge 0 \right) - \Pr_{\sP^S}\left(P(Z_3, \dots, Z_D) \ge 0\right)\right| + o(1)\quad && (\text{Using } \eqref{ineq:rare-event})\\
    &\le d_{\TV}\left(\sL_{\sQ^S}(Z_3, \dots, Z_D), \sL_{\sP^S}(Z_3, \dots, Z_D)\right) + o(1)\\
    &\le d_{\TV}\left(\bigotimes_{t=3}^D \Pois\left(\frac{(d-1)^t}{2t}\right),  \bigotimes_{t=3}^D \Pois(\mu_t)\right) + o(1) \quad && (\text{Using } \eqref{ineq:TV-Q} \text{ and }\eqref{ineq:TV-P})\\
    &\le 1 - K, \quad && (\text{Proposition~\ref{prop:Ramanujan-TV-bound}})
\end{align*}
for some $K = K(H,\delta') > 0$. Thus, we conclude that for Ramanujan $H$, there exists $c = c(H) > 0$ such that no symmetric PTF of degree at most $c\log(n)$ achieves strong detection between $\sQ^S$ and $\sP^S$.

The proof for the case of bipartite Ramanujan $H$ follows the same way and we omit the details.

\subsection{Proof of the Corollaries}

\begin{proof}[Proof of Corollary~\ref{cor:multigraph-hardness}]
    Assume for contradiction that some symmetric PTF of degree at most $D$ achieves strong detection between $\sQ$ and $\sP$. Then, there exists $f \in \RR_{\sym}[Y]_{\le D}$, not necessarily multilinear, such that
    \begin{align*}
        \Pr_{Y \sim \sQ}(f(Y) \ge 0) + \Pr_{Y \sim \sP}(f(Y) < 0) = o(1).
    \end{align*}
    By Corollary~\ref{cor:simple-prob}, the probabilities that $Y \sim \sQ$ is simple and that $Y \sim \sP$ is simple are both bounded away from $0$, and we get
    \begin{align*}
        &\Pr_{Y \sim \sQ^S}(f(Y) \ge 0) + \Pr_{Y \sim \sP^S}(f(Y) < 0)\\
        &= \frac{\Pr_{Y \sim \sQ}(f(Y) \ge 0, Y\text{ is simple})}{\Pr_{Y \sim \sQ}\left(Y\text{ is simple}\right)} + \frac{\Pr_{Y \sim \sP}(f(Y) < 0, Y\text{ is simple})}{\Pr_{Y \sim \sP}\left(Y\text{ is simple}\right)}\\
        &\le \frac{\Pr_{Y \sim \sQ}(f(Y) \ge 0)}{\Pr_{Y \sim \sQ}\left(Y\text{ is simple}\right)} + \frac{\Pr_{Y \sim \sP}(f(Y) < 0)}{\Pr_{Y \sim \sP}\left(Y\text{ is simple}\right)}\\
        &= o(1).
    \end{align*}
    This is a contradiction to Theorem~\ref{thm:ptf-hardness}, and thus the same PTF hardness applies to $\sQ$ and $\sP$, and similarly to $\sQ^b$ and $\sP^b$.
\end{proof}

\begin{proof}[Proof of Corollary~\ref{cor:low-deg-hardness}]
    Assume for contradiction that some $f \in \RR[Y]_{\le D}$ achieves strong separation between $\sQ^S$ and $\sP^S$. Then,
    \begin{align*}
        \left|\EE_{\sP^S}[f(Y)] - \EE_{\sQ^S}[f(Y)]\right| = \omega\left(\max\left\{\sqrt{\Var_{\sP^S}[f(Y)]}, \sqrt{\Var_{\sP^S}[f(Y)]}\right\}\right).
    \end{align*}
    Consider the symmetric polynomial $g \in \RR_{\sym}[Y]_{\le D}$ defined by
    \begin{align*}
        g(Y) = \frac{1}{n!} \sum_{\pi \in \Sym([n])} f(\pi \cdot Y).
    \end{align*}
    Note that since $\sP^S$ and $\sQ^S$ are both permutation-invariant, we have
    \begin{align*}
        \EE_{\sP^S}[g(Y)] &= \EE_{\sP^S}[f(Y)],\\
        \EE_{\sQ^S}[g(Y)] &= \EE_{\sQ^S}[f(Y)].
    \end{align*}
    By Jensen's inequality and permutation-invariance of $\sP^S$,
    \begin{align*}
        \Var_{\sP^S}[g(Y)] &= \EE_{\sP^S}\left[\left(g(Y) - \EE_{\sP^S}[g(Y)]\right)^2\right]\\
        &= \EE_{\sP^S}\left[\left(\frac{1}{n!} \sum_{\pi \in \Sym([n])}f(\pi \cdot Y) - \EE_{\sP^S}[f(Y)]\right)^2\right]\\
        &\le \frac{1}{n!} \sum_{\pi \in \Sym([n])} \EE_{\sP^S}\left[\left(f(\pi \cdot Y) - \EE_{\sP^S}[f(Y)]\right)^2\right] \quad && (\text{Jensen's inequality})\\
        &= \frac{1}{n!} \sum_{\pi \in \Sym([n])} \EE_{\sP^S}\left[\left(f(Y) - \EE_{\sP^S}[f(Y)]\right)^2\right] \quad && (\sP^S\text{ is permutation-invariant})\\
        &= \Var_{\sP^S}[f(Y)].
    \end{align*}
    Similarly, $\Var_{\sQ^S}[g(Y)] \le \Var_{\sQ^S}[f(Y)]$. Consequently,
    \begin{align*}
        &\left|\EE_{\sP^S}[g(Y)] - \EE_{\sQ^S}[g(Y)]\right| = \left|\EE_{\sP^S}[f(Y)] - \EE_{\sQ^S}[f(Y)]\right|\\ 
        &= \omega\left(\max\left\{\sqrt{\Var_{\sP^S}[f(Y)]}, \sqrt{\Var_{\sP^S}[f(Y)]}\right\}\right) = \omega\left(\max\left\{\sqrt{\Var_{\sP^S}[g(Y)]}, \sqrt{\Var_{\sP^S}[g(Y)]}\right\}\right).
    \end{align*}
    By Chebyshev's inequality, the symmetric PTF specified by $g(Y) - \frac{\EE_{\sP^S}[g(Y)] + \EE_{\sQ^S}[g(Y)]}{2} \in \RR_{\sym}[Y]_{\le D}$ achieves strong detection between $\sP^S$ and $\sQ^S$, contradicting that no symmetric PTF of degree at most $D$ achieves strong detection. We therefore conclude that no polynomial of degree at most $D$ achieves strong separation between $\sQ^S$ and $\sP^S$.
    
    The proof is identical for the bipartite versions.
\end{proof}

\section{Proof of Distribution of Short Cycle Counts}\label{sec:proof-cycle-counts}

In this section, we prove the convergence to independent Poissons for short cycle counts as stated in Theorem~\ref{thm:cycle-counts}. We need the following Chen-Stein method for Poisson approximation.

\begin{theorem}[{\cite[Theorem 2]{arratia1989two}}]\label{thm:chen-stein}
    Let $L \in \NN$, $I$ be a finite index set, and $I = I_1 \sqcup I_2 \sqcup \dots \sqcup I_L$ be a partition of $I$. Let $(X_{\alpha})_{\alpha \in I}$ be a collection of Bernoulli variables, with $p_{\alpha} \colonequals \EE[X_{\alpha}]$. Form a collection of random variables $(Z_{i})_{i=1}^L$ by
    \begin{align*}
        Z_i \colonequals \sum_{\alpha \in I_i} X_{\alpha}.
    \end{align*}
    Let $(N({\alpha}))_{\alpha \in I}$ be a collection of subsets of indices, where $N({\alpha}) \subseteq I$ for every $\alpha \in I$.
    
    If we denote
    \begin{align*}
        b_1 &\colonequals \sum_{\alpha \in I} \sum_{\beta \in N({\alpha})} p_{\alpha} p_{\beta},\\
        b_2 &\colonequals \sum_{\alpha \in I} \sum_{\substack{\beta \in N(\alpha):\\ \beta \ne \alpha}} \EE[X_{\alpha} X_{\beta}],\\
        b_3 &\colonequals \sum_{\alpha \in I} \EE\Big|\EE[X_{\alpha} - p_{\alpha} \vert \sigma(X_{\beta}: \beta \not\in N(\alpha))]\Big|,
    \end{align*}
    then
    \begin{align*}
        d_{\TV}\left(\sL(Z_1, Z_2, \dots, Z_L), \bigotimes_{i=1}^L \Pois(\nu_i)\right) &\le 2(2b_1 + 2b_2 + b_3),
    \end{align*}
    where $\nu_i \colonequals \sum_{\alpha \in I_i} p_{\alpha}$.
\end{theorem}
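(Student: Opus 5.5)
The statement is the multivariate Chen--Stein bound of Arratia, Goldstein and Gordon. My plan is to prove a stronger statement: a bound for approximating the whole indicator vector $X = (X_\alpha)_{\alpha \in I}$ by a Poisson process. Let $Y = (Y_\alpha)_{\alpha\in I}$ have independent coordinates $Y_\alpha \sim \Pois(p_\alpha)$. Then $\sum_{\alpha\in I_i} Y_\alpha \sim \Pois(\nu_i)$ independently over $i$. The map $x \mapsto (\sum_{\alpha \in I_i} x_\alpha)_{i=1}^L$ is deterministic, and total variation distance cannot increase under such a map. So it suffices to show
\begin{align*}
d_{\TV}(\sL(X), \sL(Y)) \le 2(2b_1+2b_2+b_3).
\end{align*}

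\textbf{Step 1: Stein equation.} I would use the generator approach with an immigration--death process on $\NN^I$. Its generator is
\begin{align*}
(\mathcal{A}h)(x) = \sum_{\alpha\in I} p_\alpha\,[h(x+e_\alpha)-h(x)] + \sum_{\alpha \in I} x_\alpha\,[h(x-e_\alpha)-h(x)],
\end{align*}
and its stationary law is $\sL(Y)$. Fix a test set $A \subseteq \NN^I$. Set
\begin{align*}
h_A(x) = -\int_0^\infty \big(\Pr_x(\xi_s\in A)-\Pr(Y\in A)\big)\,ds,
\end{align*}
where $\xi$ is the process started at $x$. Then $\mathcal{A}h_A(x) = \One_A(x)-\Pr(Y\in A)$.

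The key analytic input is the first-difference bound $|h_A(x+e_\alpha)-h_A(x)| \le 1$ for all $x$ and $\alpha$. To get it, I would couple the process started at $x$ with the one started at $x+e_\alpha$, differing only by one extra particle. That particle dies at rate $1$, after an $\mathrm{Exp}(1)$ time. The two indicator values can differ only while it is alive, so the difference is at most $\EE[\mathrm{Exp}(1)] = 1$. The factor $2$ in the final bound comes from converting $\sup_A |\Pr(X\in A)-\Pr(Y\in A)|$ and this crude bound into the stated constant, as in Arratia--Goldstein--Gordon.

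\textbf{Step 2: Expand the Stein identity.} Write $\Delta_\alpha h(x) = h(x+e_\alpha)-h(x)$. Since $X_\alpha\in\{0,1\}$, we have $X_\alpha[h(X-e_\alpha)-h(X)] = -X_\alpha\,\Delta_\alpha h(X-e_\alpha)$. Hence
\begin{align*}
\EE[\mathcal{A}h(X)] = \sum_\alpha \Big(p_\alpha\EE[\Delta_\alpha h(X)] - \EE[X_\alpha\Delta_\alpha h(X-e_\alpha)]\Big).
\end{align*}
For each $\alpha$, let $U_\alpha$ be $X$ with the coordinates in $N(\alpha)$ set to zero. Then $U_\alpha$ is $\sigma(X_\beta:\beta\notin N(\alpha))$-measurable. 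I would replace both $\Delta_\alpha h(X)$ and $\Delta_\alpha h(X-e_\alpha)$ by $\Delta_\alpha h(U_\alpha)$, telescoping one coordinate at a time. This produces three kinds of terms, each controlled by one of the $b$'s.
\begin{itemize}
\item In the first summand, $\Delta_\alpha h(X)$ and $\Delta_\alpha h(U_\alpha)$ differ by at most $2\sum_{\beta\in N(\alpha)}X_\beta$. Taking expectations gives a contribution of order $b_1$.
\item In the second summand, the error is at most $2X_\alpha\sum_{\beta\in N(\alpha),\beta\ne\alpha}X_\beta$. This gives $b_2$. (If $\alpha \in N(\alpha)$, the $\beta=\alpha$ term is already absorbed by the shift $X - e_\alpha$.)
\item The remaining main term is $\sum_\alpha \EE[(p_\alpha - X_\alpha)\Delta_\alpha h(U_\alpha)]$. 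Condition on $\sigma(X_\beta:\beta\notin N(\alpha))$ and use $|\Delta_\alpha h|\le 1$. This bounds it by $\sum_\alpha \EE|\EE[X_\alpha-p_\alpha\mid\sigma(X_\beta:\beta\notin N(\alpha))]|=b_3$.
\end{itemize}

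\textbf{Step 3: Bookkeeping.} Collecting the three bounds and taking the supremum over $A$ gives a bound of the form $C_1b_1+C_2b_2+C_3b_3$. The delicate part, and the one I expect to be the main obstacle, is getting exactly the constants $2(2b_1+2b_2+b_3)$. This requires care in two places. First, the telescoping must pay $2\|\Delta h\|$ per removed coordinate, using the second-difference estimate $|\Delta_\alpha h(x+e_\beta)-\Delta_\alpha h(x)|\le 2\|\Delta h\|_\infty$. Second, one must track exactly how the factor $2$ from the Stein-solution bound enters. If matching the constants proves fiddly, any bound of the form $O(b_1+b_2+b_3)$ already suffices for every later use in the paper, where each $b_i$ is shown to be $o(1)$.
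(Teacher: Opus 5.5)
Your proof is correct. It is essentially the Barbour immigration--death argument behind \cite[Theorem 2]{arratia1989two}; the paper itself gives no proof of this statement and only cites that result. Two remarks are worth making.

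\textbf{The constants in Step 3.} Step 3 is not actually an obstacle. Your three bullets already give, for every test set $A$,
\[
|\Pr(X\in A)-\Pr(Y\in A)|\le 2b_1+2b_2+b_3 .
\]
The three contributions are:
\begin{itemize}
\item the first summand costs $2\sum_{\beta\in N(\alpha)}p_\alpha p_\beta$;
\item the second costs $2\,\EE[X_\alpha\sum_{\beta\in N(\alpha),\beta\ne\alpha}X_\beta]$;
\item the main term is at most $b_3$, because $|\Delta_\alpha h_A|\le 1$.
\end{itemize}
Taking the supremum over $A$ gives $d_{\TV}(\sL(X),\sL(Y))\le 2b_1+2b_2+b_3$, which is half the stated bound. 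The outer factor $2$ in the statement does not come from the Stein solution. It comes from Arratia--Goldstein--Gordon measuring total variation as $\sup_{|f|\le 1}|\EE f(X)-\EE f(Y)|$, which is twice the $d_{\TV}$ used in the paper. So you do not need to fall back on an $O(b_1+b_2+b_3)$ bound.

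\textbf{The hypothesis $\alpha\in N(\alpha)$.} You should state $\alpha\in N(\alpha)$ as a hypothesis rather than as a parenthetical aside. Your second-summand estimate uses it to remove the $\beta=\alpha$ discrepancy between $X-e_\alpha$ and $U_\alpha$. The theorem as literally stated fails without it: take $I=\{\alpha\}$, $N(\alpha)=\emptyset$ and $p_\alpha=1$. Then $b_1=b_2=b_3=0$, but the distance between the point mass at $1$ and $\Pois(1)$ is $1-e^{-1}$. This hypothesis is part of the cited theorem, and it holds in the paper's application because $\stub(\alpha)\ne\emptyset$ forces $\alpha\in N(\alpha)$.
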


\paragraph{Proof of Theorem~\ref{thm:cycle-counts}}

We first work with the non-bipartite case. Suppose $H$ is a base $d$-regular graph on $k$ vertices potentially with half-loops. Let $M$ be its adjacency matrix, and $B$ be its non-backtracking matrix as in Definition~\ref{def:nb-matrix}.

Since $Z_t$ the number of length-$t$ cycles is invariant under vertex relabelling, instead of working with $\sP$ where the fiber partition of the vertex set $V = \bigsqcup_{i=1}^k V_i$ is a uniformly random equitable partition, we may consider a model $\Tilde{\sP}$ of noisy random lifts where the underlying fiber partition $V = \bigsqcup_{i=1}^k V_i$ is fixed. To be precise, let $\Tilde{\sP} = \Tilde{\sP}_n(H, \delta)$ be the distribution of $r$-noisy random lift of $H$ where $r = \lceil \delta \frac{dn}{2} \rceil$, whose fiber partition is fixed as $V = \bigsqcup_{i=1}^k V_i$ for $V_i \colonequals \{(i-1)m+1, (i-1)m+2, \dots, im\}$ with $m = \frac{n}{k}$. The distribution of cycle counts for $\Tilde{\sP}$ is thus the same as that for $\sP$. 

\paragraph{How we index the edges:}

Before we move on to applying the Chen-Stein method, we explain our indexing notation for the edges of the noisy random lift $G$. Recall
\begin{align*}
    E(H) &= E_1(H) \sqcup E_2(H),
\end{align*}
where $E_1(H)$ is the collection of the half-loops, and $E_2(H)$ is the collection of edges connecting distinct vertices. We further recall the set of directed edges $\vec{E}(H)$ discussed in Remark~\ref{rem:directed-edges}.

In a noisy random lift $G$, for every vertex $v \in V_i, i\in [k]$, we attach a stub $(v , \vec{e})$ to $v$ for every directed edge $\vec{e} \in \vec{E}(H)$ with $\tail(\vec{e}) = i$. To form the noiseless random lift, we independently draw a random bijection $\sigma_e: V_i \to V_j$ for every $e \in E_2(H)$ connecting distinct vertices $i\ne j$ and a random matching $\sigma_e: V_i \to V_i$ for every $e \in E_1(H)$ attached to $i$, and then add edges by connecting the stub-pairs 
\begin{align*}
    \{(v, (i,e)), (\sigma_e(v), (j,e))\}, \quad \text{ for every } e \in E_2(H) \text{ connecting } i\ne j, v \in V_i,\\
    \text{and } \{(v, (i,e)), (\sigma_e(v), (i,e))\}, \quad \text{ for every } e \in E_1(H) \text{ attached to } i, v \in V_i.
\end{align*}
The noisy random lift is then obtained by first deleting a random subset of $r$ edges, exposing $2r$ stubs, and then adding back $r$ edges by a uniformly random perfect matching on the exposed stubs. In the final graph of the noisy random lift, every edge is a stub-pair $\{(v_1, \vec{e}_1), (v_2, \vec{e}_2)\}$.

Now we explain the setup on which we apply the Chen-Stein method.

\paragraph{Setup:} Let $D \in \NN$ be a parameter to be set later, such that $D = o\left(n^{1/2}\right)$. For $1 \le t \le D$, let $I_t$ consist of local configurations of length-$t$ cycles. The precise definition of local configurations of cycles is as follows.
\begin{definition}[Local Configurations]
    A local configuration of a cycle of length $t$ is specified by:
    \begin{itemize}
        \item A sequence of $t$ distinct vertices, $(v_1, v_2, \dots, v_t)$, determined up to a cyclic permutation and reversal.
        \item A type $\in \{\originalretained, \rematched\}$ associated to the pair of consecutive vertices $(v_i, v_{i+1})$ for every $i \in [t]$, where we denote $v_{t+1} \colonequals v_1$.
        \item For every $\{v_i, v_{i+1}\}$ of type $\originalretained$, a color $e \in E(H)$.
        \item For every $\{v_i, v_{i+1}\}$ of type $\rematched$, two additional vertices $s_i$ and $t_i$ that are not necessarily distinct from $v_i$ and $v_{i+1}$. Assign types to $\{v_i, s_i\}$ and $\{v_{i+1}, t_i\}$ as $\originaldeleted$. Assign a color $e_s \in E(H)$ to $\{v_i, s_i\}$, and a color $e_t \in E(H)$ to $\{v_{i+1}, t_i\}$.
    \end{itemize}
\end{definition}

\begin{figure}
    \centering

\tikzset{every picture/.style={line width=0.75pt}} 

\begin{tikzpicture}[x=0.75pt,y=0.75pt,yscale=-1,xscale=1]

\draw    (160,64.5) -- (135,96) ;
\draw    (259,178.5) -- (223,201) ;
\draw    (247,60.5) -- (209,56) ;
\draw    (180,198.5) -- (150,176) ;
\draw    (135,96) -- (129,137) ;
\draw    (247,60.5) -- (274,96.5) ;
\draw    (274,96.5) -- (276,145.5) ;
\draw  [dash pattern={on 4.5pt off 4.5pt}]  (132,140) -- (82,165.5) ;
\draw  [fill={rgb, 255:red, 0; green, 0; blue, 0 }  ,fill opacity=1 ] (123.25,137) .. controls (123.25,133.82) and (125.82,131.25) .. (129,131.25) .. controls (132.18,131.25) and (134.75,133.82) .. (134.75,137) .. controls (134.75,140.18) and (132.18,142.75) .. (129,142.75) .. controls (125.82,142.75) and (123.25,140.18) .. (123.25,137) -- cycle ;
\draw  [fill={rgb, 255:red, 0; green, 0; blue, 0 }  ,fill opacity=1 ] (129.25,96) .. controls (129.25,92.82) and (131.82,90.25) .. (135,90.25) .. controls (138.18,90.25) and (140.75,92.82) .. (140.75,96) .. controls (140.75,99.18) and (138.18,101.75) .. (135,101.75) .. controls (131.82,101.75) and (129.25,99.18) .. (129.25,96) -- cycle ;
\draw  [fill={rgb, 255:red, 0; green, 0; blue, 0 }  ,fill opacity=1 ] (144.25,176) .. controls (144.25,172.82) and (146.82,170.25) .. (150,170.25) .. controls (153.18,170.25) and (155.75,172.82) .. (155.75,176) .. controls (155.75,179.18) and (153.18,181.75) .. (150,181.75) .. controls (146.82,181.75) and (144.25,179.18) .. (144.25,176) -- cycle ;
\draw  [fill={rgb, 255:red, 0; green, 0; blue, 0 }  ,fill opacity=1 ] (178,199.5) .. controls (178,196.32) and (180.57,193.75) .. (183.75,193.75) .. controls (186.93,193.75) and (189.5,196.32) .. (189.5,199.5) .. controls (189.5,202.68) and (186.93,205.25) .. (183.75,205.25) .. controls (180.57,205.25) and (178,202.68) .. (178,199.5) -- cycle ;
\draw  [fill={rgb, 255:red, 0; green, 0; blue, 0 }  ,fill opacity=1 ] (154.25,64.5) .. controls (154.25,61.32) and (156.82,58.75) .. (160,58.75) .. controls (163.18,58.75) and (165.75,61.32) .. (165.75,64.5) .. controls (165.75,67.68) and (163.18,70.25) .. (160,70.25) .. controls (156.82,70.25) and (154.25,67.68) .. (154.25,64.5) -- cycle ;
\draw  [fill={rgb, 255:red, 0; green, 0; blue, 0 }  ,fill opacity=1 ] (217.25,201) .. controls (217.25,197.82) and (219.82,195.25) .. (223,195.25) .. controls (226.18,195.25) and (228.75,197.82) .. (228.75,201) .. controls (228.75,204.18) and (226.18,206.75) .. (223,206.75) .. controls (219.82,206.75) and (217.25,204.18) .. (217.25,201) -- cycle ;
\draw  [fill={rgb, 255:red, 0; green, 0; blue, 0 }  ,fill opacity=1 ] (252.25,179.5) .. controls (252.25,176.32) and (254.82,173.75) .. (258,173.75) .. controls (261.18,173.75) and (263.75,176.32) .. (263.75,179.5) .. controls (263.75,182.68) and (261.18,185.25) .. (258,185.25) .. controls (254.82,185.25) and (252.25,182.68) .. (252.25,179.5) -- cycle ;
\draw  [fill={rgb, 255:red, 0; green, 0; blue, 0 }  ,fill opacity=1 ] (270.25,145.5) .. controls (270.25,142.32) and (272.82,139.75) .. (276,139.75) .. controls (279.18,139.75) and (281.75,142.32) .. (281.75,145.5) .. controls (281.75,148.68) and (279.18,151.25) .. (276,151.25) .. controls (272.82,151.25) and (270.25,148.68) .. (270.25,145.5) -- cycle ;
\draw  [fill={rgb, 255:red, 0; green, 0; blue, 0 }  ,fill opacity=1 ] (268.25,96.5) .. controls (268.25,93.32) and (270.82,90.75) .. (274,90.75) .. controls (277.18,90.75) and (279.75,93.32) .. (279.75,96.5) .. controls (279.75,99.68) and (277.18,102.25) .. (274,102.25) .. controls (270.82,102.25) and (268.25,99.68) .. (268.25,96.5) -- cycle ;
\draw  [fill={rgb, 255:red, 0; green, 0; blue, 0 }  ,fill opacity=1 ] (203.25,56) .. controls (203.25,52.82) and (205.82,50.25) .. (209,50.25) .. controls (212.18,50.25) and (214.75,52.82) .. (214.75,56) .. controls (214.75,59.18) and (212.18,61.75) .. (209,61.75) .. controls (205.82,61.75) and (203.25,59.18) .. (203.25,56) -- cycle ;
\draw  [fill={rgb, 255:red, 0; green, 0; blue, 0 }  ,fill opacity=1 ] (242.25,60.5) .. controls (242.25,57.32) and (244.82,54.75) .. (248,54.75) .. controls (251.18,54.75) and (253.75,57.32) .. (253.75,60.5) .. controls (253.75,63.68) and (251.18,66.25) .. (248,66.25) .. controls (244.82,66.25) and (242.25,63.68) .. (242.25,60.5) -- cycle ;
\draw  [dash pattern={on 4.5pt off 4.5pt}]  (147,176) -- (206,56) ;
\draw  [dash pattern={on 4.5pt off 4.5pt}]  (165.75,64.5) -- (107.75,50.5) ;
\draw  [dash pattern={on 4.5pt off 4.5pt}]  (324,190.5) -- (259,178.5) ;
\draw  [dash pattern={on 4.5pt off 4.5pt}]  (325,125) -- (275,150.5) ;
\draw  [dash pattern={on 4.5pt off 4.5pt}]  (223,199) -- (183.75,197.5) ;
\draw    (209,56) .. controls (207.64,57.93) and (206,58.21) .. (204.07,56.85) .. controls (202.14,55.5) and (200.5,55.78) .. (199.15,57.71) .. controls (197.79,59.64) and (196.15,59.92) .. (194.22,58.56) .. controls (192.29,57.21) and (190.65,57.49) .. (189.29,59.42) .. controls (187.94,61.35) and (186.3,61.63) .. (184.37,60.27) .. controls (182.44,58.92) and (180.8,59.2) .. (179.44,61.13) .. controls (178.09,63.06) and (176.45,63.34) .. (174.52,61.98) .. controls (172.59,60.63) and (170.95,60.91) .. (169.59,62.84) .. controls (168.23,64.77) and (166.59,65.05) .. (164.66,63.69) -- (160,64.5) -- (160,64.5) ;
\draw    (276,145.5) .. controls (276.69,147.75) and (275.91,149.23) .. (273.66,149.92) .. controls (271.41,150.61) and (270.63,152.09) .. (271.32,154.34) .. controls (272.01,156.59) and (271.23,158.07) .. (268.98,158.76) .. controls (266.73,159.45) and (265.95,160.93) .. (266.64,163.18) .. controls (267.33,165.43) and (266.55,166.9) .. (264.3,167.59) .. controls (262.05,168.28) and (261.27,169.76) .. (261.96,172.01) .. controls (262.65,174.26) and (261.87,175.74) .. (259.62,176.43) -- (258,179.5) -- (258,179.5) ;
\draw    (150,176) .. controls (147.74,175.33) and (146.95,173.86) .. (147.63,171.6) .. controls (148.31,169.34) and (147.52,167.87) .. (145.26,167.2) .. controls (143,166.52) and (142.21,165.05) .. (142.89,162.79) .. controls (143.57,160.53) and (142.78,159.06) .. (140.52,158.39) .. controls (138.26,157.72) and (137.47,156.25) .. (138.15,153.99) .. controls (138.83,151.73) and (138.04,150.26) .. (135.78,149.59) .. controls (133.52,148.91) and (132.73,147.44) .. (133.41,145.18) .. controls (134.09,142.92) and (133.3,141.45) .. (131.04,140.78) -- (129,137) -- (129,137) ;
\draw    (225,204.75) .. controls (223.22,206.29) and (221.56,206.17) .. (220.01,204.39) .. controls (218.47,202.61) and (216.81,202.49) .. (215.03,204.03) .. controls (213.25,205.57) and (211.59,205.45) .. (210.04,203.67) .. controls (208.49,201.89) and (206.83,201.77) .. (205.05,203.31) .. controls (203.27,204.85) and (201.61,204.73) .. (200.06,202.95) .. controls (198.52,201.17) and (196.86,201.05) .. (195.08,202.59) .. controls (193.3,204.13) and (191.64,204.01) .. (190.09,202.23) .. controls (188.54,200.45) and (186.88,200.33) .. (185.1,201.87) .. controls (183.32,203.41) and (181.66,203.29) .. (180.12,201.51) -- (180,201.5) -- (180,201.5) ;
\draw  [fill={rgb, 255:red, 0; green, 0; blue, 0 }  ,fill opacity=1 ] (102,48.5) .. controls (102,45.32) and (104.57,42.75) .. (107.75,42.75) .. controls (110.93,42.75) and (113.5,45.32) .. (113.5,48.5) .. controls (113.5,51.68) and (110.93,54.25) .. (107.75,54.25) .. controls (104.57,54.25) and (102,51.68) .. (102,48.5) -- cycle ;
\draw  [fill={rgb, 255:red, 0; green, 0; blue, 0 }  ,fill opacity=1 ] (77.25,163.75) .. controls (77.25,160.57) and (79.82,158) .. (83,158) .. controls (86.18,158) and (88.75,160.57) .. (88.75,163.75) .. controls (88.75,166.93) and (86.18,169.5) .. (83,169.5) .. controls (79.82,169.5) and (77.25,166.93) .. (77.25,163.75) -- cycle ;
\draw  [fill={rgb, 255:red, 0; green, 0; blue, 0 }  ,fill opacity=1 ] (319.25,125) .. controls (319.25,121.82) and (321.82,119.25) .. (325,119.25) .. controls (328.18,119.25) and (330.75,121.82) .. (330.75,125) .. controls (330.75,128.18) and (328.18,130.75) .. (325,130.75) .. controls (321.82,130.75) and (319.25,128.18) .. (319.25,125) -- cycle ;
\draw  [fill={rgb, 255:red, 0; green, 0; blue, 0 }  ,fill opacity=1 ] (318.25,190.5) .. controls (318.25,187.32) and (320.82,184.75) .. (324,184.75) .. controls (327.18,184.75) and (329.75,187.32) .. (329.75,190.5) .. controls (329.75,193.68) and (327.18,196.25) .. (324,196.25) .. controls (320.82,196.25) and (318.25,193.68) .. (318.25,190.5) -- cycle ;

\end{tikzpicture}
    \caption{An example of a local configuration of a cycle of length $11$: the solid edges are the edges of type $\originalretained$, the dashed edges are the edges of type $\originaldeleted$, and the squiggly edges are the edges of type $\rematched$.}
    \label{fig:placeholder}
\end{figure}

\begin{remark}
    Let $G \sim \Tilde{\sP}$ be a random lift. We say a local configuration of a length-$t$ cycle as in the above definition is realized in $G$ if:
\begin{itemize}
    \item Every pair $\{v_i, v_{i+1}\}$ is connected in $G$, where $v_{t+1} \colonequals v_1$.
    \item For every $\{u, v\}$ of type $\originalretained$ and of color $e \in E(H)$, the edge $\{(u,\vec{e}), (v,\iota(\vec{e}))\}$ is in the original noiseless lift and is not deleted in the random deletion process, where $\vec{e}$ is an directed edge corresponding to $e$.
    \item For every $\{u, v\}$ of type $\originaldeleted$ and of color $e \in E(H)$, the edge $\{(u,\vec{e}), (v,\iota(\vec{e}))\}$ is in the original noiseless and is later deleted in the random deletion process, where $\vec{e}$ is an directed edge corresponding to $e$.
    \item For every $\{v_i, v_{i+1}\}$ of type $\rematched$, after the prescribed edges $\{(v_i,\vec{e}_s), (s_i,\iota(\vec{e}_s))\}$ and \newline $\{(v_{i+1},\vec{e}_t), (t_i,\iota(\vec{e}_t))\}$, where $\vec{e}_s$ and $\vec{e}_t$ are directed edges corresponding to $e_s$ and $e_t$, are deleted from the noiseless random lift, the two exposed stubs are paired to form the edge $\{(v_i,\vec{e}_s), (v_{i+1},\vec{e}_t)\}$ in the rematching process.
\end{itemize}
    We point out that $\{v_i, v_{i+1}\}$ can both have type $\originaldeleted$ and $\rematched$, since it is possible for an edge to be first included in the original noiseless random lift, then deleted, and finally rematched after the deletion.

    We also note that while we use the phrase ``a local configuration is realized in $G \sim \Tilde{\sP}$'', the realization actually depends on more than just the final observed graph $G$, and needs the full information on the noiseless random lift before the random deletion, the deletion process, and the rematching process. In this section, we will always work with the full information of $G \sim \Tilde{\sP}$ and abuse the notation to simply say ``a local configuration is realized in $G$''.

    In plain language, a local configuration of a length-$t$ cycle describes the local information of how a length-$t$ cycle can be formed in $G \sim \Tilde{\sP}$. It specifies the sequence of vertices $(v_1, \dots, v_t)$ of the cycle, and whether each consecutive pair is added in the original noiseless lift that is not deleted ($\originalretained$) or paired in the rematching process after the deletion process ($\rematched$). 
    
    Moreover, for every $\{v_i, v_{i+1}\}$ of type $\rematched$ that should be paired in the rematching process, the configuration further specifies which edges $\{(v_i,\vec{e}_s), (s_i,\iota(\vec{e}_s))\}$ and $\{(v_{i+1},\vec{e}_t), (t_i,\iota(\vec{e}_t))\}$ should be deleted from the original noiseless lift, after which the two exposed stubs $(v_i,\vec{e}_s), (v_{i+1},\vec{e}_t)$ incident to $v_i$ and $v_{i+1}$ after the deletion should be paired in the rematching process.

    Finally, for every $\{u,v\}$ of type $\in \{\originalretained, \originaldeleted\}$, the configuration specifies a color $e \in E(H)$ so that the edge $\{(u,\vec{e}), (v,\iota(\vec{e}))\}$ should be in the noiseless random lift.
\end{remark}

    \begin{definition}[Color Classes]
        Recall in Definition~\ref{def:random-lift}, the edges in the noiseless random lift are naturally partitioned according to $E(H)$. For $e \in E(H)$, the color class $\sC_{e}$ is the collection of the edges $\{(u,\vec{e}), (v,\iota(\vec{e}))\}$ where $\vec{e} \in \vec{E}(H)$ is a directed edge corresponding to $e$, i.e., the edges added to the noiseless random lift by $\sigma_e$.
    \end{definition}

    \begin{remark}
        We further explain some notations we will use to reason about local configurations of cycles. For $\alpha$ a local configuration of a length-$t$ cycle, we let $\stub(\alpha)$ denote the collection of stubs used by any edge of $\alpha$ of type $\in \{\originalretained, \originaldeleted, \rematched\}$. We will use $\mathcal{O}(\alpha)$ to denote the collection of edges in $\alpha$ of type $\originalretained$ or $\originaldeleted$, $\mathcal{O}^{\ret}(\alpha)$ to denote the collection of edges in $\alpha$ of type $\originalretained$, $\mathcal{O}^{\del}(\alpha)$ to denote the collection of edges in $\alpha$ of type $\originaldeleted$, and $\mathcal{R}(\alpha)$ to denote the collection of edges in $\alpha$ of type $\rematched$. We will also use $\mathcal{O}^{\del, 2}(\alpha)$ to denote the collection of edges in $\alpha$ of type $\originaldeleted$, whose stubs are both incident to $\mathcal{R}(\alpha)$. Note that $\mathcal{O}^{\ret}(\alpha) \sqcup \mathcal{O}^{\del}(\alpha) = \mathcal{O}(\alpha)$, $\mathcal{O}^{\del,2}(\alpha) \subseteq \mathcal{O}^{\del}(\alpha)$, and $\mathcal{O}^{\del,2}(\alpha)$ might share some edges with $\mathcal{R}(\alpha)$.

        For $e \in E(H)$, we will use $\mathcal{O}_{e}(\alpha)$ to denote the collection of edges in $\mathcal{O}(\alpha)$ of color $e$, and let $c_{e}(\alpha) \colonequals |\mathcal{O}_{e}(\alpha)|$. We will use $c(\alpha) \colonequals |\mathcal{O}(\alpha)|$, $c^{\ret}(\alpha) \colonequals |\mathcal{O}^{\ret}(\alpha)|$, $c^{\del}(\alpha) \colonequals |\mathcal{O}^{\del}(\alpha)|$, $c^{\del,2}(\alpha) \colonequals |\mathcal{O}^{\del,2}(\alpha)|$, and $\rho(\alpha) \colonequals |\mathcal{R}(\alpha)|$.
    
        We call a local configuration $\alpha$ feasible, if $\alpha$ can ever be realized in $G \sim \Tilde{\sP}$. Note that in particular, the following holds for a feasible $\alpha$.
        \begin{itemize}
            \item All the edges in $\mathcal{O}(\alpha)$ use distinct stubs, and $\stub(\alpha)$ is exactly the collections of stubs incident to $\mathcal{O}(\alpha)$. Thus, $|\stub(\alpha)| = 2|\mathcal{O}(\alpha)| = 2e(\alpha)$.
            \item The edges in $\mathcal{O}(\alpha)$ are partitioned as $\mathcal{O}(\alpha) = \bigsqcup_{e \in E(H)} \mathcal{O}_{e}(\alpha)$.
            \item For every stub incident to an edge of type $\rematched$ in $\alpha$, it is also incident to an edge of type $\originaldeleted$ in $\alpha$.
            \item For every edge of type $\originaldeleted$ in $\alpha$, at least one of its stubs is incident to an edge of type $\rematched$. Every edge in $\mathcal{O}^{\del}(\alpha) \setminus \mathcal{O}^{\del, 2}(\alpha)$ has exactly one stub incident to $\mathcal{R}(\alpha)$, and every edge in $\mathcal{O}^{\del,2}(\alpha)$ has both stubs incident to $\mathcal{R}(\alpha)$. Since the collection of stubs incident to $\mathcal{R}(\alpha)$ is distributed between $\mathcal{O}^{\del,2}(\alpha)$ and $\mathcal{O}^{\del}(\alpha) \setminus \mathcal{O}^{\del,2}(\alpha)$, we have that $2 |\mathcal{O}^{\del,2}(\alpha)| + |\mathcal{O}^{\del}(\alpha) \setminus \mathcal{O}^{\del,2}(\alpha)| = 2|\mathcal{R}(\alpha)|$.
        \end{itemize}
        The quantities $c(\alpha), c_{e}(\alpha), c^{\ret}(\alpha), c^{\del}(\alpha), c^{\del,2}(\alpha)$, and $\rho(\alpha)$ satisfy the following relations.
        \begin{enumerate}
            \item $c(\alpha) = \sum_{e \in E(H)} c_{e}(\alpha)$.
            \item $c(\alpha) = c^{\ret}(\alpha) + c^{\del}(\alpha)$.
            \item $c^{\ret}(\alpha) + \rho(\alpha) = t$, if $\alpha$ is a feasible local configuration of a length-$t$ cycle.
            \item $c^{\del,2}(\alpha) = 2\rho(\alpha) - c^{\del}(\alpha)$.
            \item $\rho(\alpha) \le c^{\del}(\alpha) \le 2\rho(\alpha)$, $0 \le c^{\del,2}(\alpha) \le c^{\del}(\alpha)$.
        \end{enumerate}
        Relation $4$ and $5$ can be deduced as follows. Since $2 |\mathcal{O}^{\del,2}(\alpha)| + |\mathcal{O}^{\del}(\alpha) \setminus \mathcal{O}^{\del,2}(\alpha)| = 2|\mathcal{R}(\alpha)|$, we have
        $2c^{\del,2}(\alpha) + (c^{\del}(\alpha) - c^{\del,2}(\alpha)) = 2\rho(\alpha)$, and thus $c^{\del,2}(\alpha) = 2\rho(\alpha) - c^{\del}(\alpha)$. Since $0 \le c^{\del,2}(\alpha) \le c^{\del}(\alpha)$, we have $\rho(\alpha) = \frac{c^{\del}(\alpha) + c^{\del,2}(\alpha)}{2} \le c^{\del}(\alpha) = 2\rho(\alpha) - c^{\del,2}(\alpha) \le 2\rho(\alpha)$.
    \end{remark}

    For $1 \le t \le D$, let $I_t$ denote the collection of feasible local configurations of length-$t$ cycles, and let $I \colonequals \bigsqcup_{i=1}^t I_t$. For a feasible local configuration $\alpha \in I$, let $X_{\alpha}$ denote the indicator variable that $\alpha$ is realized in $G \sim \Tilde{\sP}$. For $1 \le t \le D$, define
    \begin{align*}
        Z_t \colonequals \sum_{\alpha \in I_t} X_{\alpha}.
    \end{align*}
    Since every length-$t$ cycle in $G \sim \Tilde{\sP}$ has a unique realized local configuration $\alpha \in I_t$, $Z_t$ counts the number of length-$t$ cycles in $G$.

    Let $N(\alpha) \subseteq I$ denote the collection of local configurations whose stubs are not disjoint from those of $\alpha$, i.e.,
    \begin{align*}
        N(\alpha) \colonequals \{\beta \in I: \stub(\beta) \cap \stub(\alpha) \ne \emptyset\}.
    \end{align*}

    Since we will apply Theorem~\ref{thm:chen-stein} to prove Theorem~\ref{thm:cycle-counts}, the proof is naturally split into four parts: bounding $b_1$ in Section~\ref{sec:b1-bound}, bounding $b_2$ in Section~\ref{sec:b2-bound}, bounding $b_3$ in Section~\ref{sec:b3-bound}, and computing the mean $\nu_t$ in Section~\ref{sec:mean-computation}.

    \subsection{Auxiliary Results}

    The following gives the exact probability that a feasible local configuration $\alpha$ is realized.
    \begin{fact}\label{fact:config-prob}
        Let $\alpha$ be a feasible local configuration of a length-$t$ cycle, and $m \colonequals \frac{n}{k}$. If $c^{\del}(\alpha) > r$, $\EE[X_{\alpha}] = 0$. If $c^{\del}(\alpha) \le r$, then
        \begin{align*}
            \EE[X_{\alpha}] &=\left(\prod_{e \in E_1(H) }\frac{(m-2c_{e}(\alpha)-1)!!}{(m-1)!!}\right) \left(\prod_{e \in E_2(H)} \frac{(m - c_e(\alpha))!}{m!}\right)\left( \frac{\binom{\frac{dn}{2} - c(\alpha)}{r - c^{\del}(\alpha)}}{\binom{\frac{dn}{2}}{r}} \frac{(2r-2\rho(\alpha)-1)!!}{(2r-1)!!} \right).
        \end{align*}
        The first two factors give the exact probability for edges in $\mathcal{O}(\alpha)$ to be paired in the noiseless random lift with their prescribed colors in $E(H)$ before the random deletion process. The last factor gives the exact probability for $\mathcal{O}^{\del}(\alpha)$ to be deleted and $\mathcal{O}(\alpha) \setminus \mathcal{O}^{\del}(\alpha)$ to be retained after the random deletion process, and for edges in $\mathcal{R}(\alpha)$ to be paired in the rematching process.
    \end{fact}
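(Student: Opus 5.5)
The statement is Fact~\ref{fact:config-prob}. The plan is to compute $\EE[X_\alpha]$ by decomposing the generation of $G \sim \Tilde{\sP}$ into three independent-in-sequence stages and multiplying the conditional probabilities: the noiseless lift, then the uniform $r$-subset deletion, then the uniform perfect matching on the exposed stubs.

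First, realization of $\alpha$ requires every edge in $\mathcal{O}(\alpha)$ to be present in the noiseless lift with its prescribed color. Feasibility guarantees these edges use distinct stubs, and the stubs of color $e$ only interact with $\sigma_e$. The permutations $\sigma_e$ are drawn independently, so the probability factors over $e \in E(H)$. For a half-loop $e$ we need $c_e(\alpha)$ prescribed disjoint pairs in a uniform perfect matching on $m$ stubs, which has probability $\frac{(m-2c_e-1)!!}{(m-1)!!}$. For $e \in E_2(H)$ we need $c_e(\alpha)$ prescribed disjoint pairs in a uniform bijection, which has probability $\frac{(m-c_e)!}{m!}$. Both are exactly the computations in Lemma~\ref{lem:matching-prob}, taken without the asymptotic simplification.

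Second, condition on the noiseless lift containing $\mathcal{O}(\alpha)$. The deleted set is a uniform $r$-subset of the $\frac{dn}{2}$ edges, and this choice is independent of the lift. We need the $c^{\del}(\alpha)$ edges of $\mathcal{O}^{\del}(\alpha)$ to be chosen and the $c(\alpha)-c^{\del}(\alpha)$ edges of $\mathcal{O}^{\ret}(\alpha)$ to be avoided. Counting $r$-subsets that contain $\mathcal{O}^{\del}$ and miss $\mathcal{O}^{\ret}$ gives $\binom{\frac{dn}{2}-c(\alpha)}{r-c^{\del}(\alpha)}\big/\binom{\frac{dn}{2}}{r}$. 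This count is $0$ when $c^{\del}(\alpha) > r$, which gives the first claim.

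Third, condition on the deletions. The $2r$ exposed stubs are matched uniformly, independently of everything before. The $\rho(\alpha)$ rematched edges are pairs of stubs, all of which are exposed; this holds because each such stub lies on a deleted edge of $\alpha$. We must also check that these pairs are disjoint. Disjointness follows because the stubs in $\stub(\alpha)$ are distinct across the $\rho(\alpha)$ rematched edges: each rematched stub belongs to a distinct $\originaldeleted$ edge end. The probability that the $\rho(\alpha)$ disjoint pairs all appear in a uniform perfect matching on $2r$ points is $\frac{(2r-2\rho-1)!!}{(2r-1)!!}$.

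Multiplying the three conditional probabilities gives the formula. Finally, $X_\alpha=1$ is exactly the intersection of these three events: the cycle adjacency $\{v_i,v_{i+1}\}$ in $G$ is then automatically witnessed by the retained or rematched edges. The only delicate point is the bookkeeping in this last check and in the disjointness of the rematched stub-pairs, which is where feasibility is used. No real obstacle is expected.
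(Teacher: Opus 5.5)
Your proposal is correct and is essentially the paper's own justification: the Fact is stated with exactly this three-stage factorization (independent color-class pairings in the noiseless lift, a uniform $r$-subset deletion, then a uniform perfect matching on the $2r$ exposed stubs), multiplying the three conditional probabilities. The one step worth phrasing more carefully is the stub-disjointness of the $\rho(\alpha)$ rematched pairs: it comes directly from feasibility (in any realization the rematched edges form a matching and the $t$ cycle edges are distinct), not from the deleted-edge bookkeeping you cite.
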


    We may upper bound the probability that $\alpha$ is realized as follows.

    \begin{proposition} \label{prop:p-alpha-bound}
        Let $\alpha \in I_t$ be a feasible local configuration, where $t = o(n^{1/2})$. Then, there exists constant $K_1 = K_1(H) > 0$ such that
        \begin{align*}
            p_{\alpha} = \EE[X_{\alpha}] \le \left(\frac{K_1}{n}\right)^{t + c^{\del}(\alpha)}.
        \end{align*}
    \end{proposition}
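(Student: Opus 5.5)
We bound each of the three factors of Fact~\ref{fact:config-prob} separately and then multiply. The exponent $t + c^{\del}(\alpha)$ is explained by the identity $c(\alpha) + \rho(\alpha) = c^{\ret}(\alpha) + c^{\del}(\alpha) + \rho(\alpha) = t + c^{\del}(\alpha)$, which follows from relations 2 and 3 of the preceding remark. So the target is a factor of order $1/n$ for every edge of $\mathcal{O}(\alpha)$ and another for every edge of $\mathcal{R}(\alpha)$.

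First, the lift factors. Since $c(\alpha) \le 2t$ (we have $c^{\ret}(\alpha) \le t$ and $c^{\del}(\alpha) \le 2\rho(\alpha) \le 2t$, so even $3t$), we get $c_e(\alpha) = o(m)$. Lemma~\ref{lem:matching-prob} then bounds each factor by $((1+o(1))/m)^{c_e(\alpha)}$. Taking the product over $e \in E(H)$ and using $m = n/k$ gives at most $(2k/n)^{c(\alpha)}$.

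Second, the deletion factor. If $c^{\del}(\alpha) > r$ then $p_\alpha = 0$, so assume $c^{\del}(\alpha) \le r$. Write $N \colonequals \frac{dn}{2}$, $c \colonequals c(\alpha)$ and $c' \colonequals c^{\del}(\alpha)$. Then
\begin{align*}
\frac{\binom{N-c}{r-c'}}{\binom{N}{r}} = \frac{r(r-1)\cdots(r-c'+1)\cdot (N-r)(N-r-1)\cdots(N-r-(c-c')+1)}{N(N-1)\cdots(N-c+1)} \le \frac{r^{c'}}{N^{c'}}(1+o(1))^{c},
\end{align*}
because the retained part is at most $1$ per term once $c = o(N)$. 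Since $c \le 3t = o(n^{1/2})$, the $(1+o(1))^c$ correction is harmless; it is bounded by $2^c$, say.

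Third, the rematching factor. We have $\frac{(2r-2\rho-1)!!}{(2r-1)!!} = \prod_{j=0}^{\rho-1}\frac{1}{2r-2j-1}$. Combine this with the $r^{c'}$ from the deletion factor, using $c' \ge \rho$. Pair $\rho$ of the numerator factors $r-i$ with the denominators $2r-2i-1$; each such ratio is at most $1$ as long as $r - i \ge 1$. The remaining $c' - \rho$ factors of $r$ are bounded by $r \le N$. This leaves
\begin{align*}
\frac{r^{\underline{c'}}}{N^{c'}} \cdot \frac{(2r-2\rho-1)!!}{(2r-1)!!} \lesssim N^{-c'} \cdot N^{c'-\rho}\cdot \frac{1}{\text{?}}.
\end{align*}
This loses a $1/n$ per rematched edge, so the pairing must be done more carefully, and this is the main obstacle.

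The fix is to keep one factor of $1/(2r)$ per rematched edge. The honest bound is
\begin{align*}
\prod_{j<\rho}\frac{1}{2r-2j-1} \le \left(\frac{1+o(1)}{2r}\right)^{\rho},
\end{align*}
valid when $\rho = o(r)$. Combined with $r^{\underline{c'}} \le r^{c'}$ and $c' \ge \rho$, the product of the deletion and rematching factors is at most
\begin{align*}
(1+o(1))^{c}\, N^{-c'}\, r^{c'-\rho}\, 2^{-\rho} \le 2^{c}\, N^{-c'}\, N^{c'-\rho} = 2^c N^{-\rho}.
\end{align*}
Together with the lift bound $(2k/n)^{c}$, this gives $p_\alpha \le (K_1/n)^{c+\rho}$ for a suitable $K_1 = K_1(H)$, which is the claim.

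A separate case arises when $r$ is small, so that $\rho = o(r)$ fails. Then $\rho \le c' \le r$ forces $r \le 3t = o(n^{1/2})$. In that regime we instead bound directly: $r^{\underline{c'}}/N^{c'} \le (r/N)^{c'}$ and $\prod_{j<\rho}(2r-2j-1)^{-1} \le 1$. With $r \le N$ this gives $(r/N)^{c'} \le N^{-\rho}(r/N)^{c'-\rho}\, r^{\rho} N^{0}$, which needs $r^{\rho}(r/N)^{c'-\rho}$ to be absorbed. Since $r^{\rho}/r^{\rho}$ cancels against the $\prod_{j<\rho}(2r-2j-1)^{-1} \ge$ careful term, we should just use the exact expression $r^{\underline{c'}}/(2r-1)!!/(2r-2\rho-1)!! \le r^{\underline{\rho}}\cdot r^{c'-\rho}/\prod(2r-2j-1) \le r^{c'-\rho}$. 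This yields $N^{-c'}r^{c'-\rho} \le N^{-\rho}$ uniformly, so no case split is actually needed.
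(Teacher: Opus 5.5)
Your argument is correct and, with the false start and the case split removed, it is the paper's proof. Bound the lift factors by $(O(1)/n)^{c(\alpha)}$ and the deletion ratio by $2^{c(\alpha)}\, r^{\underline{c^{\del}(\alpha)}}/(dn/2)^{c^{\del}(\alpha)}$; absorb the rematching factor through $r^{\underline{c^{\del}(\alpha)}}\cdot\frac{(2r-2\rho(\alpha)-1)!!}{(2r-1)!!}\le r^{c^{\del}(\alpha)-\rho(\alpha)}$ (the paper writes this as $\frac{(2r-2\rho-1)!!}{(2r-1)!!}\le\frac{(r-\rho)!}{r!}$); then use $r\le dn/2$ and $c(\alpha)+\rho(\alpha)=t+c^{\del}(\alpha)$. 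In a final writeup keep only this uniform bound from your last paragraph, and correct the garbled expression $r^{\underline{c'}}/(2r-1)!!/(2r-2\rho-1)!!$, which should be a product with $(2r-2\rho-1)!!/(2r-1)!!$.
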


    \begin{proof}
        For $\alpha \in I_t$ where $t = o(n^{1/2})$, we have \[\rho(\alpha) \le t = o(n^{1/2}), \quad c^{\del}(\alpha) \le 2\rho(\alpha) = o(n^{1/2}), \quad c(\alpha) = c^{\ret}(\alpha) + c^{\del}(\alpha) = o(n^{1/2}).\] 
    By Fact~\ref{fact:config-prob}, if $c^{\del}(\alpha) > r$, then $p_{\alpha} = 0$. If $c^{\del}(\alpha) \le r$,
    since $m = \frac{n}{k} = \Omega(n)$, for a constant $K = K(H) > 0$ the following holds
    \begin{align*}
        &p_{\alpha} = \EE[X_{\alpha}]\\
        &= \left(\prod_{e \in E_1(H) }\frac{(m-2c_{e}(\alpha)-1)!!}{(m-1)!!}\right) \left(\prod_{e \in E_2(H)} \frac{(m - c_e(\alpha))!}{m!}\right)\left( \frac{\binom{\frac{dn}{2} - c(\alpha)}{r - c^{\del}(\alpha)}}{\binom{\frac{dn}{2}}{r}} \frac{(2r-2\rho(\alpha)-1)!!}{(2r-1)!!} \right)\\
        &\le \left(\frac{K}{n}\right)^{c(\alpha)} \frac{\frac{r!}{(r-c^{\del}(\alpha))!} \frac{\left(\frac{dn}{2} - r\right)!}{\left(\frac{dn}{2} - r - (c(\alpha) - c^{\del}(\alpha)))\right)}}{\frac{\left(\frac{dn}{2}\right)!}{\left(\frac{dn}{2} - c(\alpha)\right)!} } \frac{(2r-2\rho(\alpha)-1)!!}{(2r-1)!!}\\
        &\le \left(\frac{2K}{n}\right)^{c(\alpha)} \frac{\left(\frac{dn}{2} - r\right)^{c(\alpha) - c^{\del}(\alpha)}}{\left(\frac{dn}{2}\right)^{c(\alpha)}} \frac{r!}{(r-c^{\del}(\alpha))!}\frac{(r-\rho(\alpha))!}{r!}\\
        &\le \left(\frac{2K}{n}\right)^{c(\alpha)} \frac{\left(\frac{dn}{2} - r\right)^{c(\alpha) - c^{\del}(\alpha)} r^{c^{\del}(\alpha) - \rho(\alpha)}}{\left(\frac{dn}{2}\right)^{c(\alpha)}}\\
        &\le \left(\frac{2K}{n}\right)^{c(\alpha)} \frac{1}{\left(\frac{dn}{2}\right)^{\rho(\alpha)}}.
    \end{align*}
    Since
    \begin{align*}
        c(\alpha) &= c^{\ret}(\alpha) + c^{\del}(\alpha) = (t - \rho(\alpha)) + c^{\del}(\alpha),
    \end{align*}
    we conclude that for some constant $K_1 = K_1(H) > 0$,
    \begin{align}
        p_{\alpha} \le \left(\frac{K_1}{n}\right)^{t+c^{\del}(\alpha)}, \label{ineq:p-bound}
    \end{align}
    regardless of whether $c^{\del}(\alpha) \le r$.
    \end{proof}

    Next, we establish a simple bound on the sum of the probabilities of $\alpha \in I$ that we will make repeated use of.
    \begin{proposition}\label{prop:p-sum-bound}
        For $D = o(n^{1/2})$, there exists a constant $K_2 = K_2(H) > 0$ such that
        \begin{align*}
            \sum_{\alpha \in I} p_{\alpha} \le 2(D+1)\sum_{t=1}^D K_2^t.
        \end{align*}
    \end{proposition}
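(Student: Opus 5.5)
We bound $\sum_{\alpha\in I}p_\alpha=\sum_{t=1}^D\sum_{\alpha\in I_t}p_\alpha$ by combining Proposition~\ref{prop:p-alpha-bound} with a count of feasible local configurations, organized by the parameters $t$ and $c^{\del}(\alpha)$. Fix $t\le D$ and $j\colonequals c^{\del}(\alpha)$. By Proposition~\ref{prop:p-alpha-bound}, every such $\alpha$ has $p_\alpha\le (K_1/n)^{t+j}$. It then suffices to show that the number of feasible $\alpha\in I_t$ with $c^{\del}(\alpha)=j$ is at most $(K'n)^{t+j}$ for a constant $K'=K'(H)$. Summing over $j$ then gives a contribution $(K_1K')^{t+j}$. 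Since $0\le j\le 2\rho(\alpha)\le 2t$, there are at most $2t+1\le 2(D+1)$ values of $j$. Each term is at most $(K_1K')^{3t}$ once we assume $K_1K'\ge 1$, so we may take $K_2\colonequals (\max\{1,K_1K'\})^3$. This yields exactly the claimed form $2(D+1)\sum_{t=1}^D K_2^t$.

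The count of configurations goes as follows. The cyclic vertex sequence $(v_1,\dots,v_t)$ has at most $n^t$ choices. For each consecutive pair we choose a type (2 ways). For an $\originalretained$ edge we choose a color in $E(H)$, at most $kd$ choices.

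For a $\rematched$ pair $\{v_i,v_{i+1}\}$ we must choose the colors $e_s,e_t$, at most $(kd)^2$ choices, and the partner vertices $s_i,t_i$. The key observation is that feasibility forces the $\originaldeleted$ edges to use distinct stubs. It also forces the number of distinct $\originaldeleted$ edges to be exactly $j$. We account for this in two steps:
\begin{itemize}
\item \emph{New deleted edges.} Each of the $j$ distinct deleted edges whose far endpoint is a new vertex contributes a factor of at most $n$ for choosing that endpoint.
\item \emph{Shared deleted edges.} When a deleted edge lies in $\mathcal{O}^{\del,2}(\alpha)$, it is shared by the two stubs of rematched edges. Its other endpoint is then determined by the other rematched stub, so one only pays a choice among at most $2t$ stub positions, and $2t\le 2^{2t}$.
\end{itemize}
Hence the number of choices for the $s_i,t_i$ data is at most $n^{j}\cdot C^{t}$ for a constant $C$, by relation 4 of the remark. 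Altogether the count is at most $n^{t+j}(2kd)^{O(t)}C^t\le (K'n)^{t+j}$, where we used $j\ge 0$ to absorb the constants.

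The main obstacle is the bookkeeping in the rematched case. We need each $s_i,t_i$ that coincides with an existing vertex, or that is shared through $\mathcal{O}^{\del,2}$, to cost only a constant, or an exponential-in-$t$ factor, rather than a factor of $n$. Otherwise the count would exceed $n^{t+j}$ and fail to cancel the $n^{-(t+j)}$ from Proposition~\ref{prop:p-alpha-bound}. To handle this, I would encode each $\rematched$ pair by a label in a constant-size alphabet together with, for each endpoint, either the \emph{new vertex} option (cost $n$, counted against $j$) or a pointer to one of at most $t$ positions (cost $t\le 2^t$).
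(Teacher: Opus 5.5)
Your architecture is the paper's: count feasible $\alpha\in I_t$ by their parameters, multiply by $p_\alpha\le (K_1/n)^{t+c^{\del}(\alpha)}$ from Proposition~\ref{prop:p-alpha-bound}, and sum. You replace the paper's finer count $(2dn)^{t+2f-2h}(2h)^{4h-2f}$, which yields the decaying factor $(h^2/n)^{2h-f}$, with a cruder per-$j$ bound $(K'n)^{t+j}$ and then count the at most $2t+1$ values of $j$. That substitution is fine for the stated form.

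The gap is in the step you yourself call the crux. You pay for each edge of $\mathcal{O}^{\del,2}(\alpha)$ a pointer cost $2t\le 2^{2t}$, and for each endpoint coinciding with an existing vertex a cost $t\le 2^t$. You then absorb these into $C^t$. But these costs multiply. Since $c^{\del,2}(\alpha)=2\rho(\alpha)-c^{\del}(\alpha)$ can be as large as $t$, the product is $(2t)^{\Theta(t)}$ or $2^{\Theta(t^2)}$, not $C^t$.

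This is not an artifact of your encoding. Take $H$ a single vertex with half-loops, $t$ even, and all $t$ cycle edges $\rematched$ with stub type $e_1$ at $v_i$ and $e_2$ at $v_{i+1}$. Then there are $((t-1)!!)^2$ ways for the deleted edges to pair the rematched stubs among themselves, for each vertex sequence. So the claimed bound $n^jC^t$ on the $s_i,t_i$ data does not follow from what you wrote.

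Your ``main obstacle'' paragraph also has the accounting backwards. Charging a factor of up to $n$ once per deleted edge is harmless. What must be avoided is charging a shared edge twice, once from each of its rematched stubs. That would give $n^{2\rho(\alpha)}$, and $2\rho(\alpha)$ can exceed $j$.

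The fix is to pay these costs out of the $n^j$ budget rather than out of $C^t$. Write $h=\rho(\alpha)$.

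\begin{itemize}
\item Only $c^{\del}(\alpha)-c^{\del,2}(\alpha)=2j-2h$ deleted edges have a far stub outside the rematched stubs. Charge $nd$ for each, whether or not that stub sits at a cycle vertex; no pointers are needed.
\item Choosing which rematched stubs these are costs $\binom{2h}{2j-2h}\le 4^t$.
\item Pairing the remaining $4h-2j$ rematched stubs costs $(4h-2j-1)!!\le (2t)^{2h-j}$.
\end{itemize}

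Hence the $s_i,t_i$ data number at most $4^t d^{2t}\, n^{2j-2h}(2t)^{2h-j}\le (4d^2)^t n^j$. The last step uses $2t\le n$, which follows from $t=o(n^{1/2})$, once for each of the $2h-j$ shared edges. This is exactly the paper's count, and its factor $(h^2/n)^{2h-f}$ records the same saving. With this correction your argument goes through.
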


    \begin{proof}
        For a pair of $(f,h)$ such that $0 \le h \le t$ and $h \le f \le 2h$ \footnote{Note that the restrictions $0 \le h \le t$ and $h \le f \le 2h$ are enforced because for any feasible $\alpha \in I_t$, $0 \le \rho(\alpha) \le t$ and $\rho(\alpha) \le c^{\del}(\alpha) \le 2\rho(\alpha)$.}, the number of $\alpha \in I_t$ with $f = c^{\del}(\alpha)$ and $h = \rho(\alpha)$ can be bounded by
    \begin{align*}
        &n^t d^{t} \cdot \binom{t}{h} \cdot \binom{2h}{2f-2h} \cdot (nd)^{2f - 2h} \cdot (4h-2f-1)!! \\
        &\le (2d n)^{t + 2f-2h} \frac{(2h)!(4h-2f-1)!!}{(2f-2h)!(4h-2f)!}\\
        &\le (2d n)^{t + 2f-2h} (2h)^{4h-2f}, \numberthis \label{ineq:count-bound}
    \end{align*}
    since
    \begin{itemize}
        \item $n^t d^{t}$ bounds the number of choices for the $t$ cycle vertices and the color choices $e \in E(H)$ for the cycle edges of type $\originalretained$,
        \item $\binom{t}{h} = \binom{t}{\rho(\alpha)}$ bounds the number of choices of the cycle edges of the type $\rematched$,
        \item $\binom{2h}{2f-2h} = \binom{2\rho(\alpha)}{2(c^{\del}(\alpha) - \rho(\alpha))} = \binom{2\rho(\alpha)}{c^{\del}(\alpha) - c^{\del,2}(\alpha)}$ bounds the number of choices of the stubs of $\mathcal{R}(\alpha)$ that are incident to $\mathcal{O}^{\del}(\alpha) \setminus \mathcal{O}^{\del,2}(\alpha)$,
        \item $(nd)^{2f-2h} = (nd)^{c^{\del}(\alpha) - c^{\del,2}(\alpha)}$ bounds the number of choices of stubs that are paired to the $2f-2h$ stubs of $\mathcal{R}(\alpha)$ by $\mathcal{O}^{\del}(\alpha) \setminus \mathcal{O}^{\del,2}(\alpha)$,
        \item and finally, $(4h-2f-1)!! = (2\rho(\alpha) - (c^{\del}(\alpha) - c^{\del,2}(\alpha))-1)!!$ bounds the number of choices of $\mathcal{O}^{\del,2}(\alpha)$.
    \end{itemize}

    Combining \eqref{ineq:count-bound} and Proposition~\ref{prop:p-alpha-bound}, we may bound
    \begin{align*}
        \sum_{\alpha \in I} p_{\alpha} &= \sum_{t=1}^D \sum_{\substack{0 \le h \le t,\\ h \le f \le 2h}} \sum_{\substack{\alpha \in I_t:\\ f = c^{\del}(\alpha),\\
        h = \rho(\alpha)}} p_{\alpha}\\
        &\le \sum_{t=1}^D \sum_{\substack{0 \le h \le t,\\ h \le f \le 2h}} (2d n)^{t + 2f-2h} (2h)^{4h-2f}\left(\frac{K_1}{n}\right)^{t+f}\\
        &\le \sum_{t=1}^D K_2^t\sum_{\substack{0 \le h \le t,\\ h \le f \le 2h}} \left(\frac{h^2}{n}\right)^{2h-f}
        \intertext{for some constant $K_2 = K_2(H) > 0$ since $t+2f-2h, 4h-2f$, and $t+f$ are all at most $3t$,}
        &\le \sum_{t=1}^D K_2^t\sum_{j=0}^\infty (t+1)\left(\frac{t^2}{n}\right)^{j}\\
        &\le 2(D+1) \sum_{t=1}^D K_2^t. \numberthis \label{ineq:p-sum-bound}
    \end{align*}
    \end{proof}

    \subsection{Bound for $b_1$} \label{sec:b1-bound}

    We may upper bound
    \begin{align*}
        b_1 &= \sum_{\alpha \in I} \sum_{\beta \in N(\alpha)} p_{\alpha} p_{\beta}\\
        &\le \sum_{\alpha \in I} p_{\alpha} \sum_{x \in \stub(\alpha)} \sum_{\substack{\beta \in I:\\ x\in \stub(\beta)}} p_{\beta}.
    \end{align*}

    Similar to the proof of Proposition~\ref{prop:p-sum-bound}, for a fixed stub $x$ and a pair $(f,h)$ such that $0 \le h \le t$ and $h \le f \le 2h$, the number of $\alpha \in I_t$ with $f = c^{\del}(\alpha)$, $h = \rho(\alpha)$, and $x \in \stub(\alpha)$ can be bounded by
    \begin{align*}
        &n^{t-1} d^{t} \cdot \binom{t}{h} \cdot \binom{2h}{2f-2h} \cdot (nd)^{2f - 2h} \cdot (4h-2f-1)!!\\
        &\quad+  n^{t} d^{t} \cdot \binom{t}{h} \cdot \binom{2h}{2f-2h} \cdot (nd)^{2f - 2h-1} \cdot (4h-2f-1)!!\\
        &\le 2(2d n)^{t + 2f-2h-1} (2h)^{4h-2f}, \numberthis \label{ineq:count-bound-fixed-stub}
    \end{align*}
    where the first term bounds the situation when the stub $x$ is incident to the $t$ cycle edges, and the second term bounds the situation when the stub $x$ is incident to $\mathcal{O}^{\del}(\alpha) \setminus \mathcal{O}^{\del,2}(\alpha)$ outside of the $2t$ cycle stubs.

    Combining \eqref{ineq:count-bound-fixed-stub} and Proposition~\ref{prop:p-alpha-bound}, for any fixed stub $x$ we may bound
    \begin{align*}
        \sum_{\substack{\alpha \in I:\\ x \in \stub(\alpha)}} p_{\alpha} &= \sum_{t=1}^D \sum_{\substack{0 \le h \le t,\\ h \le f \le 2h}} \sum_{\substack{\alpha \in I_t:\\x \in \stub(\alpha),\\ f = c^{\del}(\alpha),\\
        h = \rho(\alpha)}} p_{\alpha}\\
        &\le \sum_{t=1}^D \sum_{\substack{0 \le h \le t,\\ h \le f \le 2h}} 2(2d n)^{t + 2f-2h-1} (2h)^{4h-2f}\left(\frac{K_1}{n}\right)^{t+f}\\
        &\le 2\sum_{t=1}^D \frac{K_2^t}{n}\sum_{\substack{0 \le h \le t,\\ h \le f \le 2h}} \left(\frac{h^2}{n}\right)^{2h-f}\\
        &\le 4(D+1) \frac{\sum_{t=1}^D K_2^t}{n}. \numberthis \label{ineq:p-sum-bound-fixed-stub}
    \end{align*}

    Finally, since for $\alpha \in I_t$,
    \begin{align*}
        |\stub(\alpha)| \le 2|\mathcal{O}(\alpha)| = 2c(\alpha) = 2(t - \rho(\alpha) + c^{\del}(\alpha)) \le 2(t + \rho(\alpha))  \le 4t \le 4D,
    \end{align*}
    we get\begin{align*}
        b_1 &\le \sum_{\alpha \in I} p_{\alpha} \sum_{x \in \stub(\alpha)} \sum_{\substack{\beta \in I:\\ x\in \stub(\beta)}} p_{\beta}\\
        &\le 4D \sum_{\alpha \in I} p_{\alpha} \cdot 4(D+1) \frac{\sum_{t=1}^D K_2^t}{n} && \quad (\text{Using }\eqref{ineq:p-sum-bound-fixed-stub})\\
        &\le 4D\cdot 4(D+1) \sum_{t=1}^D K_2^t \cdot 2(D+1) \frac{\sum_{t=1}^D K_2^t}{n} && \quad (\text{Using Proposition~\ref{prop:p-sum-bound}})\\
        &= \frac{32D(D+1)^2}{n} \left(\sum_{t=1}^D K_2^t\right)^2,
    \end{align*}
    which is $o(1)$ provided that $D \le c_1\log(n)$ for a constant $c_1 = c_1(H) > 0$.

    \subsection{Bound for $b_2$} \label{sec:b2-bound}

    Consider a pair of feasible local configurations $\alpha \in I_t$ and $\beta \in I_s$. Then, $\EE[X_{\alpha} X_{\beta}]$ is the probability that both configurations $\alpha$ and $\beta$ are realized in $G \sim \Tilde{\sP}$. We say a pair of feasible local configurations are compatible if they can be simultaneously realized. Note that if $\alpha, \beta$ are compatible, then
    \begin{itemize}
        \item $\mathcal{O}(\alpha)$ and $\mathcal{O}(\beta)$ are compatible in the sense that every edge of $\mathcal{O}(\alpha)$ and every edge of $\mathcal{O}(\beta)$ either use disjoint stubs, or are identical. Moreover, if an edge belongs to both $\mathcal{O}(\alpha)$ and $\mathcal{O}(\beta)$, then they receive the same type $\in \{\originalretained, \originaldeleted\}$ and the same color $e \in E(H)$.
        \item $\mathcal{R}(\alpha)$ and $\mathcal{R}(\beta)$ are compatible in the sense that every edge of $\mathcal{R}(\alpha)$ and every edge of $\mathcal{R}(\beta)$ either use disjoint stubs, or are identical.
    \end{itemize}

    For a pair of compatible local configurations $(\alpha, \beta)$, We will denote $\mathcal{O}(\alpha \cup \beta) \colonequals \mathcal{O}(\alpha) \cup \mathcal{O}(\beta)$, $\mathcal{O}^{\ret}(\alpha \cup \beta) \colonequals \mathcal{O}^{\ret}(\alpha) \cup \mathcal{O}^{\ret}(\beta)$, $\mathcal{O}^{\del}(\alpha \cup \beta) \colonequals \mathcal{O}^{\del}(\alpha) \cup \mathcal{O}^{\del}(\beta)$, and $\mathcal{R}(\alpha \cup \beta) \colonequals \mathcal{R}(\alpha) \cup \mathcal{R}(\beta)$. We will also use \[\mathcal{O}^{\del,2}(\alpha \cup \beta) \colonequals \{e \in \mathcal{O}^{\del}(\alpha \cup \beta): \text{ both stubs of } e \text{ are incident to } \mathcal{R}(\alpha \cup \beta)\}\footnote{We note that $\mathcal{O}^{\del,2}(\alpha \cup \beta)$ is different from $\mathcal{O}^{\del,2}(\alpha) \cup \mathcal{O}^{\del,2}(\beta)$. A potential edge $\{x,y\} \in \mathcal{O}^{\del,2}(\alpha \cup \beta)$ can use one stub incident to $\mathcal{R}(\alpha) \setminus \mathcal{R}(\beta)$ and another stub incident to $\mathcal{R}(\beta) \setminus \mathcal{R}(\alpha)$, and thus $\{x,y\} \in \mathcal{O}^{\del}(\alpha) \setminus \mathcal{O}^{\del,2}(\alpha)$ and $\{x,y\} \in \mathcal{O}^{\del}(\beta) \setminus \mathcal{O}^{\del,2}(\beta)$.}\]  to denote the collection of edges in $\mathcal{O}^{\del}(\alpha \cup \beta) \colonequals \mathcal{O}^{\del}(\alpha) \cup \mathcal{O}^{\del}(\beta)$ with both stubs incident to $\mathcal{R}(\alpha \cup \beta) \colonequals \mathcal{R}(\alpha) \cup \mathcal{R}(\beta)$. Let $c(\alpha \cup \beta) \colonequals |\mathcal{O}(\alpha \cup \beta)|$, $c^{\ret}(\alpha \cup \beta) \colonequals |\mathcal{O}^{\ret}(\alpha \cup \beta)|$, $c^{\del}(\alpha \cup \beta) \colonequals |\mathcal{O}^{\del}(\alpha \cup \beta)|$, $c^{\del, 2}(\alpha \cup \beta) \colonequals |\mathcal{O}^{\del, 2}(\alpha \cup \beta)|$, and $\rho(\alpha \cup \beta) \colonequals |\mathcal{R}(\alpha \cup \beta)|$. For $e \in E(H)$, we denote $\mathcal{O}_{e}(\alpha \cup \beta) \colonequals \mathcal{O}_{e}(\alpha) \cup \mathcal{O}_{e}(\beta)$, and let $c_{e}(\alpha \cup \beta) \colonequals |\mathcal{O}_{e}(\alpha \cup \beta)|$.

    Now consider a pair of compatible local configurations $\alpha \in I_t$ and $\beta \in I_s$ such that $\beta \in N(\alpha)$. Then, the following relations hold:
    \begin{enumerate}
        \item $c(\alpha \cup \beta) = \sum_{e \in E(H)} c_{e}(\alpha \cup \beta)$.
        \item $c(\alpha \cup \beta) = c^{\ret}(\alpha \cup \beta) + c^{\del}(\alpha \cup \beta)$.
        \item $c^{\del,2}(\alpha \cup \beta) = 2\rho(\alpha \cup \beta) - c^{\del}(\alpha \cup \beta)$.
        \item $\rho(\alpha \cup \beta) \le c^{\del}(\alpha \cup \beta) \le 2\rho(\alpha \cup \beta)$, $0 \le c^{\del,2}(\alpha \cup \beta) \le c^{\del}(\alpha \cup \beta)$.
        \item Either $c^{\ret}(\alpha \cup \beta) + \rho(\alpha + \beta) < t + s$,\\
        \hspace*{0.6cm} or $c^{\ret}(\alpha \cup \beta) + \rho(\alpha + \beta) = t + s$ and $\rho(\alpha \cup \beta) \le c^{\del}(\alpha \cup \beta) \le 2\rho(\alpha \cup \beta)-1$.
    \end{enumerate}
    Relation $3$ and $4$ can be deduced similarly as before: the stubs of $\mathcal{R}(\alpha \cup \beta)$ are distributed between $\mathcal{O}^{\del,2}(\alpha \cup \beta)$ and $\mathcal{O}^{\del}(\alpha \cup \beta) \setminus \mathcal{O}^{\del,2}(\alpha \cup \beta)$, with every edge in $\mathcal{O}^{\del,2}(\alpha \cup \beta)$ incident to $2$ stubs of $\mathcal{R}(\alpha \cup \beta)$ and every edge in $\mathcal{O}^{\del}(\alpha \cup \beta) \setminus \mathcal{O}^{\del,2}(\alpha \cup \beta)$ incident to $1$ stub of $\mathcal{R}(\alpha \cup \beta)$. Thus, $2 |\mathcal{O}^{\del,2}(\alpha \cup \beta)| + |\mathcal{O}^{\del}(\alpha \cup \beta) \setminus \mathcal{O}^{\del,2}(\alpha \cup \beta)| = 2|\mathcal{R}(\alpha \cup \beta)|$, we have
    $2c^{\del,2}(\alpha \cup \beta) + (c^{\del}(\alpha \cup \beta) - c^{\del,2}(\alpha \cup \beta)) = 2\rho(\alpha \cup \beta)$, and thus $c^{\del,2}(\alpha \cup \beta) = 2\rho(\alpha \cup \beta) - c^{\del}(\alpha \cup \beta)$. Since $0 \le c^{\del,2}(\alpha \cup \beta) \le c^{\del}(\alpha \cup \beta)$, we have $\rho(\alpha \cup \beta) = \frac{c^{\del}(\alpha \cup \beta) + c^{\del,2}(\alpha \cup \beta)}{2} \le c^{\del}(\alpha \cup \beta) = 2\rho(\alpha \cup \beta) - c^{\del,2}(\alpha \cup \beta) \le 2\rho(\alpha \cup \beta)$.

    Next we explain why relation $5$ holds. Since $\beta \in N(\alpha)$, $\stub(\alpha) \cap \stub(\beta) \ne \emptyset$. We consider two cases below:
    \begin{itemize}
        \item Suppose the $2t$ cycle stubs of $\alpha$ and the $2s$ cycle stubs of $\beta$ share at least one stub $a$, then the stub $a$ is incident to a cycle edge $e \in (\mathcal{O}^{\ret}(\alpha) \sqcup \mathcal{R}(\alpha)) \cap (\mathcal{O}^{\ret}(\beta) \sqcup \mathcal{R}(\beta))$ of the same type $\in \{\originalretained, \rematched\}$ and the same color in $E(H)$. Since $\alpha$ and $\beta$ share a cycle edge, the total number of cycle edges in $\alpha \cup \beta$ is then $c^{\ret}(\alpha \cup \beta) + \rho(\alpha + \beta) < t + s$.
        \item  Now suppose $c^{\ret}(\alpha \cup \beta) + \rho(\alpha + \beta) = t + s$, and thus the $2t$ cycle stubs of $\alpha$ and the $2s$ cycle stubs of $\beta$ are disjoint. Then, at least one edge in $\mathcal{O}^{\del}(\alpha)$ is shared with $\mathcal{O}^{\del}(\beta)$, since only edges of type $\originaldeleted$ could be incident to stubs outside of the collection of cycle stubs. Since the stubs of $\mathcal{R}(\alpha)$ and the stubs of $\mathcal{R}(\beta)$ are disjoint, and every edge in $\mathcal{O}^{\del}(\alpha)$ ($\mathcal{O}^{\del}(\beta)$ resp.) is incident to the stubs of $\mathcal{R}(\alpha)$ ($\mathcal{R}(\beta)$ resp.), we know that the shared edge between $\mathcal{O}^{\del}(\alpha)$ and $\mathcal{O}^{\del}(\beta)$ must be incident to one stub in $\mathcal{R}(\alpha)$ and one stub in $\mathcal{R}(\beta)$. Hence, the shared edge belongs to $\mathcal{O}^{\del,2}(\alpha \cup \beta)$. As $c^{\del,2}(\alpha \cup \beta) \ge 1$, relation $4$ and $5$ imply
        \begin{align*}
            \rho(\alpha \cup \beta) \le c^{\del}(\alpha \cup \beta) = 2\rho(\alpha \cup \beta) - c^{\del,2}(\alpha \cup \beta) \le 2\rho(\alpha \cup \beta) - 1.
        \end{align*}
    \end{itemize}
    
    Analogous to Fact~\ref{fact:config-prob}, if $c^{\del}(\alpha \cup \beta) > r$, then $\EE[X_{\alpha} X_{\beta}] = 0$. If $c^{\del}(\alpha \cup \beta) \le r$, since $t,s \le D = o(n^{1/2})$ and $m = \frac{n}{k} = \Omega(n)$, for the same constant $K_1 = K_1(H) > 0$ as in Proposition~\ref{prop:p-alpha-bound}, the following holds for the probability that the pair of compatible local configurations $\alpha \in I_t$ and $\beta \in I_s$ is realized
    \begin{align*}
        &\EE[X_{\alpha} X_{\beta}]\\
        &= \left(\prod_{e \in E_1(H)}\frac{(m-2c_{e}(\alpha \cup \beta)-1)!!}{(m-1)!!}\right) \left(\prod_{e \in E_2(H)} \frac{(m - c_{e}(\alpha \cup \beta))!}{m!}\right)\left( \frac{\binom{\frac{dn}{2} - c(\alpha \cup \beta)}{r - c^{\del}(\alpha \cup \beta)}}{\binom{\frac{dn}{2}}{r}} \frac{(2r-2\rho(\alpha \cup \beta)-1)!!}{(2r-1)!!} \right)\\
        &\le \left(\frac{K_1}{n}\right)^{c(\alpha \cup \beta) + \rho(\alpha \cup \beta)}.
    \end{align*}
    If we denote $f = c^{\del}(\alpha \cup \beta)$, $h = \rho(\alpha \cup \beta)$, and $q = c^{\ret}(\alpha \cup \beta)$, then
    \begin{align*}
        c(\alpha \cup \beta) &= c^{\ret}(\alpha \cup \beta) + c^{\del}(\alpha \cup \beta) = f + q,
    \end{align*}
    and,
    \begin{align}
        \EE[X_{\alpha} X_{\beta}] \le \left(\frac{K_1}{n}\right)^{f+h+q}, \label{ineq:p-bound-b2}
    \end{align}
    regardless of whether $f = c^{\del}(\alpha \cup \beta) \le r$.

    Next, for $t,s \le D = o(n^{1/2})$ and a tuple $(f, h, q)$ such that $\max\{s,t\} \le h+q \le t+s$ and $h \le f \le 2h$ \footnote{Note that the restrictions $ \max\{s,t\}\le h+q\le t+s$ and $h \le f \le 2h$ are enforced because for any pair of compatible $\alpha \in I_t$ and $\beta \in I_s$, the total number of cycle edges $h+q = \rho(\alpha \cup \beta) + c^{\ret}(\alpha \cup \beta)$ is at least $\max\{s,t\}$ and at most $s+t$, and $\rho(\alpha \cup \beta) \le c^{\del}(\alpha \cup \beta) \le 2\rho(\alpha \cup \beta)$.}, the number of compatible pairs of $\alpha \in I_t$ and $\beta \in N(\alpha) \cap I_s$ such that $\alpha \ne \beta$ with $f = c^{\del}(\alpha \cup \beta)$, $h = \rho(\alpha \cup \beta)$, and $q = c^{\ret}(\alpha \cup \beta)$ can be bounded by
    \begin{align*}
        &\Bigg(\One\{h+q < t+s\}\sum_{c=1}^{s+t-(h+q)} (2t)^c \binom{s+2c-1}{2c-1} n^{h+q-c} d^{h+q}\\
        &\quad + \One\{h+q = t+s, f \le 2h - 1\} n^{h+q}d^{h+q} \Bigg) \cdot \binom{h+q}{h} \cdot \binom{2h}{2f-2h} \cdot (nd)^{2f-2h} \cdot (4h-2f-1)!!\\
        &\le \left(\One\{h+q < t+s\}\sum_{c=1}^{s+t} 2^{s+2c-1}\left(\frac{2t}{n}\right)^c + \One\left\{
        \begin{array}{c}
               h+q = t+s,\\
               f \le 2h-1
        \end{array} \right\}\right) \cdot (2d n)^{q + 2f - h} \frac{(2h)!(4h-2f-1)!!}{(2f-2h)!(4h-2f)!}\\
        &\le \left(\One\{h+q < t+s\}\frac{2^s(2t)}{n} + \One\left\{
        \begin{array}{c}
               h+q = t+s,\\
               f \le 2h-1
        \end{array} \right\}\right) \cdot (2d n)^{q + 2f - h} (2h)^{4h-2f}, \numberthis \label{ineq:count-bound-b2}
    \end{align*}
    since the first term counts the situation when the $t$ cycle edges in $\mathcal{O}^{\ret}(\alpha) \sqcup \mathcal{R}(\alpha)$ and the $s$ cycle edges in $\mathcal{O}^{\ret}(\beta) \sqcup \mathcal{R}(\beta)$ share at least one edge, in which case
    \begin{itemize}
        \item $\sum_{c=1}^{s+t-(h+q)} (2t)^c \binom{s+2c-1}{2c-1} n^{h+q-c} d^{h+q}$ bounds the number of choices of the $h+q = \rho(\alpha \cup \beta) + c^{\ret}(\alpha \cup \beta)$ cycle vertices of $\alpha$ and $\beta$ and the color choices among $E(H)$ for the cycle edges of type $\originalretained$. 
        
        To see this, note that $s+t-(h+q)$ is the total number of shared edges between the cycle edges $\mathcal{O}^{\ret}(\alpha) \sqcup \mathcal{R}(\alpha)$ of $\alpha$ and the cycle edges $\mathcal{O}^{\ret}(\beta) \sqcup \mathcal{R}(\beta)$ of $\beta$. Since $\alpha \ne \beta$ are compatible, the collection of edges $\mathcal{O}^{\ret}(\alpha) \sqcup \mathcal{R}(\alpha)$ is different from $\mathcal{O}^{\ret}(\beta) \sqcup \mathcal{R}(\beta)$, as otherwise compatibility would also force $\mathcal{O}^{\del}(\alpha) = \mathcal{O}^{\del}(\beta)$, contradicting $\alpha \ne \beta$. Thus, the shared cycle edges $(\mathcal{O}^{\ret}(\alpha) \sqcup \mathcal{R}(\alpha)) \cap (\mathcal{O}^{\ret}(\beta) \sqcup \mathcal{R}(\beta))$ between $\alpha$ and $\beta$ form a common subgraph of a length-$t$ cycle and a different length-$s$ cycle, which is a union of disjoint paths. Let $c$ be the number of connected components of the disjoint union of paths of $(\mathcal{O}^{\ret}(\alpha) \sqcup \mathcal{R}(\alpha)) \cap (\mathcal{O}^{\ret}(\beta) \sqcup \mathcal{R}(\beta))$, which satisfies $1 \le c \le s+t-(h+q)$. Then, the $t$ cycle edges of $\beta$ are partitioned into $2c$ consecutive segments, where $c$ segments consisting of non-shared edges of $\beta$ interleave with $c$ segments consisting of shared edges between $\alpha$ and $\beta$. We first bound the number of choices for this decomposition of the cycle edges of $\beta$ into $2c$ segments, which is at most $\binom{s+2c-1}{2c-1}$ by the standard stars-and-bars argument.

        Next, we enumerate how the $c$ segments of shared edges of $\beta$, with lengths specified by the decomposition of edges of $\beta$ in the previous paragraph, are positioned in $\alpha$. To do so, we enumerate the starting vertices for each of $c$ segments and one of the two directions that each segment follow on $\alpha$, giving at most $(2t)^c$ choices.

        Finally, after the intersection pattern is enumerated, the number of vertices that the cycle edges of $\alpha$ and $\beta$ are attached to is exactly \[t + (s - \#\{\text{shared cycle edges of } \alpha \text{ and }\beta\} - c) = t + s - (s+t-(h+q)) - c = h+q-c.\]
        The number of choices for these vertices and their colors among $E(H)$ is then bounded by $n^{h+q-c}d^{h+q}$.

        Combining the choices for $1\le c \le s+t-(h+q)$ and the factors $\binom{s+2c-1}{2c-1}$, $(2t)^c$, and $n^{h+q-c}d^{2(h+q)}$, we see that $\sum_{c=1}^{s+t-(h+q)} (2t)^c \binom{s+2c-1}{2c-1} n^{h+q-c} d^{h+q}$ bounds the number of choices of the cycle vertices and the color choices among $E(H)$ for the cycle edges in $\alpha$ and $\beta$ of type $\originalretained$;
    \end{itemize}
    and the second term counts the situation when the $t$ cycle edges in $\mathcal{O}^{\ret}(\alpha) \sqcup \mathcal{R}(\alpha)$ and the $s$ cycle edges in $\mathcal{O}^{\ret}(\beta) \sqcup \mathcal{R}(\beta)$ are disjoint, in which case
    \begin{itemize}
        \item $h+q = t+s$ and $h \le f\le 2h - 1$ according to relation $5$ that if $c^{\ret}(\alpha \cup \beta) + \rho(\alpha + \beta) = t + s$, then $\rho(\alpha \cup \beta) \le c^{\del}(\alpha \cup \beta) \le 2\rho(\alpha \cup \beta)-1$. Thus, \[n^{t+s}d^{t+s} = \One\left\{
        \begin{array}{c}
               h+q = t+s,\\
               f \le 2h-1
        \end{array} \right\} n^{h+q} d^{h+q}\] bounds the number of choices for the cycle vertices choices and the color choices among $E(H)$ for the cycle edges in $\alpha$ and $\beta$ of type $\originalretained$;
    \end{itemize}
    and the rest of the common terms arise because
    \begin{itemize}
        \item $\binom{h+q}{h} = \binom{\rho(\alpha \cup \beta) + \mathcal{O}^{\ret}(\alpha \cup \beta)}{\rho(\alpha \cup \beta)}$ bounds the number of choices of the cycle edges of $\alpha$ and $\beta$ of the type $\rematched$,
        \item $\binom{2h}{2f-2h} = \binom{2\rho(\alpha \cup \beta)}{2(c^{\del}(\alpha \cup \beta) - \rho(\alpha \cup \beta))} = \binom{2\rho(\alpha \cup \beta)}{c^{\del}(\alpha \cup \beta) - c^{\del,2}(\alpha \cup \beta)}$ bounds the number of choices of the stubs of $\mathcal{R}(\alpha \cup \beta)$ that are incident to $\mathcal{O}^{\del}(\alpha \cup \beta) \setminus \mathcal{O}^{\del,2}(\alpha \cup \beta)$,
        \item $(nd)^{2f-2h} = (nd)^{c^{\del}(\alpha \cup \beta) - c^{\del,2}(\alpha \cup \beta)}$ bounds the number of choices of stubs that are paired to the $2f-2h$ stubs of $\mathcal{R}(\alpha \cup \beta)$ by $\mathcal{O}^{\del}(\alpha \cup \beta) \setminus \mathcal{O}^{\del,2}(\alpha \cup \beta)$,
        \item and finally, $(4h-2f-1)!! = (2\rho(\alpha \cup \beta) - (c^{\del}(\alpha \cup \beta) - c^{\del,2}(\alpha \cup \beta))-1)!!$ bounds the number of choices of $\mathcal{O}^{\del,2}(\alpha \cup \beta)$.
    \end{itemize}

    Combining \eqref{ineq:p-bound-b2} and \eqref{ineq:count-bound-b2}, we may bound
    \begin{align*}
        b_2 &= \sum_{\alpha \in I} \sum_{\substack{\beta \in N(\alpha): \beta \ne \alpha}} \EE[X_{\alpha}X_{\beta}]\\
        &= \sum_{t=1}^D \sum_{s=1}^D \sum_{\substack{f,h,q \ge 0:\\ \max\{s,t\} \le h+q \le t+s,\\ h \le f \le 2h}} \sum_{\substack{\alpha \in I_t,\\ \beta \in N(\alpha) \cap I_s:\\ \alpha \ne \beta,\\ f = c^{\del}(\alpha \cup \beta),\\ h = \rho(\alpha \cup \beta),\\ q = c^{\ret}(\alpha \cup \beta)} } \EE[X_{\alpha} X_{\beta}]\\
        &\le \sum_{t=1}^D \sum_{s=1}^D \sum_{\substack{f,h,q \ge 0:\\ \max\{s,t\} \le h+q \le t+s,\\ h \le f \le 2h}}  \left(\One\{h+q < t+s\}\frac{2^s(2t)}{n} + \One\left\{
        \begin{array}{c}
               h+q = t+s,\\
               f \le 2h-1
        \end{array} \right\}\right) \\
        &\quad\cdot (2d n)^{q + 2f - h} (2h)^{4h-2f} \left(\frac{K_1}{n}\right)^{f+h+q}\\
        &\le \sum_{t=1}^D \sum_{s=1}^D \sum_{\substack{f,h,q \ge 0:\\ \max\{s,t\} \le h+q < t+s,\\ h \le f \le 2h}}  \frac{1}{n} \cdot (2K_2)^{t+s}\left(\frac{h^2}{n}\right)^{2h-f} \\
        &\quad+ \sum_{t=1}^D \sum_{s=1}^D \sum_{\substack{f,h,q \ge 0:\\  h+q = t+s,\\ h \le f \le 2h-1}} (2K_2)^{t+s} \left(\frac{h^2}{n}\right)^{2h-f}
        \intertext{for the same constant $K_2 = K_2(H) > 0$ as before, since $q+2f-h, 4h-2f$, and $f+h+q$ are all at most $3(t+s)$, and $t \le 2^t$,}
        &\le \frac{1}{n} \sum_{j=2}^{2D} j (2K_2)^j \sum_{\ell = 0}^\infty j^2\left(\frac{j^2}{n}\right)^{\ell} + \sum_{j=2}^{2D} j(2K_2)^j \sum_{\ell = 1}^\infty j^2 \left(\frac{j^2}{n}\right)^{\ell}\\
        &\le \frac{2(2D)^4 (2K_2)^{2D}}{n}  + \frac{2(2D)^4 (2K_2)^{2D}}{n},
    \end{align*}
    which is $o(1)$ provided that $D \le c_2\log(n)$ for a constant $c_2 = c_2(H) > 0$.

    \subsection{Bound for $b_3$} \label{sec:b3-bound}

    Recall that
    \begin{align*}
        b_3 &= \sum_{\alpha \in I} \EE\Big|\EE[X_{\alpha} - p_{\alpha} \vert \sigma(X_{\beta}: \beta \not\in N(\alpha))]\Big|.
    \end{align*}
    We may rewrite it as
    \begin{align*}
        &\sum_{\alpha \in I} \EE\Big|\EE[X_{\alpha} - p_{\alpha} \vert \sigma(X_{\beta}: \beta \not\in N(\alpha))]\Big|\\
        &= \sum_{\alpha \in I} \sum_{z_{\beta}: \beta \not\in N(\alpha)}\bigg|\Pr(X_{\alpha} = 1, (X_{\beta}: \beta \not\in N(\alpha)) = (z_{\beta}: \beta \not\in N(\alpha)))\\
        &\quad - p_{\alpha} \Pr((X_{\beta}: \beta \not\in N(\alpha)) = (z_{\beta}: \beta \not\in N(\alpha)))\bigg|\\
        &= \sum_{\alpha \in I} p_{\alpha} \sum_{z_{\beta}: \beta \not\in N(\alpha)}\bigg|\frac{\Pr(X_{\alpha} = 1, (X_{\beta}: \beta \not\in N(\alpha)) = (z_{\beta}: \beta \not\in N(\alpha)))}{\Pr(X_{\alpha} = 1)}\\
        &\quad - \Pr((X_{\beta}: \beta \not\in N(\alpha)) = (z_{\beta}: \beta \not\in N(\alpha)))\bigg|\\
        &= \sum_{\alpha \in I} p_{\alpha} \left\|\Pr((X_{\beta}: \beta \not\in N(\alpha)) = \bullet \vert X_{\alpha} = 1 ) - \Pr((X_{\beta}: \beta \not\in N(\alpha)) = \bullet )\right\|_1\\
        &= 2\sum_{\alpha \in I} p_{\alpha} \cdot d_{\TV}\left(\sL(X_{\beta}: \beta \not\in N(\alpha) \vert X_{\alpha} = 1), \sL(X_{\beta}: \beta \not\in N(\alpha))\right). \numberthis \label{eq:b3-TV}
    \end{align*}
    Since $Z = (X_{\beta}: \beta \not\in N(\alpha))$ is a random vector that depends on the full information of the noiseless random lift before the random deletion, the deletion process, and the rematching process, we work with the probability space on $\{\text{ feasible }(\mathcal{O}^{\ret}, \mathcal{O}^{\del}, \mathcal{R})\}$, where $\mathcal{O}^{\ret}$ is the collection of edges from the original noiseless random lift that are not deleted by the random deletion process, $\mathcal{O}^{\del}$ is the collection of $r$ edges from the original noiseless random lift that are deleted by the random deletion process, and $\mathcal{R}$ is the collection of $r$ edges that are formed in the rematching process after the random deletion process. We say $(\mathcal{O}^{\ret}, \mathcal{O}^{\del}, \mathcal{R})$ is feasible if the edge configurations in $(\mathcal{O}^{\ret}, \mathcal{O}^{\del}, \mathcal{R})$ can be realized by the noisy random lift distribution $\Tilde{\sP}$. Note that the original noiseless random lift is then the graph formed by the edges of $\mathcal{O}^{\ret} \sqcup \mathcal{O}^{\del}$, and the final observed noisy random lift is the graph formed by the edges of $\mathcal{O}^{\ret} \sqcup \mathcal{R}$.
    
    For a specific $W = (\mathcal{O}^{\ret}, \mathcal{O}^{\del}, \mathcal{R})$, we use $X_{\beta}(W)$ to denote the indicator variable of the realization of the local configuration $\beta$ in $W$. Using of the coupling characterization of the total variation distance, we get
    \begin{align*}
        &d_{\TV}\left(\sL(X_{\beta}: \beta \not\in N(\alpha) \vert X_{\alpha} = 1), \sL(X_{\beta}: \beta \not\in N(\alpha))\right)\\
        &= \inf_{\Gamma \in \mathfrak{C}\left(\sL(X_{\beta}: \beta \not\in N(\alpha) \vert X_{\alpha} = 1), \sL(X_{\beta}: \beta \not\in N(\alpha))\right) } \Pr_{(\Tilde{Z}, Z) \sim \Gamma}\left(\Tilde{Z} \ne Z\right)\\
        &\le \inf_{\Pi \in \mathfrak{C}(\sL_{\alpha}, \sL) } \Pr_{(W_{\alpha}, W) \sim \Pi}\left((X_{\beta}(W_{\alpha}): \beta \not\in N(\alpha)) \ne (X_{\beta}(W): \beta \not\in N(\alpha))\right), \numberthis \label{ineq:TV-coupling}
    \end{align*}
    where $\mathfrak{C}(\sL_{\alpha}, \sL)$ is the collection of couplings between $\sL_{\alpha} \colonequals \sL(W = (\mathcal{O}^{\ret}, \mathcal{O}^{\del}, \mathcal{R}) \vert X_{\alpha}(W) = 1)$ and $\sL \colonequals \sL(W = (\mathcal{O}^{\ret}, \mathcal{O}^{\del}, \mathcal{R}))$.

    To upper bound $b_3$, we will 
    \begin{itemize}
        \item for every $\alpha \in I$ construct a specific coupling $\Pi$ between $W_{\alpha} \sim \sL_{\alpha}$ and $W \sim \sL$, and
        \item for $(W_{\alpha}, W) \sim \Pi$ drawn from this coupling, control the probability that $X_{\beta}(W_{\alpha}) \ne X_{\beta}(W)$ for every $\beta \not\in N(\alpha)$. 
    \end{itemize} 

    To do so, we will actually define a sequence of distributions $\mathcal{L} \equalscolon \mathcal{L}_0, \mathcal{L}_1, \mathcal{L}_2, \dots, \mathcal{L}_{\ell} \colonequals \mathcal{L}_{\alpha}$ for some number of steps $\ell \in \mathbb{N}$, and construct a joint coupling $\Pi \in \mathfrak{C}(\mathcal{L}_0, \mathcal{L}_1, \dots, \mathcal{L}_{\ell})$. Before we describe this joint coupling, we first define a few primitive operations to help describe the coupling construction. We call these operations informally as switching operations, which have been extensively used in the analysis of random regular graphs (for example, see \cite[Section 3]{mckay2004short} and \cite[Section 4]{cook2018size}).

    \begin{definition}[Primitive Operations]
        We will need four types of primitive operations. We will always assume below that $(\mathcal{O}^{\ret}, \mathcal{O}^{\del})$ is a pair of disjoint edge collections, such that no stub is incident to at least two edges in $\mathcal{O}^{\ret} \sqcup \mathcal{O}^{\del}$, and similarly for edge collection $\mathcal{R}$ considered below, no stub is incident to at least two edges in $\mathcal{R}$.
        \begin{enumerate}
            \item \textbf{($\mathcal{O}$ switching):} For an edge $\{(x,\vec{e}),(y,\iota(\vec{e}))\}$ of color $e \in E(H)$, the $\{(x,\vec{e}),(y,\iota(\vec{e}))\}{-}\mathcal{O}$ switching does the following to any pair $(\mathcal{O}^{\ret}, \mathcal{O}^{\del})$ of disjoint edge collections such that $\mathcal{O}^{\ret} \sqcup \mathcal{O}^{\del}$ forms the graph of a noiseless random lift.
            
            If $\{(x,\vec{e}),(y,\iota(\vec{e}))\} \in \mathcal{O}^{\ret} \sqcup \mathcal{O}^{\del}$, output $(\mathcal{O}^{\ret}, \mathcal{O}^{\del})$. Otherwise, let $\{(x,\vec{e}),(a,\iota(\vec{e}))\}$, \newline$\{(b,\vec{e}),(y,\iota(\vec{e}))\} \in \mathcal{O}^{\ret} \sqcup \mathcal{O}^{\del}$ be the two edges incident to stubs $(x,\vec{e})$ and $(y,\iota(\vec{e}))$.
                \begin{itemize}
                    \item If both $\{(x,\vec{e}),(a,\iota(\vec{e}))\}$ and $\{(b,\vec{e}),(y,\iota(\vec{e}))\}$ belong to $\mathcal{O}^{\ret}$, set \[\Tilde{\mathcal{O}}^{\ret} \colonequals \left(\mathcal{O}^{\ret} \cup \{\{(x,\vec{e}),(y,\iota(\vec{e}))\}, \{(b,\vec{e}),(a,\iota(\vec{e}))\}\}\right) \setminus \{\{(x,\vec{e}),(a,\iota(\vec{e}))\}, \{(b,\vec{e}),(y,\iota(\vec{e}))\}\}\]
                    and output $(\Tilde{\mathcal{O}}^{\ret}, \mathcal{O}^{\del})$.
                    \item If both $\{(x,\vec{e}),(a,\iota(\vec{e}))\}$ and $\{(b,\vec{e}),(y,\iota(\vec{e}))\}$ belong to $\mathcal{O}^{\del}$, set \[\Tilde{\mathcal{O}}^{\del} \colonequals \left(\mathcal{O}^{\del} \cup \{\{(x,\vec{e}),(y,\iota(\vec{e}))\}, \{(b,\vec{e}),(a,\iota(\vec{e}))\}\}\right) \setminus \{\{(x,\vec{e}),(a,\iota(\vec{e}))\}, \{(b,\vec{e}),(y,\iota(\vec{e}))\}\}\]
                    and output $(\mathcal{O}^{\ret}, \Tilde{\mathcal{O}}^{\del})$.
                    \item If $\{(x,\vec{e}),(a,\iota(\vec{e}))\} \in \mathcal{O}^{\ret}$ and $\{(b,\vec{e}),(y,\iota(\vec{e}))\} \in \mathcal{O}^{\del}$, set
                    \begin{align*}
                        \hat{\mathcal{O}}^{\ret} &\colonequals \mathcal{O}^{\ret} \setminus \{\{(x,\vec{e}),(a,\iota(\vec{e}))\}\},\\
                        \hat{\mathcal{O}}^{\del} &\colonequals \mathcal{O}^{\del} \setminus \{\{(b,\vec{e}),(y,\iota(\vec{e}))\}\},\\
                        (\Tilde{\mathcal{O}}^{\ret}, \Tilde{\mathcal{O}}^{\del}) &\colonequals \begin{cases}
                            (\hat{\mathcal{O}}^{\ret} \cup \{\{(x,\vec{e}),(y,\iota(\vec{e}))\}\}, \hat{\mathcal{O}}^{\del} \cup \{\{(b,\vec{e}),(a,\iota(\vec{e}))\}\}) & \quad \text{w.p.~} 1/2,\\
                            (\hat{\mathcal{O}}^{\ret} \cup \{\{(b,\vec{e}),(a,\iota(\vec{e}))\}\}, \hat{\mathcal{O}}^{\del} \cup \{\{(x,\vec{e}),(y,\iota(\vec{e}))\}\}) & \quad \text{w.p.~} 1/2,
                        \end{cases}
                    \end{align*}
                    and output $(\Tilde{\mathcal{O}}^{\ret}, \Tilde{\mathcal{O}}^{\del})$.
                \end{itemize}
            \item \textbf{($\ret/\del$ switching):} For an edge $\{(x,\vec{e}),(y,\iota(\vec{e}))\}$ of color $e \in E(H)$, a type $\tau \in \{\ret, \del\}$, and a collection $\sA$ of edges, the $(\{(x,\vec{e}),(y,\iota(\vec{e}))\}, \sA){-}\tau$ switching does the following to any pair $(\mathcal{O}^{\ret}, \mathcal{O}^{\del})$ of disjoint edge collections such that $\mathcal{O}^{\ret} \sqcup \mathcal{O}^{\del}$ forms the graph of a noiseless random lift, $\{(x,\vec{e}),(y,\iota(\vec{e}))\} \in (\mathcal{O}^{\ret} \sqcup \mathcal{O}^{\del}) \setminus \sA$, and $\sA \subseteq \mathcal{O}^{\ret} \sqcup \mathcal{O}^{\del}$.

            \begin{itemize}
                \item If $\{(x,\vec{e}),(y,\iota(\vec{e}))\} \in \mathcal{O}^{\ret}$ and $\tau = \ret$, or $\{(x,\vec{e}),(y,\iota(\vec{e}))\} \in \mathcal{O}^{\del}$ and $\tau = \del$, output $(\mathcal{O}^{\ret}, \mathcal{O}^{\del})$.
                \item If $\{(x,\vec{e}),(y,\iota(\vec{e}))\} \in \mathcal{O}^{\ret}$ and $\tau = \del$, 
                \begin{itemize}
                    \item if $\mathcal{O}^{\del} \setminus \sA = \emptyset$, do nothing and declare ``failure'';
                    \item otherwise, let $\{(a,\vec{e}),(b,\iota(\vec{e}))\} \sim \Unif(\mathcal{O}^{\del} \setminus \sA)$ be a uniformly random edge of color $e$ not belonging to $\sA$, set
                    \begin{align*}
                        \Tilde{\mathcal{O}}^{\ret} &\colonequals (\mathcal{O}^{\ret} \cup \{\{(a,\vec{e}),(b,\iota(\vec{e}))\}\}) \setminus \{\{(x,\vec{e}),(y,\iota(\vec{e}))\}\},\\
                        \Tilde{\mathcal{O}}^{\del} &\colonequals (\mathcal{O}^{\del} \cup \{\{(x,\vec{e}),(y,\iota(\vec{e}))\}\}) \setminus \{\{(a,\vec{e}),(b,\iota(\vec{e}))\}\},
                    \end{align*}
                    and output $(\Tilde{\mathcal{O}}^{\ret}, \Tilde{\mathcal{O}}^{\del})$.
                \end{itemize}
                \item If $\{(x,\vec{e}),(y,\iota(\vec{e}))\} \in \mathcal{O}^{\del}$ and $\tau = \ret$, 
                \begin{itemize}
                    \item if $\mathcal{O}^{\ret} \setminus \sA = \emptyset$, do nothing and declare ``failure'';
                    \item otherwise, let $\{(a,\vec{e}),(b,\iota(\vec{e}))\} \sim \Unif(\mathcal{O}^{\ret} \setminus \sA)$ be a uniformly random edge of color $e$ not belonging to $\sA$, set
                    \begin{align*}
                        \Tilde{\mathcal{O}}^{\ret} &\colonequals (\mathcal{O}^{\ret} \cup \{\{(x,\vec{e}),(y,\iota(\vec{e}))\}\}) \setminus \{\{(a,\vec{e}),(b,\iota(\vec{e}))\}\},\\
                        \Tilde{\mathcal{O}}^{\del} &\colonequals (\mathcal{O}^{\del} \cup \{\{(a,\vec{e}),(b,\iota(\vec{e}))\}\}) \setminus \{\{(x,\vec{e}),(y,\iota(\vec{e}))\}\},
                    \end{align*}
                    and output $(\Tilde{\mathcal{O}}^{\ret}, \Tilde{\mathcal{O}}^{\del})$.
                \end{itemize}
            \end{itemize}
            \item \textbf{(Support switching):} For a pair $(S, T)$ of disjoint stub collections of equal size $|S| = |T|$, the $(S,T){-}\support$ switching does the following to a collection $\mathcal{R}$ of edges such that every stub of $S$ is incident to $\mathcal{R}$, and no stub of $T$ is incident to $\mathcal{R}$. 
            
            Draw a uniformly random bijection $f: S \to T$, and extend $f$ to outside of $S$ with the identity map. Let
            \begin{align*}
                \mathcal{D} \colonequals \{\{(x,\vec{e}_1),(y,\vec{e}_2)\} \in \mathcal{R}: (x,\vec{e}_1) \in S \text{ or } (y,\vec{e}_2) \in S\}
            \end{align*}
            be the collection of edges in $\mathcal{R}$ incident to $S$. We denote
            \begin{align*}
                f(\mathcal{D}) \colonequals \{\{f(x,\vec{e}_1),f(y,\vec{e}_2)\}: \{(x,\vec{e}_1),(y,\vec{e}_2)\} \in \mathcal{D}\}
            \end{align*}
            to be edge collection transformed by $f$. Then, set
            \begin{align*}
                \Tilde{\mathcal{R}} \colonequals \left(\mathcal{R} \cup f(\mathcal{D})\right) \setminus \mathcal{D}
            \end{align*}
            and output $\Tilde{\mathcal{R}}$.
            
            \item \textbf{($\mathcal{R}$ switching):} For an edge $\{(x,\vec{e}_1),(y,\vec{e}_2)\}$, the $\{(x,\vec{e}_1),(y,\vec{e}_2)\}{-}\mathcal{R}$ switching does the following to any edge collection $\mathcal{R}$ such that both stubs $(x,\vec{e}_1)$ and $(y,\vec{e}_2)$ are incident to $\mathcal{R}$.

            If $\{(x,\vec{e}_1),(y,\vec{e}_2)\} \in \mathcal{R}$, output $\mathcal{R}$. Otherwise, let $\{(x,\vec{e}_1),(a,\vec{e}_a)\}, \{(b,\vec{e}_b),(y,\vec{e}_2)\} \in \mathcal{R}$ be the two edges incident to stubs $(x,\vec{e}_1)$ and $(y,\vec{e}_2)$. Set \[\Tilde{\mathcal{R}} \colonequals \left(\mathcal{R} \cup \{\{(x,\vec{e}_1),(y,\vec{e}_2)\}, \{(b,\vec{e}_b),(a,\vec{e}_a)\}\}\right) \setminus \{\{(x,\vec{e}_1),(a,\vec{e}_a)\}, \{(b,\vec{e}_b),(y,\vec{e}_2)\}\}\]
            and output $\Tilde{\mathcal{R}}$.
        \end{enumerate}
    \end{definition}

    Now we are ready to describe the coupling construction. 

    \paragraph{Coupling between $\sL$ and $\sL_{\alpha}$:}

    Recall that a cycle local configuration $\alpha \in I$ consists of three collections of edges $\mathcal{O}^{\ret}(\alpha), \mathcal{O}^{\del}(\alpha)$, and $\mathcal{R}(\alpha)$ such that
    \begin{itemize}
        \item the edges in $\mathcal{O}^{\ret}(\alpha) \sqcup \mathcal{R}(\alpha)$ form a cycle,
        \item and every stub incident to $\mathcal{O}^{\del}(\alpha)$ is also incident to $\mathcal{R}$.
    \end{itemize}

    First, we list the edges 
    \begin{align*}
        \mathcal{O}^{\ret}(\alpha)\sqcup \mathcal{O}^{\del}(\alpha) &\equalscolon \{\{(x_1, \vec{e}_1),(y_1, \iota(\vec{e}_1))\}, \dots, \{(x_{\ell_1}, \vec{e}_{\ell_1}),(y_{\ell_1}, \iota(\vec{e}_{\ell_1}))\}\},\\
        \mathcal{R} &\equalscolon \{\{(p_1,\vec{f}_1),(q_1,\vec{g}_1)\}, \dots, \{(p_{\ell_2}, \vec{f}_{\ell_2}), (q_{\ell_2}, \vec{g}_{\ell_2})\}\}.
    \end{align*}
    
    We construct the coupling in stages. We start with $W_0 = (\mathcal{O}^{\ret}_0, \mathcal{O}^{\del}_0, \mathcal{R}_0) \sim \sL$.
    
    \begin{enumerate}
        \item \textbf{($\mathcal{O}$ stage):} 
        For $i = 1, 2, \dots, \ell_1$, do the following:
        \begin{itemize}
            \item Let $W_{i-1} = (\mathcal{O}^{\ret}_{i-1}, \mathcal{O}^{\del}_{i-1}, \mathcal{R}_{i-1})$ be the current configuration.
            \item Apply $\{(x_i,\vec{e}_i),(y_i, \iota(\vec{e}_i))\}{-}\mathcal{O}$ switching to $(\mathcal{O}^{\ret}_{i-1}, \mathcal{O}^{\del}_{i-1})$ to obtain $(\mathcal{O}^{\ret}_i, \mathcal{O}^{\del}_i)$.
            \item Let $S \colonequals \stub(\mathcal{O}^{\del}_{i-1}) \setminus \stub(\mathcal{O}^{\del}_i))$, and $T \colonequals \stub(\mathcal{O}^{\del}_i) \setminus \stub(\mathcal{O}^{\del}_{i-1}) $. Apply $(S,T){-}\support$ switching to $\mathcal{R}_{i-1}$ to obtain $\mathcal{R}_i$.
            \item Set the new configuration as $W_i \colonequals (\mathcal{O}^{\ret}_{i}, \mathcal{O}^{\del}_{i}, \mathcal{R}_{i})$.
        \end{itemize}
            
        \item \textbf{($\ret/\del$ stage):}
        Initialize $\mathcal{A}_0 \colonequals \emptyset$. For $i = 1, 2, \dots, \ell_1$, do the following:
        \begin{itemize}
            \item Let $W_{\ell_1 + i - 1} = (\mathcal{O}^{\ret}_{\ell_1 + i - 1}, \mathcal{O}^{\del}_{\ell_1 + i - 1}, \mathcal{R}_{\ell_1 + i - 1})$ be the current configuration.
            \item If $\{(x_i,\vec{e}_i),(y_i, \iota(\vec{e}_i))\} \in \mathcal{O}^{\ret}(\alpha)$, apply $(\{(x_i,\vec{e}_i),(y_i, \iota(\vec{e}_i))\}, \mathcal{A}_{i-1}){-}\ret$ switching to \newline $(\mathcal{O}^{\ret}_{\ell_1 + i - 1}, \mathcal{O}^{\del}_{\ell_1 + i - 1})$ to obtain $(\mathcal{O}^{\ret}_{\ell_1 + i}, \mathcal{O}^{\del}_{\ell_1 + i})$. Similarly, if $\{(x_i,\vec{e}_i),(y_i, \iota(\vec{e}_i))\} \in \mathcal{O}^{\del}(\alpha)$, apply $(\{(x_i,\vec{e}_i),(y_i, \iota(\vec{e}_i))\}, \mathcal{A}_{i-1}){-}\del$ switching to $(\mathcal{O}^{\ret}_{\ell_1 + i - 1}, \mathcal{O}^{\del}_{\ell_1 + i - 1})$ to obtain $(\mathcal{O}^{\ret}_{\ell_1 + i}, \mathcal{O}^{\del}_{\ell_1 + i})$. 
            \item Let $S \colonequals \stub(\mathcal{O}^{\del}_{\ell_1 + i-1}) \setminus \stub(\mathcal{O}^{\del}_{\ell_1 + i}))$, and $T \colonequals \stub(\mathcal{O}^{\del}_{\ell_1 + i}) \setminus \stub(\mathcal{O}^{\del}_{\ell_1 + i-1}) $. Apply $(S,T){-}\support$ switching to $\mathcal{R}_{\ell_1 + i-1}$ to obtain $\mathcal{R}_{\ell_1 + i}$.
            \item Set the new configuration as $W_{\ell_1 + i} \colonequals (\mathcal{O}^{\ret}_{\ell_1 + i}, \mathcal{O}^{\del}_{\ell_1 + i}, \mathcal{R}_{\ell_1 + i})$, and \newline $\mathcal{A}_{i} \colonequals \mathcal{A}_{i-1} \cup \{\{(x_i,\vec{e}_i),(y_i, \iota(\vec{e}_i))\}\}$.
        \end{itemize}
        \item \textbf{($\mathcal{R}$ stage):} For $i = 1, 2, \dots, \ell_2$, do the following:
        \begin{itemize}
            \item Let $W_{2\ell_1 + i - 1} = (\mathcal{O}^{\ret}_{2\ell_1 + i - 1}, \mathcal{O}^{\del}_{2\ell_1 + i - 1}, \mathcal{R}_{2\ell_1 + i - 1})$ be the current configuration.
            \item Apply $\{(p_i,\vec{f}_i),(q_i,\vec{g}_i)\}{-}\mathcal{R}$ switching to $\mathcal{R}_{2\ell_1 + i - 1}$ to obtain $\mathcal{R}_{2\ell_1 + i}$. 
            \item Set $(\mathcal{O}^{\ret}_{2\ell_1 + i}, \mathcal{O}^{\del}_{2\ell_1 + i}) \colonequals (\mathcal{O}^{\ret}_{2\ell_1 + i - 1}, \mathcal{O}^{\del}_{2\ell_1 + i - 1})$.
            \item Set the new configuration as $W_{2\ell_1 + i} \colonequals (\mathcal{O}^{\ret}_{2\ell_1 + i}, \mathcal{O}^{\del}_{2\ell_1 + i}, \mathcal{R}_{2\ell_1 + i})$.
        \end{itemize}
    \end{enumerate}
    We set the final configuration as $W_{\alpha} \colonequals W_{2\ell_1 + \ell_2}$.

    First, we argue that the joint distribution of $(W_0, W_1, \dots, W_{2\ell_1 + \ell_2})$ is a joint coupling between $\mathcal{L} \equalscolon \mathcal{L}_0, \mathcal{L}_1, \mathcal{L}_2, \dots, \mathcal{L}_{2\ell_1 + \ell_2} \colonequals \mathcal{L}_{\alpha}$, where
    \begin{itemize}
        \item for $1 \le i \le \ell_1$, $\mathcal{L}_i$ is the distribution of $(\mathcal{O}^{\ret}, \mathcal{O}^{\del}, \mathcal{R}) \sim \mathcal{L}$ conditioned on \[\{\{(x_1, \vec{e}_1),(y_1, \iota(\vec{e}_1))\}, \dots, \{(x_{i}, \vec{e}_{i}),(y_{i}, \iota(\vec{e}_{i}))\}\} \subseteq \mathcal{O}^{\ret} \sqcup \mathcal{O}^{\del};\]
        \item for $1\le i \le \ell_1$, $\mathcal{L}_{\ell_1 + i}$ is the distribution of $(\mathcal{O}^{\ret}, \mathcal{O}^{\del}, \mathcal{R}) \sim \mathcal{L}$ conditioned on \[\{\{(x_1, \vec{e}_1),(y_1, \iota(\vec{e}_1))\}, \dots, \{(x_{\ell_1}, \vec{e}_{\ell_1}),(y_{\ell_1}, \iota(\vec{e}_{\ell_1}))\}\} \subseteq \mathcal{O}^{\ret} \sqcup \mathcal{O}^{\del}\] and that for every $1 \le j \le i$, $\{(x_j, \vec{e}_j),(y_j, \iota(\vec{e}_j))\} \in \mathcal{O}^{\ret}$ whenever $\{(x_j, \vec{e}_j),(y_j, \iota(\vec{e}_j))\} \in \mathcal{O}^{\ret}(\alpha)$, and similarly $\{(x_j, \vec{e}_j),(y_j, \iota(\vec{e}_j))\} \in \mathcal{O}^{\del}$ whenever $\{(x_j, \vec{e}_j),(y_j, \iota(\vec{e}_j))\} \in \mathcal{O}^{\del}(\alpha)$;
        \item for $1\le i \le \ell_2$, $\sL_{2\ell_1 + i}$ is the distribution conditioned on 
        \begin{align*}
            \mathcal{O}^{\ret}(\alpha) &\subseteq \mathcal{O}^{\ret},\\
            \mathcal{O}^{\del}(\alpha) &\subseteq \mathcal{O}^{\del},
        \end{align*}
        and that
        \begin{align*}
            \{\{(p_1,\vec{f}_1),(q_1,\vec{g}_1)\}, \dots, \{(p_{i}, \vec{f}_{i}), (q_{i}, \vec{g}_{i})\}\} \subseteq \mathcal{R}.
        \end{align*}
    \end{itemize}
    To see this, we may easily verify using standard exchangeability argument that
    \begin{itemize}
        \item during $\mathcal{O}$ stage, at every step, the $\{(x_i,\vec{e}_i),(y_i,\iota(\vec{e}_i))\}{-}\mathcal{O}$ switching forces the edge \newline $\{(x_i,\vec{e}_i),(y_i,\iota(\vec{e}_i))\}$ to be inside $\mathcal{O}^{\ret}_i \sqcup \mathcal{O}^{\del}_i$ while maintaining the rest of the non-forced edges in the noiseless random lift to be uniformly random. Moreover, the $(S,T){-}\support$ switching ensures that $\mathcal{R}_i$ is a uniformly random stub pairing on the stubs incident to $\mathcal{O}^{\del}_i$;
        \item during $\ret/\del$ stage, at every step, the $(\{(x_i,\vec{e}_i),(y_i,\iota(\vec{e}_i))\}, \mathcal{A}_{i-1}){-}\tau$ switching where $\tau \in \{\ret, \del\}$ forces the edge $\{(x_i,\vec{e}_i),(y_i,\iota(\vec{e}_i))\}$ to fall into the required collection among $\mathcal{O}^{\ret}_{\ell_1 + i}$ and $\mathcal{O}^{\del}_{\ell_1 + i}$, while maintaining that $\mathcal{O}^{\del}_{\ell_1 + i}$ is a uniformly random $r$-subset of $\mathcal{O}^{\ret}_{\ell_1 + i} \sqcup \mathcal{O}^{\del}_{\ell_1 + i}$ conditioned on that the edges in $\mathcal{A}_i = \mathcal{A}_{i-1} \cup \{\{(x_i,\vec{e}_i),(y_i,\iota(\vec{e}_i))\}\}$ fall into the required collections. Similarly, the $(S,T){-}\support$ switching ensures that $\mathcal{R}_{\ell_1 + i}$ is a uniformly random stub pairing on the stubs incident to $\mathcal{O}^{\del}_{\ell_1 + i}$;
        \item during $\mathcal{R}$ stage, at every step, $\{(p_i,\vec{f}_i),(q_i,\vec{g}_i)\}{-}\mathcal{R}$ switching  forces the edge $\{(p_i,\vec{f}_i),(q_i,\vec{g}_i)\}$ to be inside $\mathcal{R}_{2\ell_1 + i}$, while maintaining that the rest of the non-forced edges in $\mathcal{R}_{2\ell_1 + i}$ follow a uniformly random stub pairing conditioned on \[\{\{(p_1,\vec{f}_1),(q_1,\vec{g}_1)\}, \dots, \{(p_{i}, \vec{f}_{i}), (q_{i}, \vec{g}_{i})\}\} \subseteq \mathcal{R}_{2\ell_1 + i}.\]
    \end{itemize}
    Therefore, we conclude that the distribution of $(W_0, W_1, \dots, W_{2\ell_1 + \ell_2})$ is a joint coupling between $\mathcal{L} \equalscolon \mathcal{L}_0, \mathcal{L}_1, \mathcal{L}_2, \dots, \mathcal{L}_{2\ell_1 + \ell_2} \colonequals \mathcal{L}_{\alpha}$. Call this constructed joint coupling $\Lambda$.

    Using the coupling $\Lambda$, we may further upper bound \eqref{ineq:TV-coupling} as
    \begin{align*}
        &d_{\TV}\left(\sL(X_{\beta}: \beta \not\in N(\alpha) \vert X_{\alpha} = 1), \sL(X_{\beta}: \beta \not\in N(\alpha))\right)\\
        &\le \inf_{\Pi \in \mathfrak{C}(\sL_{\alpha}, \sL) } \Pr_{(W_{\alpha}, W) \sim \Pi}\left((X_{\beta}(W_{\alpha}): \beta \not\in N(\alpha)) \ne (X_{\beta}(W): \beta \not\in N(\alpha))\right)\\
        &\le \Pr_{(W_0, W_1, \dots, W_{2\ell_1 + \ell_2} = W_{\alpha}) \sim \Lambda} \left((X_{\beta}(W_0): \beta \not\in N(\alpha))\ne (X_{\beta}(W_{\alpha}): \beta \not\in N(\alpha))\right)\\
        &\le \EE_{(W_0, W_1, \dots, W_{2\ell_1 + \ell_2} = W_{\alpha}) \sim \Lambda} \left[\sum_{i=1}^{2\ell_1 + \ell_2} \sum_{\beta 
        \not\in N(\alpha)}\One\left\{X_{\beta}(W_{i-1}) \ne X_{\beta}(W_{i})\right\}\right]. \numberthis \label{ineq:TV-constructed-coupling}
    \end{align*}

    Next, for $1 \le i \le 2\ell_1 + \ell_2$, we bound
    \[\EE\left[\sum_{\beta 
        \not\in N(\alpha)}\One\left\{X_{\beta}(W_{i-1}) \ne X_{\beta}(W_{i})\right\}\right].\]
    We consider the three cases: $\sO$ stage, $\ret/\del$ stage, and $\sR$ stage.
    \begin{itemize}
        \item During $\mathcal{O}$ stage, at the $i$-th step for $1 \le i \le \ell_1$, $W_{i-1} = (\sO^{\ret}_{i-1}, \sO^{\del}_{i-1}, \sR_{i-1}) \sim \sL_{i-1}$ is drawn from the distribution of $(\sO^{\ret}, \sO^{\del}, \sR) \sim \sL$ conditioned on 
        \[\{\{(x_1,\vec{e}_1),(y_1,\iota(\vec{e}_1))\}, \dots, \{(x_{i-1},\vec{e}_{i-1}),(y_{i-1},\iota(\vec{e}_{i-1}))\}\} \subseteq \sO^{\ret} \sqcup \sO^{\del}.\]
        $W_i = (\sO^{\ret}_{i}, \sO^{\del}_{i}, \sR_{i})$ is then obtained from $W_{i-1}$ by applying $\{(x_i,\vec{e}_i),(y_i,\iota(\vec{e}_i))\}{-}\sO$ switching to $(\sO^{\ret}_{i-1}, \sO^{\del}_{i-1})$ to obtain $(\sO^{\ret}_{i}, \sO^{\del}_{i})$, and $(S,T){-}\support$ switching to $\sR_{i-1}$ to $\sR_i$, where $S \colonequals \stub(\mathcal{O}^{\del}_{i-1}) \setminus \stub(\mathcal{O}^{\del}_i))$, and $T \colonequals \stub(\mathcal{O}^{\del}_i) \setminus \stub(\mathcal{O}^{\del}_{i-1})$.

        Note that $|S| = |T| \le 1$, and moreover if they are not empty, then one of them is a singleton collection of a stub inside $\stub(\alpha)$. Then, any newly created edge in $\sR_i$ is incident to $\alpha$. Now suppose some edge $\gamma$ in $\sR_{i-1}$ is removed in $\sR_i$, then that edge is incident to $S$, in which case any realized cycle local configuration $\beta$ in $W_{i-1}$ that uses $\gamma \in \rho(\beta)$ must also use either $\{(x_i,\vec{e}_i),(a,\iota(\vec{e}_i))\}$ or $\{(b,\vec{e}_i),(y_i,\iota(\vec{e}_i))\}$ in $\sO^{\del}(\beta)$, which means that no realization of any $\beta \not\in N(\alpha)$ is destroyed in $W_i$ compared to $W_{i-1}$. The $(S,T){-}\support$ switching thus does not change the realization of any local configuration $\beta \not\in N(\alpha)$.

        On the other hand, the $\{(x_i,\vec{e}_i),(y_i,\iota(\vec{e}_i))\}{-}\mathcal{O}$ switching can at most change the realization of two edgess from $\sO^{\ret}_{i-1}$ and $\sO^{\del}_{i-1}$. If $\{(x_i,\vec{e}_i),(y_i,\iota(\vec{e}_i))\} \in \sO^{\ret}_{i-1} \sqcup \sO^{\del}_{i-1}$, the switching does nothing. Otherwise, $\{(x_i,\vec{e}_i),(a,\iota(\vec{e}_i))\}, \{(b,\vec{e}_i),(y_i,\iota(\vec{e}_i))\}$ are removed from $\sO^{\ret}_{i-1} \sqcup \sO^{\del}_{i-1}$, and $\{(x_i,\vec{e}_i),(y_i,\iota(\vec{e}_i))\}, \{(b,\vec{e}_i), (a,\iota(\vec{e}_i))\}$ are added back to form  $\sO^{\ret}_{i} \sqcup \sO^{\del}_{i}$. Call $\{(b,\vec{e}_i), (a,\iota(\vec{e}_i))\}$ the swapped edge, if it exists. Note that $\{(x_i,\vec{e}_i),(a,\iota(\vec{e}_i))\}, \{(b,\vec{e}_i),(y_i,\iota(\vec{e}_i))\}$, and $\{(x_i,\vec{e}_i),(y_i,\iota(\vec{e}_i))\}$ are all incident to $\alpha$, and thus the only way that the indicator for a local configuration $\beta \not\in N(\alpha)$ to be changed is if the swapped edge $\{(b,\vec{e}_i), (a,\iota(\vec{e}_i))\}$ exists and $\{(b,\vec{e}_i), (a,\iota(\vec{e}_i))\} \in \sO(\beta)$.

        As a result, if $X_{\beta}(W_{i-1}) \ne X_{\beta}(W_i)$ for some $\beta \not\in N(\alpha)$, then it must hold that $X_{\beta}(W_i) = 1$ and $\{(b,\vec{e}_i), (a,\iota(\vec{e}_i))\} \in \sO(\beta)$. We may then bound
        \begin{align*}
            &\EE\left[\sum_{\beta 
        \not\in N(\alpha)}\One\left\{X_{\beta}(W_{i-1}) \ne X_{\beta}(W_{i})\right\}\right]\\
        &\le \sum_{\beta 
        \not\in N(\alpha)} \Pr(X_{\beta}(W_i) = 1, \{(b,\vec{e}_i), (a,\iota(\vec{e}_i))\} \text{ exists}, \text{and } \{(b,\vec{e}_i), (a,\iota(\vec{e}_i))\} \in \sO(\beta))\\
        &\le \sum_{\beta 
        \not\in N(\alpha)} \Pr(X_{\beta}(W_i) = 1) \Pr (\{(b,\vec{e}_i), (a,\iota(\vec{e}_i))\} \in \sO(\beta) \vert X_{\beta}(W_i) = 1, \{(b,\vec{e}_i), (a,\iota(\vec{e}_i))\} \text{ exists})\\
        &\le \frac{K_3 D}{n}\sum_{\beta 
        \not\in N(\alpha)} \Pr(X_{\beta}(W_i) = 1),
        \end{align*}
        for some constant $K_3 = K_3(H) > 0$, where in the last step we use that conditioned on the realization of $W_i$ and that the swapped edge $\{(a,e_i),(b,e_i)\}$ exists, the random $\{(a,e_i),(b,e_i)\}$ is distributed uniformly among $\sC_{e} \setminus \{\{(x_1,e_1), (y_1,e_1)\}, \dots, \{(x_i,e_i), (y_i,e_i)\}\}$ where $\sC_{e}$ is the edge collection of color $e$, which has size at least $\frac{n}{k} - D = \Omega(n)$, and $|\sO(\beta)| \le 2D$ for $\beta \in I \setminus N(\alpha)$.
        
        Note that the same bound in Proposition~\ref{prop:p-sum-bound} as before holds even for $W_i \sim \sL_i$, since $\sL_i$ only conditions on $i \le 2D$ edges of $\sO(\alpha)$ to be inside $\sO^{\ret}_i \sqcup \sO^{\del}_i$, which does not asymptotically affect the bound for any $\Pr(X_{\beta}(W_i) = 1)$, as there are still $\Omega(n)$ available edges not fixed by this conditioning in any color class $\sC_{e}$. As a result,
        \begin{align*}
            &\EE\left[\sum_{\beta 
        \not\in N(\alpha)}\One\left\{X_{\beta}(W_{i-1}) \ne X_{\beta}(W_{i})\right\}\right]\\
        &\le \frac{K_3 D}{n}\sum_{\beta 
        \not\in N(\alpha)} \Pr(X_{\beta}(W_i) = 1)\\
        &\le \frac{K_3 D}{n} \cdot 2(D+1) \sum_{t=1}^D K_2^t. \numberthis \label{ineq:O-stage-bound}
        \end{align*}
        \item During $\ret/\del$ stage, at the $i$-th step for $1\le i \le \ell_1$, $W_{\ell_1 + i-1} = (\sO^{\ret}_{\ell_1 + i-1}, \sO^{\del}_{\ell_1 + i-1}, \sR_{\ell_1 + i-1}) \sim \sL_{\ell_1 + i-1}$ is drawn from the distribution of $(\sO^{\ret}, \sO^{\del}, \sR) \sim \sL$ conditioned on 
        \[\{\{(x_1, \vec{e}_1),(y_1, \iota(\vec{e}_1))\}, \dots, \{(x_{i}, \vec{e}_{i}),(y_{i}, \iota(\vec{e}_{i}))\}\} \subseteq \mathcal{O}^{\ret} \sqcup \mathcal{O}^{\del}\] and that for every $1 \le j \le i-1$, $\{(x_j, \vec{e}_j),(y_j, \iota(\vec{e}_j))\} \in \mathcal{O}^{\ret}$ whenever $\{(x_j, \vec{e}_j),(y_j, \iota(\vec{e}_j))\} \in \mathcal{O}^{\ret}(\alpha)$, and similarly $\{(x_j, \vec{e}_j),(y_j, \iota(\vec{e}_j))\} \in \mathcal{O}^{\del}$ whenever $\{(x_j, \vec{e}_j),(y_j, \iota(\vec{e}_j))\} \in \mathcal{O}^{\del}(\alpha)$.
        
        $W_{\ell_1 + i} = (\sO^{\ret}_{\ell_1 + i}, \sO^{\del}_{\ell_1 + i}, \sR_{\ell_1 + i})$ is then obtained from $W_{\ell_1 + i-1}$ by applying \newline $(\{(x_i, \vec{e}_i),(y_i, \iota(\vec{e}_i))\}, \mathcal{A}_{i-1}){-}\tau$ switching to $(\sO^{\ret}_{\ell_1 + i-1}, \sO^{\del}_{\ell_1 + i-1})$ to obtain $(\sO^{\ret}_{\ell_1 + i}, \sO^{\del}_{\ell_1 + i})$, and \newline $(S,T){-}\support$ switching to $\sR_{\ell_1 + i-1}$ to $\sR_{\ell_1 + i}$, where $S \colonequals \stub(\mathcal{O}^{\del}_{\ell_1 + i-1}) \setminus \stub(\mathcal{O}^{\del}_{\ell_1 + i}))$, and $T \colonequals \stub(\mathcal{O}^{\del}_{\ell_1 + i}) \setminus \stub(\mathcal{O}^{\del}_{\ell_1 + i-1})$.

        Note that $|S| = |T| \le 2$, and either $S$ or $T$ is a subset of $\stub(\alpha)$. Similarly as before, any newly created edge in $\sR_{\ell_1 + i}$ is incident to $\alpha$. Now suppose some edge $\gamma$ in $\sR_{\ell_1 + i-1}$ is removed in $\sR_{\ell_1 + i}$, then that edge is incident to $S$, in which case any realized cycle local configuration $\beta$ in $W_{i-1}$ that uses $\gamma \in \rho(\beta)$ must also use an edge in $\sO^{\del}(\beta)$ that is incident to $\{(x_i, \vec{e}_i),(y_i, \iota(\vec{e}_i))\} \subseteq \stub(\alpha)$, which means that no realization of any $\beta \not\in N(\alpha)$ is destroyed in $W_{\ell_1 + i}$ compared to $W_{\ell_1 + i-1}$. The $(S,T){-}\support$ switching thus does not change the realization of any local configuration $\beta \not\in N(\alpha)$. 

        On the other hand, $(\{(x_i, \vec{e}_i),(y_i, \iota(\vec{e}_i))\}, \mathcal{A}_{i-1}){-}\tau$ switching at most switches the $\ret/\del$ collection of $\{(x_i, \vec{e}_i),(y_i, \iota(\vec{e}_i))\}$ with another edge $\{(a,\vec{f}), (b,\iota(\vec{f}))\}$. Call $\{(a,\vec{f}), (b,\iota(\vec{f}))\}$ the swapped edge, if it exists. Note that $\{(x_i, \vec{e}_i),(y_i, \iota(\vec{e}_i))\} \in \sO(\alpha)$, and thus the only way that the indicator for a local configuration $\beta 
        \not\in N(\alpha)$ to be changed is if the swapped edge $\{(a,\vec{f}), (b,\iota(\vec{f}))\}$ exists, and the swapping flipped the realization of $\beta$ in $W_{\ell_1 + i - 1}$ and $W_{\ell_1 + i}$. There are two cases to consider:
        \begin{itemize}
            \item If $\{(x_i, \vec{e}_i),(y_i, \iota(\vec{e}_i))\} \in \sO^{\ret}(\alpha)$, then $\{(a,\vec{f}), (b,\iota(\vec{f}))\} \in \sO^{\ret}_{\ell_1 + i - 1}$, $\{(x_i, \vec{e}_i),(y_i, \iota(\vec{e}_i))\} \in \sO^{\del}_{\ell_1 + i - 1}$, $\{(a,\vec{f}), (b,\iota(\vec{f}))\} \in \sO^{\del}_{\ell_1 + i}$, and $\{(x_i, \vec{e}_i),(y_i, \iota(\vec{e}_i))\} \in \sO^{\ret}_{\ell_1 + i}$;
            \item If $\{(x_i, \vec{e}_i),(y_i, \iota(\vec{e}_i))\} \in \sO^{\del}(\alpha)$, then $\{(a,\vec{f}), (b,\iota(\vec{f}))\} \in \sO^{\del}_{\ell_1 + i - 1}$, $\{(x_i, \vec{e}_i),(y_i, \iota(\vec{e}_i))\} \in \sO^{\ret}_{\ell_1 + i - 1}$, $\{(a,\vec{f}), (b,\iota(\vec{f}))\} \in \sO^{\ret}_{\ell_1 + i}$, and $\{(x_i, \vec{e}_i),(y_i, \iota(\vec{e}_i))\} \in \sO^{\del}_{\ell_1 + i}$.
        \end{itemize}
        Thus, if we use $\gamma \colonequals \{(a,\vec{f}), (b,\iota(\vec{f}))\}$ to denote the swapped edge, we have
        \begin{align*}
            &\One\left\{X_{\beta}(W_{\ell_1 + i-1}) \ne X_{\beta}(W_{\ell_1 + i})\right\} \\
            &= \One\left\{X_{\beta}(W_{\ell_1 + i-1}) \ne X_{\beta}(W_{\ell_1 + i}), \gamma \text{ exists}\right\}\\
            &\le \begin{cases}
                \One\{X_{\beta}(W_{\ell_1 + i - 1}) =1, \gamma \in \sO^{\ret}(\beta)\} + \One\{X_{\beta}(W_{\ell_1 + i}) =1, \gamma \in \sO^{\del}(\beta)\}\\  \hspace{3.5in} \text{ if } \{(x_i, \vec{e}_i),(y_i, \iota(\vec{e}_i))\} \in \sO^{\ret}(\alpha),\\
                \One\{X_{\beta}(W_{\ell_1 + i - 1}) =1, \gamma \in \sO^{\del}(\beta)\} + \One\{X_{\beta}(W_{\ell_1 + i}) =1, \gamma \in \sO^{\ret}(\beta)\} \\
                \hspace{3.5in} \text{ if } \{(x_i, \vec{e}_i),(y_i, \iota(\vec{e}_i))\} \in \sO^{\del}(\alpha).
            \end{cases}
        \end{align*}
        We deal with the two cases symmetrically.

        First, let us suppose $\{(x_i, \vec{e}_i),(y_i, \iota(\vec{e}_i))\} \in \sO^{\ret}(\alpha)$. Then,
        \begin{align*}
            &\EE[\One\left\{X_{\beta}(W_{\ell_1 + i-1}) \ne X_{\beta}(W_{\ell_1 + i})\right\}]\\
            &\le \EE\left[\One\{X_{\beta}(W_{\ell_1 + i - 1}) =1, \gamma \in \sO^{\ret}(\beta)\} + \One\{X_{\beta}(W_{\ell_1 + i}) =1, \gamma \in \sO^{\del}(\beta)\}\right]\\
            &\le \Pr\left(X_{\beta}(W_{\ell_1 + i - 1}) =1\right) \Pr\left(\gamma \in \sO^{\ret}(\beta) \vert X_{\beta}(W_{\ell_1 + i - 1}) =1, \gamma \text{ exists}\right)\\
            &\quad + \Pr\left(X_{\beta}(W_{\ell_1 + i}) =1\right) \Pr\left(\gamma \in \sO^{\del}(\beta) \vert X_{\beta}(W_{\ell_1 + i}) =1, \gamma \text{ exists}\right)\\
            &\le \Pr\left(X_{\beta}(W_{\ell_1 + i - 1}) =1\right) \frac{c^{\ret}(\beta)}{\frac{nd}{2} - r - c^{\ret, i-1}}  + \Pr\left(X_{\beta}(W_{\ell_1 + i}) =1\right) \frac{c^{\del}(\beta)}{r - c^{\del, i}},
        \end{align*}
        where in the last step, we used that conditioning on $W_{\ell_1 + i}$ and that the swapped edge $\gamma$ exists, then $\gamma$ is distributed uniformly at random among $\sO^{\del}_{\ell_1 + i}$ that has not been conditioned on in $\sL_{\ell_1 + i}$ in $W_{\ell_1 + i}$, and similarly conditioning on $W_{\ell_1 + i - 1}$ and that the swapped edge $\gamma$ exists, then $\gamma$ is distributed uniformly at random among $\sO^{\ret}_{\ell_1 + i - 1}$ that has not been conditioned on in $\sL_{\ell_1 + i-1}$ in $W_{\ell_1 + i-1}$. The quantity $c^{\ret, i}$ ($c^{\del,i}$ resp.) denotes the number of conditioned edges among $\{\{(x_1, \vec{e}_1),(y_1, \iota(\vec{e}_1))\}, \dots, \{(x_i, \vec{e}_i),(y_i, \iota(\vec{e}_i))\}\}$ that belong to $\sO^{\ret}(\alpha)$ ($\sO^{\del}(\alpha)$ resp.), and the denominator is thus the size of the collection of edges that are not fixed by the conditioning, from which $\gamma$ is drawn.

        Analogous to Fact~\ref{fact:config-prob}, since $\sL_{\ell_1 + i}$ only conditions on $\ell_1 \le 2D$ edges of $\sO(\alpha)$ to be inside $\sO^{\ret}_{\ell_1 + i}$, $\sO^{\del}_{\ell_1 + i}$, or $\sO^{\ret}_{\ell_1 + i} \sqcup \sO^{\del}_{\ell_1 + i}$, we have
        \begin{align*}
            &\Pr\left(X_{\beta}(W_{\ell_1 + i}) =1\right)\\
            &= \left(\prod_{e \in E_1(H)}\frac{(m-O(D)-2c_{e}(\beta)-1)!!}{(m-O(D)-1)!!}\right) \left(\prod_{e \in E_2(H)} \frac{(m-O(D) - c_{e}(\beta))!}{(m-O(D))!}\right)\\
            &\quad \cdot \left( \frac{\binom{\frac{dn}{2} -i - c(\beta)}{r - c^{\del,i} - c^{\del}(\beta)}}{\binom{\frac{dn}{2} - i}{r - c^{\del,i}}} \frac{(2r -2\rho(\beta)-1)!!}{(2r-1)!!} \right),
            \intertext{where the third factor is the probability that the $c^{\del}(\beta) = |\sO^{\del}(\beta)|$ edges are deleted and the $c^{\ret}(\beta) = |\sO^{\ret}(\beta)|$ edges are retained when $i = c^{\del,i} + c^{\ret, i}$ edges are already fixed in $\sO^{\ret}_{\ell_1 + i}$ and $\sO^{\del}_{\ell_1 + i}$, and $r - c^{\del,i}$ number of edges still need to be deleted uniformly at random from the remaining $\frac{dn}{2} - i$ edges, and that the $\rho(\beta) = |\sR(\beta)|$ edges are formed in $\sR_{\ell_1 + i}$. Since $c(\beta) =|\sO(\beta)| \le 2D$, and $\frac{n}{k} - O(D) = \Omega(n)$, we have for the same constant $K = K(H) > 0$ as in Proposition~\ref{prop:p-alpha-bound} that}
            &\le \left(\frac{K}{n}\right)^{c(\beta)} \frac{\frac{(r-c^{\del,i})!}{(r-c^{\del,i}-c^{\del}(\beta))!} \frac{\left(\frac{dn}{2} -(i - c^{\del,i}) - r\right)!}{\left(\frac{dn}{2} -(i - c^{\del,i}) - r - (c(\beta) - c^{\del}(\beta)))\right)}}{\frac{\left(\frac{dn}{2} - i\right)!}{\left(\frac{dn}{2} -i - c(\beta)\right)!} } \frac{(2r-2\rho(\beta)-1)!!}{(2r-1)!!}\\
            &\le \left(\frac{2K}{n}\right)^{c(\beta)} \frac{ \left(\frac{dn}{2} -c^{\ret,i} - r\right)^{c(\beta) - c^{\del}(\beta)}}{\left(\frac{dn}{2}\right)^{c(\beta)}} \frac{(r-c^{\del,i})!}{(r-c^{\del,i}-c^{\del}(\beta))!}\frac{(2r-2\rho(\beta)-1)!!}{(2r-1)!!}\\
            &\le \left(\frac{2K}{n}\right)^{c(\beta)} \frac{ \left(\frac{dn}{2} -c^{\ret,i} - r\right)^{c(\beta) - c^{\del}(\beta)} (r  - c^{\del,i})^{c^{\del}(\beta) - \rho(\beta)}}{\left(\frac{dn}{2}\right)^{c(\beta)}}, 
        \end{align*}
        and thus
        \begin{align*}
            &\Pr\left(X_{\beta}(W_{\ell_1 + i}) =1\right) \frac{c^{\del}(\beta)}{r - c^{\del, i}}\\
            &\le c^{\del}(\beta)\left(\frac{2K}{n}\right)^{c(\beta)} \frac{ \left(\frac{dn}{2} -c^{\ret,i} - r\right)^{c(\beta) - c^{\del}(\beta)} (r  - c^{\del,i})^{c^{\del}(\beta) - \rho(\beta)-1}}{\left(\frac{dn}{2}\right)^{c(\beta)}}\\
            &\le \begin{cases}
                (2D)\left(\frac{2K}{n}\right)^{c(\beta)} \frac{1}{\left(\frac{dn}{2}\right)^{\rho(\beta)+1}} & \quad \text{ if } c^{\del}(\beta) \ge \rho(\beta) + 1,\\
                (2D)\left(\frac{2K}{n}\right)^{c(\beta)} \frac{1}{\left(\frac{dn}{2}\right)^{\rho(\beta)}} & \quad \text{ if } c^{\del}(\beta) = \rho(\beta) \ge 1,\\
                0 & \quad \text{ if } c^{\del}(\beta) = 0.
            \end{cases} \\
            &\le \begin{cases}
                D\left(\frac{K_1}{n}\right)^{c(\beta) + \rho(\beta)+1} & \quad \text{ if } c^{\del}(\beta) \ge \rho(\beta) + 1,\\
                D\left(\frac{K_1}{n}\right)^{c(\beta) + \rho(\beta)} & \quad \text{ if } c^{\del}(\beta) = \rho(\beta) \ge 1,\\
                0 & \quad \text{ if } c^{\del}(\beta) = 0
            \end{cases} \numberthis \label{ineq:p-bound-del}
        \end{align*}

        for the same constant $K_1 = K_1(H) > 0$ in Proposition~\ref{prop:p-alpha-bound}. Similarly, we have
        \begin{align*}
            &\Pr\left(X_{\beta}(W_{\ell_1 + i-1}) =1\right) \frac{c^{\ret}(\beta)}{\frac{nd}{2} - r - c^{\ret, i}}\\
            &\le \left(\frac{2K}{n}\right)^{c(\beta)} \frac{ \left(\frac{dn}{2} -c^{\ret,i} - r\right)^{c(\beta) - c^{\del}(\beta)} (r  - c^{\del,i})^{c^{\del}(\beta) - \rho(\beta)}}{\left(\frac{dn}{2}\right)^{c(\beta)}} \frac{c^{\ret}(\beta)}{\frac{nd}{2} - r - c^{\ret, i}}\\
            &= c^{\ret}(\beta)\left(\frac{2K}{n}\right)^{c(\beta)} \frac{ \left(\frac{dn}{2} -c^{\ret,i} - r\right)^{c(\beta) - c^{\del}(\beta) - 1} (r  - c^{\del,i})^{c^{\del}(\beta) - \rho(\beta)}}{\left(\frac{dn}{2}\right)^{c(\beta)}}\\
            &\le \begin{cases}
                D\left(\frac{K_1}{n}\right)^{c(\beta) + \rho(\beta) + 1}  & \quad \text{ if } c(\beta) \ge c^{\del}(\beta) + 1,\\
                0 & \quad \text{ if } c(\beta) = c^{\del}(\beta),
            \end{cases}  \numberthis \label{ineq:p-bound-ret}
        \end{align*}
        where the second case uses that $c^{\ret}(\beta) = c(\beta) - c^{\del}(\beta)$.

        For $\beta \in I_t$, if we denote $f = c^{\del}(\beta)$ and $h = \rho(\beta)$, then \[c(\beta) + \rho(\beta) = c^{\ret}(\beta) + c^{\del}(\beta)  + \rho(\beta) = t + f.\] By \eqref{ineq:p-bound-del}, we may then reuse the bound \eqref{ineq:count-bound} and perform the same analysis as in Section~\ref{sec:b1-bound} to get
        \begin{align*}
            &\sum_{\beta \not\in N(\alpha)} \Pr\left(X_{\beta}(W_{\ell_1 + i}) =1\right) \frac{c^{\del}(\beta)}{r - c^{\del, i}}\\
            &\le \sum_{t=1}^D \left(\sum_{\substack{0 \le h \le t,\\ h < f \le 2h}} \sum_{\substack{\beta \in I_t \setminus N(\alpha):\\ f = c^{\del}(\beta),\\
        h = \rho(\beta)}} D\left(\frac{K_1}{n}\right)^{t+f+1} + \sum_{\substack{1 \le h \le t,\\ f=h}} \sum_{\substack{\beta \in I_t \setminus N(\alpha):\\ f = c^{\del}(\beta),\\
        h = \rho(\beta)}} D\left(\frac{K_1}{n}\right)^{t+f}\right)\\
        &\le D\sum_{t=1}^D \left(\sum_{\substack{0 \le h \le t,\\ h < f \le 2h}} (2d^2 n)^{t + 2f-2h} (2h)^{4h-2f}\left(\frac{K_1}{n}\right)^{t+f+1} + \sum_{\substack{1 \le h \le t,\\ h = f}} (2d^2 n)^{t + 2f-2h} (2h)^{4h-2f}\left(\frac{K_1}{n}\right)^{t+f}\right)\\
        &\le D\sum_{t=1}^D K_2^t\left(\sum_{\substack{0 \le h \le t,\\ h < f \le 2h}} \left(\frac{h^2}{n}\right)^{2h-f} + \sum_{1\le h \le t}\left(\frac{h^2}{n}\right)^{h}\right)\\
        &\le \frac{2D^2(D+1)}{n} \sum_{t=1}^D K_2^t, \numberthis \label{ineq:p-sum-bound-del}
        \end{align*}
        and similarly by \eqref{ineq:p-bound-ret},
        \begin{align*}
            &\sum_{\beta \not\in N(\alpha)} \Pr\left(X_{\beta}(W_{\ell_1 + i - 1}) =1\right) \frac{c^{\ret}(\beta)}{\frac{nd}{2} - r - c^{\ret, i-1}}\\
            &\le \frac{2D^2(D+1)}{n} \sum_{t=1}^D K_2^t, \numberthis \label{ineq:p-sum-bound-ret}.
        \end{align*}
        Combining \eqref{ineq:p-sum-bound-del} and \eqref{ineq:p-sum-bound-ret}, we get
        \begin{align*}
            &\EE\left[ \sum_{\beta \not\in N(\alpha)} \One\left\{X_{\beta}(W_{\ell_1 + i-1}) \ne X_{\beta}(W_{\ell_1 + i})\right\}\right]\\
            &\le \sum_{\beta \not\in N(\alpha)} \left(\Pr\left(X_{\beta}(W_{\ell_1 + i - 1}) =1\right) \frac{c^{\ret}(\beta)}{\frac{nd}{2} - r - c^{\ret, i-1}}  + \Pr\left(X_{\beta}(W_{\ell_1 + i}) =1\right) \frac{c^{\del}(\beta)}{r - c^{\del, i}}\right)\\
            &\le \frac{4D^2(D+1)}{n}\sum_{t=1}^D K_2^t. \numberthis \label{ineq:retdel-stage-bound}
        \end{align*}
        By a symmetric argument, the same bound holds for the case of $\{(x_i,e_i), (y_i,e_i)\} \in \sO^{\del}(\alpha)$.

        \item During $\sR$ stage, at the $i$-th step for $1 \le i \le \ell_2$, $W_{2\ell_1 + i - 1} = (\sO^{\ret}_{2\ell_1 + i - 1}, \sO^{\del}_{2\ell_1 + i - 1}, \sR_{2\ell_1 + i - 1}) \sim \sL_{2\ell_1 + i - 1}$ is drawn from the distribution of $(\sO^{\ret}, \sO^{\del}, \sR) \sim \sL$ conditioned on
        \begin{align*}
            \mathcal{O}^{\ret}(\alpha) &\subseteq \mathcal{O}^{\ret},\\
            \mathcal{O}^{\del}(\alpha) &\subseteq \mathcal{O}^{\del},
        \end{align*}
        and that
        \begin{align*}
            \{\{(p_1,\vec{f}_1), (q_1,\vec{g}_1)\}, \dots, \{(p_{i-1},\vec{f}_{i-1}), (q_{i-1},\vec{g}_{i-1})\}\} \subseteq \mathcal{R}.
        \end{align*}

        $W_{2\ell_1 + i} = (\sO^{\ret}_{2\ell_1 + i}, \sO^{\del}_{2\ell_1 + i}, \sR_{2\ell_1 + i})$ is then obtained from $W_{2\ell_1 + i - 1}$ by applying \newline $\{(p_i,\vec{f}_i), (q_i,\vec{g}_i)\}{-}\sR$ switching to $\sR_{2\ell_1 + i-1}$ to obtain $\sR_{2\ell_1 + i}$.

        If the $\{(p_i,\vec{f}_i), (q_i,\vec{g}_i)\}{-}\sR$ switching does nothing, then $W_{2\ell_1 + i} = W_{2\ell_1 + i-1}$. If the \newline $\{(p_i,\vec{f}_i), (q_i,\vec{g}_i)\}{-}\sR$ switching does take place, then $\{(p_i,\vec{f}_i), (a,\vec{e}_a)\}, \{(b,\vec{e}_b), (q_i,\vec{g}_i)\}$ are removed from $\sR_{2\ell_1 + i - 1}$, and $\{(p_i,\vec{f}_i), (q_i,\vec{g}_i)\}, \{(b,\vec{e}_b), (a,\vec{e}_a)\}$ are added back to form $\sR_{2\ell_1 + i}$. Call $\{(b,\vec{e}_b), (a,\vec{e}_a)\}$ the swapped edge, if it exists. Note that $\{(p_i,\vec{f}_i), (a,\vec{e}_a)\}, \{(b,\vec{e}_b), (q_i,\vec{g}_i)\}$, and $\{(p_i,\vec{f}_i), (q_i,\vec{g}_i)\}$ are all incident to $\alpha$, and thus the only way that the indicator for a local configuration $\beta \not\in N(\alpha)$ to be changed is if the swapped edge $\{(b,\vec{e}_b), (a,\vec{e}_a)\}$ exists and $\{(b,\vec{e}_b), (a,\vec{e}_a)\} \in \sR(\beta)$.

        As a result, if $X_{\beta}(W_{2\ell_1 + i-1}) \ne X_{\beta}(W_{2\ell_1 + i})$ for some $\beta \not\in N(\alpha)$, then it must hold that $X_{\beta}(W_{2\ell_1 + i}) = 1$ and $\{(b,\vec{e}_b), (a,\vec{e}_a)\} \in \sR(\beta)$. We may then bound
        \begin{align*}
            &\EE\left[\sum_{\beta 
        \not\in N(\alpha)}\One\left\{X_{\beta}(W_{2\ell_1 + i-1}) \ne X_{\beta}(W_{2\ell_1 + i})\right\}\right]\\
        &\le \sum_{\beta 
        \not\in N(\alpha)} \Pr(X_{\beta}(W_{2\ell_1 + i}) = 1, \{(b,\vec{e}_b), (a,\vec{e}_a)\} \text{ exists}, \text{and } \{(b,\vec{e}_b), (a,\vec{e}_a)\} \in \sR(\beta))\\
        &\le \sum_{\beta 
        \not\in N(\alpha)} \Pr(X_{\beta}(W_{2\ell_1 + i}) = 1) \Pr (\{(b,\vec{e}_b), (a,\vec{e}_a)\} \in \sR(\beta) \vert X_{\beta}(W_{2\ell_1 + i}) = 1, \{(b,\vec{e}_b), (a,\vec{e}_a)\} \text{ exists})\\
        &\le \sum_{\beta 
        \not\in N(\alpha)} \Pr(X_{\beta}(W_{2\ell_1 + i}) = 1) \frac{\rho(\beta)}{r - i},
        \end{align*}
        where in the last step we use that conditioned on the realization of $W_{2\ell_1 + i}$ and that the swapped edge $\{(b,\vec{e}_b), (a,\vec{e}_a)\}$ exists, the random $\{(b,\vec{e}_b), (a,\vec{e}_a)\}$ is distributed uniformly among the unconditioned rematched edges \[\sR_{2\ell_1 + i} \setminus \{\{(p_1,\vec{f}_1), (q_1,\vec{g}_1)\}, \dots, \{(p_{i},\vec{f}_{i}), (q_{i},\vec{g}_{i})\}\}.\]

        Analogous to Fact~\ref{fact:config-prob}, since $\sL_{2\ell_1 + i}$ only conditions on $\ell_1 \le 2D$ edges of $\sO^{\ret}(\alpha)$ and $\sO^{\del}(\alpha)$ to be inside $\sO^{\ret}_{2\ell_1 + i}$ and $\sO^{\del}_{2\ell_1 + i}$ respectively, and $i$ edges of $\sR(\alpha)$ to be inside $\sR_{2\ell_1 + i}$, we have
        \begin{align*}
            &\Pr(X_{\beta}(W_{2\ell_1 + i}) = 1)\\
            &= \left(\prod_{e \in E_1(H)}\frac{(m-O(D)-2c_{e}(\beta)-1)!!}{(m-O(D)-1)!!}\right) \left(\prod_{e \in E_2(H)} \frac{(m-O(D) - c_{e}(\beta))!}{(m-O(D))!}\right)\\
            &\quad \cdot \left( \frac{\binom{\frac{dn}{2} -c(\alpha) - c(\beta)}{r - c^{\del}(\alpha) - c^{\del}(\beta)}}{\binom{\frac{dn}{2} - c(\alpha)}{r - c^{\del}(\alpha)}} \frac{(2r -2i -2\rho(\beta)-1)!!}{(2r-2i-1)!!} \right)
            \intertext{where the third factor is the probability that the $c^{\del}(\beta) = |\sO^{\del}(\beta)|$ edges are deleted and the $c^{\ret}(\beta) = |\sO^{\ret}(\beta)|$ edges are retained when $c(\alpha) = |\sO(\alpha)|$ edges are already fixed in $\sO^{\ret}_{2\ell_1 + i}$ and $\sO^{\del}_{2\ell_1 + i}$, and $r - c^{\del}(\alpha)$ number of edges still need to be deleted uniformly at random from the remaining $\frac{dn}{2} - c(\alpha)$ edges, and that the $\rho(\beta) = |\sR(\beta)|$ edges are formed in $\sR_{2\ell_1 + i}$ conditioned on $i$ edges $\{(p_1,\vec{f}_1), (q_1,\vec{g}_1)\}, \dots, \{(p_{i},\vec{f}_{i}), (q_{i},\vec{g}_{i})\} \in \sR_{2\ell_1 + i}$. Since $c(\beta) =|\sO(\beta)| \le 2D$, and $\frac{n}{k} - O(D) = \Omega(n)$, we have for the same constant $K = K(H) > 0$ as in Proposition~\ref{prop:p-alpha-bound} that}
            &\le \left(\frac{K}{n}\right)^{c(\beta)} \frac{\frac{(r-c^{\del}(\alpha))!}{(r-c^{\del}(\alpha)-c^{\del}(\beta))!} \frac{\left(\frac{dn}{2} -(c(\alpha) - c^{\del}(\alpha)) - r\right)!}{\left(\frac{dn}{2} -(c(\alpha) - c^{\del}(\alpha)) - r - (c(\beta) - c^{\del}(\beta)))\right)}}{\frac{\left(\frac{dn}{2} - c(\alpha)\right)!}{\left(\frac{dn}{2} -c(\alpha) - c(\beta)\right)!} } \frac{(2r-2i-2\rho(\beta)-1)!!}{(2r-2i-1)!!}\\
            &\le \left(\frac{2K}{n}\right)^{c(\beta)} \frac{ \left(\frac{dn}{2} -c^{\ret}(\alpha) - r\right)^{c(\beta) - c^{\del}(\beta)}}{\left(\frac{dn}{2}\right)^{c(\beta)}} \frac{(r-c^{\del}(\alpha))!}{(r-c^{\del}(\alpha)-c^{\del}(\beta))!}\frac{(2r-2i-2\rho(\beta)-1)!!}{(2r-2i-1)!!}\\
            &\le c^{\rho(\beta)}\left(\frac{2K}{n}\right)^{c(\beta)} \frac{ \left(\frac{dn}{2} -c^{\ret}(\alpha) - r\right)^{c(\beta) - c^{\del}(\beta)}}{\left(\frac{dn}{2}\right)^{c(\beta)}} \frac{(r  - c^{\del}(\alpha))^{c^{\del}(\beta)}}{(r-i)^{\rho(\beta)}}.  
        \end{align*}
        Thus, for $\beta \in I_t$, if we denote $f = c^{\del}(\beta)$ and $h = \rho(\beta)$, we have
        \begin{align*}
            c(\beta) &= t+f-h,\\
            i \le h &\le f \le 2h,\\
        \end{align*}
        and for the same constant $K_1 = K_1(H) > 0$ as in Proposition~\ref{prop:p-alpha-bound}, we get
        \begin{align*}
            &\Pr(X_{\beta}(W_{2\ell_1 + i}) = 1) \frac{\rho(\beta)}{r - i}\\
            &\le c^{h}\left(\frac{2K}{n}\right)^{t+f-h} \frac{ \left(\frac{dn}{2} -c^{\ret}(\alpha) - r\right)^{t-h}}{\left(\frac{dn}{2}\right)^{t+f-h}} \frac{(r  - c^{\del}(\alpha))^{f}}{(r-i)^{h}}\frac{h}{r - i}\\
            &\le c^D h\left(\frac{2K}{n}\right)^{t+f-h} \frac{ \left(\frac{dn}{2} -c^{\ret}(\alpha) - r\right)^{t-h} (r-i)^{f-h-1}}{\left(\frac{dn}{2}\right)^{t+f-h}}\\
            &\le \begin{cases}
                Dc^D \left(\frac{K_1}{n}\right)^{t+f - 1} & \quad \text{ if } f \ge h + 1,\\
                Dc^D \left(\frac{K_1}{n}\right)^{t+f} & \quad \text{ if } f= h \ge 1,\\
                0 & \quad \text{ if } h = 0.
            \end{cases} \numberthis\label{ineq:p-bound-r}
        \end{align*}
        By \eqref{ineq:p-bound-r}, similarly as before we may then reuse the bound \eqref{ineq:count-bound} and get,
        \begin{align*}
            &\sum_{\beta \not\in N(\alpha)} \Pr(X_{\beta}(W_{2\ell_1 + i}) = 1) \frac{\rho(\beta)}{r - i}\\
            &\le \sum_{t=1}^D \left(\sum_{\substack{0 \le h \le t,\\ h < f \le 2h}} \sum_{\substack{\beta \in I_t \setminus N(\alpha):\\ f = c^{\del}(\beta),\\
        h = \rho(\beta)}} Dc^D\left(\frac{K_1}{n}\right)^{t+f+1} + \sum_{\substack{1 \le h \le t,\\ f=h}} \sum_{\substack{\beta \in I_t \setminus N(\alpha):\\ f = c^{\del}(\beta),\\
        h = \rho(\beta)}} Dc^D\left(\frac{K_1}{n}\right)^{t+f}\right)\\
        &\le Dc^D\sum_{t=1}^D \left(\sum_{\substack{0 \le h \le t,\\ h < f \le 2h}} (2d^2 n)^{t + 2f-2h} (2h)^{4h-2f}\left(\frac{K_1}{n}\right)^{t+f+1} + \sum_{\substack{1 \le h \le t,\\ h = f}} (2d^2 n)^{t + 2f-2h} (2h)^{4h-2f}\left(\frac{K_1}{n}\right)^{t+f}\right)\\
        &\le Dc^D\sum_{t=1}^D K_2^t\left(\sum_{\substack{0 \le h \le t,\\ h < f \le 2h}} \left(\frac{h^2}{n}\right)^{2h-f} + \sum_{1\le h \le t}\left(\frac{h^2}{n}\right)^{h}\right)\\
        &\le \frac{2D^2(D+1)c^D}{n} \sum_{t=1}^D K_2^t.
        \end{align*}
        As a result,
        \begin{align*}
            &\EE\left[\sum_{\beta 
        \not\in N(\alpha)}\One\left\{X_{\beta}(W_{2\ell_1 + i-1}) \ne X_{\beta}(W_{2\ell_1 + i})\right\}\right]\\
        &\le \sum_{\beta 
        \not\in N(\alpha)} \Pr(X_{\beta}(W_{2\ell_1 + i}) = 1) \frac{\rho(\beta)}{r - i}\\
        &\le \frac{2D^2(D+1)c^D}{n} \sum_{t=1}^D K_2^t. \numberthis \label{ineq:R-stage-bound}
        \end{align*}

    \end{itemize}

    \paragraph{Putting together the Bound for $b_3$:}

    Using \eqref{ineq:TV-constructed-coupling}, \eqref{ineq:O-stage-bound}, \eqref{ineq:retdel-stage-bound}, and \eqref{ineq:R-stage-bound}, and that $\ell_1 = c(\alpha), \ell_2 = \rho(\alpha) \le 2D$, we get for every $\alpha \in I$,
    \begin{align*}
        &d_{\TV}\left(\sL(X_{\beta}: \beta \not\in N(\alpha) \vert X_{\alpha} = 1), \sL(X_{\beta}: \beta \not\in N(\alpha))\right)\\
        &\le \EE_{(W_0, W_1, \dots, W_{2\ell_1 + \ell_2} = W_{\alpha}) \sim \Lambda} \left[\sum_{i=1}^{2\ell_1 + \ell_2} \sum_{\beta 
        \not\in N(\alpha)}\One\left\{X_{\beta}(W_{i-1}) \ne X_{\beta}(W_{i})\right\}\right]\\
        &\le \ell_1 \frac{2K_6 D(D+1)}{n} \sum_{t=1}^D K_2^t + \ell_1 \frac{4D^2(D+1)}{n}\sum_{t=1}^D K_2^t + \ell_2 \frac{2D^2(D+1)c^D}{n} \sum_{t=1}^D K_2^t\\
        &\le \frac{8K_6D^3(D+1)}{n} \sum_{t=1}^D K_2^t.
    \end{align*}
    
    Plugging this bound back to \eqref{eq:b3-TV} and using Proposition~\ref{prop:p-sum-bound}, we get
    \begin{align*}
        b_3 &\le 2\sum_{\alpha \in I} p_{\alpha} \cdot d_{\TV}\left(\sL(X_{\beta}: \beta \not\in N(\alpha) \vert X_{\alpha} = 1), \sL(X_{\beta}: \beta \not\in N(\alpha))\right)\\
        &\le \frac{16K_6D^3(D+1)}{n} \sum_{t=1}^D K_2^t \sum_{\alpha \in I} p_{\alpha}\\
        &\le \frac{32K_6D^3(D+1)^2}{n}\left(\sum_{t=1}^D K_2^t\right)^2,
    \end{align*}
    which is $o(1)$ provided that $D \le c_3\log(n)$ for a constant $c_3 = c_3(H) > 0$.

    \subsection{Mean Computation} \label{sec:mean-computation}

    Finally, let us compute the expected number of cycles $\mu_t = \EE[Z_t] = \sum_{\alpha \in I_t} p_{\alpha}$.

    Consider a local configuration $\alpha \in I_t$ of length-$t$ cycle. List its $t$ cycle edges with types in cyclic order as
    \begin{align*}
        \cyc(\alpha) \colonequals ((\{(v_1, \vec{f}_1), (v_2,\vec{g}_1)\}, \tau_1), (\{(v_2, \vec{f}_2), (v_3,\vec{g}_2)\}, \tau_2), \dots, (\{(v_t, \vec{f}_t), (v_1,\vec{g}_t)\},\tau_t)),
    \end{align*}
    where $\tau_i \in \{\originalretained, \rematched\}$ denotes the type of cycle edge $\{(v_i, \vec{f}_i), (v_{i+1},\vec{g}_i)\}$. Note that $\sO^{\ret}(\alpha) \sqcup \sR(\alpha) = \{\{(v_1, \vec{f}_1), (v_2,\vec{g}_1)\}, \{(v_2, \vec{f}_2), (v_3,\vec{g}_2)\}, \dots, \{(v_t, \vec{f}_t), (v_1,\vec{g}_t)\}\}$. Let
    \begin{align*}
        \sW_t \colonequals \{w: \exists \alpha \in I_t, \cyc(\alpha) = w\}
    \end{align*}
    to be the collection of all possible typed edge sequences of local configurations of length-$t$ cycles.

    Note that combining Proposition~\ref{prop:p-alpha-bound} and \eqref{ineq:count-bound}, the analysis in Section~\ref{sec:b1-bound} shows that for $\alpha \in I_t$ with $c^{\del}(\alpha) < 2\rho(\alpha)$, we have
    \begin{align*}
        \sum_{\substack{0 \le h \le t,\\ h \le f < 2h}} \sum_{\substack{\alpha \in I_t:\\ f = c^{\del}(\alpha),\\
        h = \rho(\alpha)}} p_{\alpha} &\le \sum_{\substack{0 \le h \le t,\\ h \le f < 2h}} (2d^2 n)^{t + 2f-2h} (2h)^{4h-2f}\left(\frac{K_1}{n}\right)^{t+f}\\
        &\le K_2^t\sum_{\substack{0 \le h \le t,\\ h \le f < 2h}} \left(\frac{h^2}{n}\right)^{2h-f}\\
        &\le \frac{2D^2K_2^D}{n}, \numberthis \label{ineq:small-f-bound}
    \end{align*}
    which will turn up to be negligible for $D$ at most a small constant times $\log(n)$. We will then focus on the situation of $c^{\del}(\alpha) = 2\rho(\alpha)$.
    
    Now consider a fixed \[w = ((\{(v_1, \vec{f}_1), (v_2,\vec{g}_1)\}, \tau_1), (\{(v_2, \vec{f}_2), (v_3,\vec{g}_2)\}, \tau_2), \dots, (\{(v_t, \vec{f}_t), (v_1,\vec{g}_t)\},\tau_t)) \in \sW_t,\] and we will sum up the probabilities $p_{\alpha}$ for all $\alpha \in I_t$ such that $\cyc(\alpha) = w$, and $c^{\del}(\alpha) = 2\rho(\alpha)$. Note that whenever $\tau_i = \originalretained$, the two stubs $(v_i, \vec{f}_i)$ and $(v_{i+1}, \vec{g}_i)$ satisfy the relation $\vec{g}_i = \iota(\vec{f}_i) \in \vec{E}(H)$. We will denote $\vec{e}_{\ell}(w) \colonequals \vec{f}_{\ell}$. By Fact~\ref{fact:config-prob}, we have
    \begin{align*}
        &\sum_{\substack{\alpha \in I_t:\\ \cyc(\alpha) = w,\\ c^{\del}(\alpha) = 2\rho(\alpha) }} p_{\alpha}\\
        &= \sum_{\substack{\alpha \in I_t:\\ \cyc(\alpha) = w,\\c^{\del}(\alpha) = 2\rho(\alpha) }}\left(\prod_{e \in E_1(H)}\frac{(m-2c_{e}(\alpha)-1)!!}{(m-1)!!}\right) \left(\prod_{e \in E_2(H)} \frac{(m - c_{e}(\alpha))!}{m!}\right)\left( \frac{\binom{\frac{dn}{2} - c(\alpha)}{r - c^{\del}(\alpha)}}{\binom{\frac{dn}{2}}{r}} \frac{(2r-2\rho(\alpha)-1)!!}{(2r-1)!!} \right)
        \intertext{Since $m = \frac{n}{k} = \Omega(n)$ and $c_{e}(\alpha) \le |\sO(\alpha)| \le 2D = o(n^{1/2})$, we get}
        &= \sum_{\substack{\alpha \in I_t:\\ \cyc(\alpha) = w,\\c^{\del}(\alpha) = 2\rho(\alpha) }} \left(1 + O\left(\frac{D^2}{n}\right)\right)\left(\prod_{e \in E(H)}\left(\frac{1}{m}\right)^{c_{e}(\alpha)}\right)\left( \frac{\binom{\frac{dn}{2} - c(\alpha)}{r - c^{\del}(\alpha)}}{\binom{\frac{dn}{2}}{r}} \frac{(2r-2\rho(\alpha)-1)!!}{(2r-1)!!} \right)\\
        &= \left(1 + O\left(\frac{D^2}{n}\right)\right)\sum_{\substack{\alpha \in I_t:\\ \cyc(\alpha) = w,\\c^{\del}(\alpha) = 2\rho(\alpha) }} \left(\frac{1}{m}\right)^{c^{\ret}(\alpha)}\left(\frac{1}{m}\right)^{c^{\del}(\alpha)} \frac{\binom{\frac{dn}{2} - c(\alpha)}{r - c^{\del}(\alpha)}}{\binom{\frac{dn}{2}}{r}} \frac{(2r-2\rho(\alpha)-1)!!}{(2r-1)!!}, \numberthis \label{ineq:w-prob}
    \end{align*}
    where we use $c^{\del}_{e}(\alpha) \colonequals |\sO^{\del}(\alpha) \cap \sC_{e}|$ to denote the edges of $\alpha$ of type $\originaldeleted$ that belong to the color class $\sC_{e}$. Note that since $c^{\del}(\alpha) = 2\rho(\alpha)$, each of the $2\rho(\alpha)$ stubs of $\sR(\alpha)$ must be incident to a distinct edge in $\sO^{\del}(\alpha)$. Since $c^{\del}(\alpha) \le D = o(n^{1/2})$, we have
    \begin{align*}
        &\sum_{\substack{\alpha \in I_t:\\ \cyc(\alpha) = w,\\c^{\del}(\alpha) = 2\rho(\alpha) }} \left(\frac{1}{m}\right)^{c^{\del}(\alpha)}\\
        &= \left(\prod_{e \in E(H)}\left(\frac{1}{m}\right)^{c_{e}^{\del}(\alpha)}\right) (m - O(D))^{c^{\del}(\alpha)}\\
        &= 1 + O\left(\frac{D^2}{n}\right), \numberthis \label{ineq:del-contribution}
    \end{align*}
    where we used that each of $c^{\del}(\alpha)$ stubs of $\sR(\alpha)$ has $m - O(D)$ choices to form an edge in $\sO^{\del}(\alpha)$.

    On the other hand, for $c^{\del}(\alpha) = f$ and $\rho(\alpha) = h$ such that $f = 2h$, consider the factor
    \begin{align*}
        \frac{\binom{\frac{dn}{2} - c(\alpha)}{r - c^{\del}(\alpha)}}{\binom{\frac{dn}{2}}{r}} \frac{(2r-2\rho(\alpha)-1)!!}{(2r-1)!!} &= \frac{\frac{r!}{(r-2h)!} \frac{\left(\frac{dn}{2} - r\right)!}{\left(\frac{dn}{2} -r - (t-h)\right)!}}{\frac{\left(\frac{dn}{2}\right)!}{\left(\frac{dn}{2} - (t+h)\right)!}} \frac{(2r-2h-1)!!}{(2r-1)!!}.
    \end{align*}
    We will show that
    \begin{align}
        \left|\frac{\frac{r!}{(r-2h)!} \frac{\left(\frac{dn}{2} - r\right)!}{\left(\frac{dn}{2} -r - (t-h)\right)!}}{\frac{\left(\frac{dn}{2}\right)!}{\left(\frac{dn}{2} - (t+h)\right)!}} \frac{(2r-2h-1)!!}{(2r-1)!!} - \left(1 - \frac{2r}{dn}\right)^{t-h} \left(\frac{2r}{(dn)^2}\right)^h \right| \le O\left(\frac{t^2}{n^{h+1}}\right). \label{ineq:matching-prob-bound}
    \end{align}
    We prove \eqref{ineq:matching-prob-bound} by a case discussion.
    
    \begin{itemize}
        \item If $r \le 5t^2$, then
    \begin{align*}
        \frac{\frac{r!}{(r-2h)!} \frac{\left(\frac{dn}{2} - r\right)!}{\left(\frac{dn}{2} -r - (t-h)\right)!}}{\frac{\left(\frac{dn}{2}\right)!}{\left(\frac{dn}{2} - (t+h)\right)!}} \frac{(2r-2h-1)!!}{(2r-1)!!} &\le \frac{\frac{r!}{(r-2h)!} \frac{\left(\frac{dn}{2} - r\right)!}{\left(\frac{dn}{2} -r - (t-h)\right)!}}{\frac{\left(\frac{dn}{2}\right)!}{\left(\frac{dn}{2} - (t+h)\right)!}} \frac{(r-h)!}{r!}\\
        &\le \left(\frac{2r}{\left(\frac{dn}{2}\right)^2}\right)^h\\
        &\le O\left(\frac{t^2}{\left(\frac{dn}{2}\right)^{h+1}}\right),
    \end{align*}
    and
    \begin{align*}
        \left(1 - \frac{2r}{dn}\right)^{t-h} \left(\frac{2r}{(dn)^2}\right)^h \le O\left(\frac{t^2}{\left(\frac{dn}{2}\right)^{h+1}}\right).
    \end{align*}
    Thus, \eqref{ineq:matching-prob-bound} holds when $r \le 5t^2$.

    \item If $\frac{dn}{2} - r \le 5t^2$ and $t \ge h + 1$, then
    \begin{align*}
        \frac{\frac{r!}{(r-2h)!} \frac{\left(\frac{dn}{2} - r\right)!}{\left(\frac{dn}{2} -r - (t-h)\right)!}}{\frac{\left(\frac{dn}{2}\right)!}{\left(\frac{dn}{2} - (t+h)\right)!}} \frac{(2r-2h-1)!!}{(2r-1)!!} &\le \frac{\frac{r!}{(r-2h)!} \frac{\left(\frac{dn}{2} - r\right)!}{\left(\frac{dn}{2} -r - (t-h)\right)!}}{\frac{\left(\frac{dn}{2}\right)!}{\left(\frac{dn}{2} - (t+h)\right)!}} \frac{(r-h)!}{r!}\\
        &\le \frac{r^h (t^2)^{t-h}}{\left(\frac{dn}{2} - O(D)\right)^{t+h}}\\
        &\le O\left(\frac{t^2}{\left(\frac{dn}{2}\right)^{h+1}}\right),
    \end{align*}
    and
    \begin{align*}
        \left(1 - \frac{2r}{dn}\right)^{t-h} \left(\frac{2r}{(dn)^2}\right)^h \le O\left(\frac{t^2}{\left(\frac{dn}{2}\right)^{h+1}}\right).
    \end{align*}
    Thus, \eqref{ineq:matching-prob-bound} holds when $\frac{dn}{2} -r\le 5t^2$ and $t \ge h+1$.

    \item Finally, if $r > 5t^2$, and $\frac{dn}{2} - r > 5t^2$ or $t = h$, we have
    \begin{align*}
        \frac{r!}{(r-2h)!} &= r^{2h}\left(1 + O\left(\frac{h^2}{r}\right)\right),\\
        \frac{\left(\frac{dn}{2} - r\right)!}{\left(\frac{dn}{2} -r - (t-h)\right)!} &= \left(\frac{dn}{2} - r\right)^{t-h} \left(1 + O\left(\frac{(t-h)^2}{\frac{dn}{2} - r}\right)\right),\\
        \frac{\left(\frac{dn}{2}\right)!}{\left(\frac{dn}{2} - (t+h)\right)!} &= \left(\frac{dn}{2}\right)^{t+h} \left(1 + O\left(\frac{(t+h)^2}{\frac{dn}{2}}\right)\right),\\
        \frac{(2r-2h-1)!!}{(2r-1)!!} &= \left(\frac{1}{2r}\right)^{h}\left(1 + O\left(\frac{h^2}{r}\right)\right).
    \end{align*}

    Then,
    \begin{align*}
        &\frac{\frac{r!}{(r-2h)!} \frac{\left(\frac{dn}{2} - r\right)!}{\left(\frac{dn}{2} -r - (t-h)\right)!}}{\frac{\left(\frac{dn}{2}\right)!}{\left(\frac{dn}{2} - (t+h)\right)!}} \frac{(2r-2h-1)!!}{(2r-1)!!}\\
        &= \left(1 - \frac{2r}{dn}\right)^{t-h}\frac{r^{2h}}{\left(\frac{dn}{2}\right)^{2h}(2r)^h} \left(1 + O\left(\frac{h^2}{r} + \frac{(t-h)^2}{\frac{dn}{2}-r} + \frac{(t+h)^2}{\frac{dn}{2}} + \frac{h^2}{r}\right)\right)\\
        &= \left(1 - \frac{2r}{dn}\right)^{t-h}\left(\frac{2r}{(dn)^2}\right)^h + O\left(h^2\left(\frac{2r}{(dn)^2}\right)^{h-1} \frac{2}{(dn)^2}\right)\\
        &\quad + O\left((t-h)^2 \left(1 - \frac{2r}{dn}\right)^{t-h-1}\left(\frac{2r}{(dn)^2}\right)^h \frac{2}{dn}\right)+ O\left((t+h)^2 \left(\frac{2r}{(dn)^2}\right)^h \frac{2}{dn}\right)\\
        &= \left(1 - \frac{2r}{dn}\right)^{t-h}\left(\frac{2r}{(dn)^2}\right)^h + O\left(\frac{t^2}{n^{h+1}}\right).
    \end{align*}
    Thus, \eqref{ineq:matching-prob-bound} holds when $r > 5t^2$, and $\frac{dn}{2} - r > 5t^2$ or $t = h$.

    \end{itemize}
    We have now established \eqref{ineq:matching-prob-bound} in all cases. For the fixed 
    \[w = ((\{(v_1, \vec{f}_1), (v_2,\vec{g}_1)\}, \tau_1), (\{(v_2, \vec{f}_2), (v_3,\vec{g}_2)\}, \tau_2), \dots, (\{(v_t, \vec{f}_t), (v_1,\vec{g}_t)\},\tau_t)) \in \sW_t,\]
    let $h = \rho(w)$ be the number of edges of type $\rematched$ among the $t$ edges of $w$, and $c^{\ret}(w)$ be the number of edges ot type $\originalretained$ among the $t$ edges of $w$. Note that $c^{\ret}(w) = t - \rho(w) = t - h$. Then, any $\alpha\in I_t$ such that $\cyc(\alpha) = w$ satisfies $\rho(\alpha) = h$, and $c^{\ret}(\alpha) = t-h$. Plugging \eqref{ineq:del-contribution} and \eqref{ineq:matching-prob-bound} into \eqref{ineq:w-prob}, we get
    \begin{align*}
        &\sum_{\substack{\alpha \in I_t:\\ \cyc(\alpha) = w,\\ c^{\del}(\alpha) = 2\rho(\alpha) }} p_{\alpha}\\
        &= \sum_{\substack{\alpha \in I_t:\\ \cyc(\alpha) = w,\\c^{\del}(\alpha) = 2\rho(\alpha) }} \left(\frac{1}{m}\right)^{t-h}\left(\frac{1}{m}\right)^{c^{\del}(\alpha)} \frac{\binom{\frac{dn}{2} - c(\alpha)}{r - c^{\del}(\alpha)}}{\binom{\frac{dn}{2}}{r}} \frac{(2r-2\rho(\alpha)-1)!!}{(2r-1)!!}\\
        &= \left(1 + O\left(\frac{D^2}{n}\right)\right)\left(\frac{1}{m}\right)^{t-h} \sum_{\substack{\alpha \in I_t:\\ \cyc(\alpha) = w,\\c^{\del}(\alpha) = 2h }}\left(\frac{1}{m}\right)^{c^{\del}(\alpha)}  \left(\left(1 - \frac{2r}{dn}\right)^{t-h}\left(\frac{2r}{(dn)^2}\right)^h + O\left(\frac{t^2}{n^{h+1}}\right)\right)
        \intertext{using \eqref{ineq:matching-prob-bound},}
        &= \left(1 + O\left(\frac{D^2}{n}\right)\right) \left(\frac{1}{m}\right)^{t-h}\left(\left(1 - \frac{2r}{dn}\right)^{t-h}\left(\frac{2r}{(dn)^2}\right)^h + O\left(\frac{t^2}{n^{h+1}}\right)\right), \numberthis \label{ineq:w-contribution}
    \end{align*}
    where the last step uses \eqref{ineq:del-contribution}.

    Now it remains to sum over $w \in \sW_t$ to obtain $\mu_t$.
    \begin{enumerate}
        \item First, we consider collection of 
        \[w = ((\{(v_1, \vec{f}_1), (v_2,\vec{g}_1)\}, \tau_1), (\{(v_2, \vec{f}_2), (v_3,\vec{g}_2)\}, \tau_2), \dots, (\{(v_t, \vec{f}_t), (v_1,\vec{g}_t)\},\tau_t)) \in \sW_t,\] where all the types $\tau_1 = \dots = \tau_t = \originalretained$. In this case, $h = \rho(w) = 0$, and
        \[\vec{g}_{\ell} = \iota(\vec{f}_{\ell}), \text{ for all } 1 \le \ell \le t.\]
        Applying \eqref{ineq:w-contribution}, we get
        \begin{align*}
            &\sum_{\substack{w\in \sW_t:\\ \tau_{\ell} = \originalretained,\\ \forall 1\le \ell \le t} } \sum_{\substack{\alpha \in I_t:\\ \cyc(\alpha) = w,\\ c^{\del}(\alpha) = 2\rho(\alpha) }} p_{\alpha}\\
            &= \sum_{\substack{w\in \sW_t:\\ \tau_{\ell} = \originalretained, \\ \forall 1\le \ell \le t} } \left(\frac{1}{m}\right)^t\left(1 + O\left(\frac{D^2}{n}\right)\right)\left(1 - \frac{2r}{dn}\right)^{t}\\
            &=  \sum_{(\vec{e}_{\ell})_{\ell=1}^t} \sum_{\substack{w\in \sW_t:\\ \tau_{\ell} = \originalretained,\\ \vec{e}_{\ell}(w) = \vec{e}_{\ell},\\ \forall 1\le \ell \le t } }\left(\frac{1}{m}\right)^t \left(1 + O\left(\frac{D^2}{n}\right)\right)\left(1 - \frac{2r}{dn}\right)^{t}\\
            &=  \frac{1}{2t}\sum_{(\vec{e}_{\ell})_{\ell=1}^t} \left(\prod_{\substack{1\le \ell \le t} } \left(m - O(D)\right)\left(\One\{\tail(\vec{e}_{\ell+1}) = \head(\vec{e}_{\ell}), \vec{e}_{\ell+1} \ne \iota(\vec{e}_{\ell}) \}\right)\right)\\
            &\quad \cdot \left(\frac{1}{m}\right)^t \left(1 + O\left(\frac{D^2}{n}\right)\right)\left(1 - \frac{2r}{dn}\right)^{t}
            \intertext{where we enumerate the number of $w \in \sW_t$ with the required stub colors $(\vec{e}_{\ell})_{\ell=1}^t$ by first enumerating the stubs $(v_1, \vec{f}_1), (v_2, \vec{f}_2), \dots, (v_t, \vec{f}_t)$, giving $m - O(D)$ choices for each stub, and then enumerating the stubs $(v_2, \vec{g}_1), (v_3, \vec{g}_2), \dots, (v_1, \vec{g}_t)$, which gives the factor \[\prod_{\ell = 1}^t \One\{\tail(\vec{f}_{\ell+1}) = \tail(\vec{g}_{\ell}), \vec{f}_{\ell+1} \ne \vec{g}_{\ell} \}  = \prod_{\ell = 1}^t \One\{\tail(\vec{f}_{\ell+1}) = \head(\vec{f}_{\ell}), \vec{f}_{\ell+1} \ne \iota(\vec{f}_{\ell}) \}\] as the stub $(v_{\ell+1}, \vec{g}_{\ell})$ needs to satisfy both consistency constraint $\One\{\tail(\vec{f}_{\ell+1}) = \tail(\vec{g}_{\ell})\}$ and non-backtracking constraint $\vec{f}_{\ell+1} \ne \vec{g}_{\ell}$. Now recall that in Definition~\ref{def:nb-matrix}, the non-backtracking matrix is defined to have entries $B_{\vec{e}_1,\vec{e}_2} = \One\{\tail(\vec{e}_2) = \head(\vec{e}_1), \vec{e}_2 \ne \iota(\vec{e}_1)\}$. Thus, we have}
            &= \frac{1}{2t}\sum_{(\vec{e}_{\ell})_{\ell=1}^t} \left(\prod_{\substack{1\le \ell \le t} } B_{\vec{e}_{\ell},\vec{e}_{\ell+1}}\right) \cdot  \left(1 + O\left(\frac{D^2}{n}\right)\right)\left(1 - \frac{2r}{dn}\right)^{t}\\
            &= \left(1 - \frac{2r}{dn}\right)^{t}\frac{\tr(B^t)}{2t} + O\left(\frac{D^2}{n}\right) \frac{\tr(B^t)}{2t}, \numberthis \label{eq:ret-mean}
        \end{align*}
        where the factor $\frac{1}{2t}$ comes from the fact that the trace overcounts a length-$t$ $w \in \sW_t$ by $2t$ times, by choosing each of the $t$ edges as the starting edge, and choosing $1$ of the $2$ orientations of $w$. Note that the same overcounting factor holds even when $t = 1$ or $2$.
        \item Next, we consider the collection of \[w = ((\{(v_1, \vec{f}_1), (v_2,\vec{g}_1)\}, \tau_1), (\{(v_2, \vec{f}_2), (v_3,\vec{g}_2)\}, \tau_2), \dots, (\{(v_t, \vec{f}_t), (v_1,\vec{g}_t)\},\tau_t)) \in \sW_t\] with at least one type $\tau_i = \rematched$. In this case, $h = \rho(w) \ge 1$. Applying \eqref{ineq:w-contribution}, we get
        \begin{align*}
            &\sum_{\substack{w\in \sW_t:\\ h = \rho(w) \ge 1 } } \sum_{\substack{\alpha \in I_t:\\ \cyc(\alpha) = w,\\ c^{\del}(\alpha) = 2\rho(\alpha) }} p_{\alpha}\\
            &= \sum_{\substack{w\in \sW_t:\\ h = \rho(w) \ge 1 } } \left(1 + O\left(\frac{D^2}{n}\right)\right) \left(\frac{1}{m}\right)^{t-h}\left(\left(1 - \frac{2r}{dn}\right)^{t-h}\left(\frac{2r}{(dn)^2}\right)^h + O\left(\frac{t^2}{n^{h+1}}\right)\right)\\
            &=  \sum_{\substack{ (\tau_{\ell})_{\ell=1}^t:\\ h \colonequals \#\{\ell: \tau_{\ell} = \rematched\},\\ h \ge 1 } } \sum_{(\vec{e}_{\ell})_{\ell=1}^t  } \left(\frac{1}{m}\right)^{t-h}  \sum_{\substack{w\in \sW_t:\\ \tau_{\ell}(w) = \tau_{\ell}, \forall\,\ell \in [t],\\ \vec{e}_{\ell}(w) = \vec{e}_{\ell}, \forall\,\ell \in [t] } } \left(1 + O\left(\frac{D^2}{n}\right)\right)\\
            &\quad \cdot \left(\left(1 - \frac{2r}{dn}\right)^{t-h}\left(\frac{2r}{(dn)^2}\right)^h + O\left(\frac{t^2}{n^{h+1}}\right)\right)\\
            &= \frac{1}{2t}\sum_{\substack{ (\tau_{\ell})_{\ell=1}^t:\\ h \colonequals \#\{\ell: \tau_{\ell} = \rematched\},\\ h \ge 1 } } \left(1 + O\left(\frac{D^2}{n}\right)\right)\left(\frac{1}{m}\right)^{t-h}  \sum_{(\vec{e}_{\ell})_{\ell=1}^t } (m-O(D))^t \\
            &\quad \cdot \left(\prod_{\substack{1\le \ell \le t:\\ \tau_{\ell} = \originalretained } }  B_{\vec{e}_{\ell}, \vec{e}_{\ell+1}}\right)\left(\prod_{\substack{1\le \ell \le t:\\ \tau_{\ell} = \rematched } }  (d-1)\right)   \left(\left(1 - \frac{2r}{dn}\right)^{t-h}\left(\frac{2r}{(dn)^2}\right)^h + O\left(\frac{t^2}{n^{h+1}}\right)\right)
            \intertext{where we enumerate the number of $w \in \sW_t$ with the required stub colors $(\vec{e}_{\ell})_{\ell=1}^t$, by first enumerating the stubs $(v_1, \vec{f}_1), (v_2, \vec{f}_2), \dots, (v_t, \vec{f}_t)$, giving $m - O(D)$ choices for each stub, and then enumerating the stubs $(v_2, \vec{g}_1), (v_3, \vec{g}_2), \dots, (v_1, \vec{g}_t)$, which in the case when $\tau_{\ell} = \originalretained$ gives a factor of $B_{\vec{e}_{\ell}, \vec{e}_{\ell+1}}$ due to the non-backtracking constraint and that $\vec{g}_{\ell} = \iota(\vec{f}_{\ell})$, and in the case when $\tau_{\ell} = \rematched$ gives a factor of $d-1$ since the only constraint is $\vec{g}_{\ell} \ne \vec{f}_{\ell+1}$. Thus, }
            &= \frac{1}{2t}\sum_{\substack{ (\tau_{\ell})_{\ell=1}^t:\\ h \colonequals \#\{\ell: \tau_{\ell} = \rematched\},\\ h \ge 1 } }  \left(1 + O\left(\frac{D^2}{n}\right)\right)(m(d-1))^{h} \sum_{(\vec{e}_{\ell})_{\ell=1}^t } \left(\prod_{\substack{1\le \ell \le t:\\ \tau_{\ell} = \originalretained } }  B_{\vec{e}_{\ell}, \vec{e}_{\ell+1}}\right)\\
            &\quad \cdot\left(\left(1 - \frac{2r}{dn}\right)^{t-h}\left(\frac{2r}{(dn)^2}\right)^h + O\left(\frac{t^2}{n^{h+1}}\right)\right)\\
            &= \frac{1}{2t}\sum_{\substack{ (\tau_{\ell})_{\ell=1}^t:\\ h \colonequals \#\{\ell: \tau_{\ell} = \rematched\},\\ h \ge 1 } }   \left(1 + O\left(\frac{D^2}{n}\right)\right)(m(d-1))^{h} (kd)^{h}(d-1)^{t-h} \\
            &\quad \cdot \left(\left(1 - \frac{2r}{dn}\right)^{t-h}\left(\frac{2r}{(dn)^2}\right)^h + O\left(\frac{t^2}{n^{h+1}}\right)\right)
            \intertext{where we first enumerate the choices of $\vec{e}_{\ell}$ for $\tau_{\ell} = \rematched$, each of which gives $kd$ choices as $|\vec{E}(H)| = kd$, and then enumerate the choices of $\vec{e}_{\ell}$ for $\tau_{\ell} = \originalretained$, with each contiguous segment corresponding to counting an open non-backtracking walk on $H$, giving a total factor of $(d-1)^{t-h}$. Therefore, using $m = \frac{n}{k}$, we get}
            &= \frac{1}{2t}\sum_{\substack{ (\tau_{\ell})_{\ell=1}^t:\\ h \colonequals \#\{\ell: \tau_{\ell} = \rematched\},\\ h \ge 1 }} \left(1 + O\left(\frac{D^2}{n}\right)\right)(d-1)^t (nd)^h \left(\left(1 - \frac{2r}{dn}\right)^{t-h}\left(\frac{2r}{(dn)^2}\right)^h + O\left(\frac{t^2}{n^{h+1}}\right)\right)\\
            &= \left(1 + O\left(\frac{D^2}{n}\right)\right)\cdot \frac{(d-1)^t}{2t} \sum_{\substack{ (\tau_{\ell})_{\ell=1}^t:\\ h \colonequals \#\{\ell: \tau_{\ell} = \rematched\},\\ h \ge 1 } } \left(\left(1 - \frac{2r}{dn}\right)^{t-h}\left(\frac{2r}{dn}\right)^h  + O\left(\frac{t^2(d-1)^t}{n}\right)\right)\\
            &= \left(1 + O\left(\frac{D^2}{n}\right)\right)\cdot \frac{(d-1)^t}{2t} \left(1 - \left(1 - \frac{2r}{dn}\right)^t\right) + O\left(\frac{t^2(2(d-1))^t}{n}\right)\\
            &= \frac{(d-1)^t}{2t} \left(1 - \left(1 - \frac{2r}{dn}\right)^t\right) + O\left(\frac{D^2(2(d-1))^t}{n}\right), \numberthis \label{eq:noise-mean}
        \end{align*}
        where $\frac{1}{2t}$ again accounts for the overcounting factor.
    \end{enumerate}

    Combining \eqref{eq:ret-mean} and \eqref{eq:noise-mean} and using \eqref{ineq:small-f-bound}, we get
    \begin{align*}
        \mu_t &= \sum_{\alpha \in I_t} p_{\alpha}\\
        &= \sum_{\substack{0 \le h \le t,\\ f = 2h}} \sum_{\substack{\alpha \in I_t:\\ f = c^{\del}(\alpha),\\
        h = \rho(\alpha)}} p_{\alpha} + \sum_{\substack{0 \le h \le t,\\ h \le f < 2h}} \sum_{\substack{\alpha \in I_t:\\ f = c^{\del}(\alpha),\\
        h = \rho(\alpha)}} p_{\alpha}\\
        &= \sum_{\substack{w\in \sW_t:\\ \tau_{\ell} = \originalretained,\\ \forall\, 1\le \ell \le t} } \sum_{\substack{\alpha \in I_t:\\ \cyc(\alpha) = w,\\ c^{\del}(\alpha) = 2\rho(\alpha) }} p_{\alpha} + \sum_{\substack{w\in \sW_t:\\ h = \rho(w) \ge 1 } } \sum_{\substack{\alpha \in I_t:\\ \cyc(\alpha) = w,\\ c^{\del}(\alpha) = 2\rho(\alpha) }} p_{\alpha}  + O\left(\frac{2D^2K_2^D}{n}\right)\\
        &= \left(1 - \frac{2r}{dn}\right)^{t}\frac{\tr(B^t)}{2t}  + \frac{(d-1)^t}{2t} \left(1 - \left(1 - \frac{2r}{dn}\right)^t\right)  \\
        &\quad + O\left(\frac{D^2}{n}\right) \frac{\tr(B^t)}{2t} + O\left(\frac{D^2(2(d-1))^t}{n}\right) + O\left(\frac{D^2K_2^D}{n}\right)\\
        &= \left(1 - \frac{2r}{dn}\right)^{t}\left(\frac{\tr(B^t)}{2t} - \frac{(d-1)^t}{2t}\right)  + \frac{(d-1)^t}{2t} + O\left(\frac{D^2K_4^D}{n}\right), \numberthis\label{eq:mu-mean}
    \end{align*}
    for a constant $K_4 = K_4(H) > 0$. This established the mean $\mu_t$ of the number of length-$t$ cycles.

    \subsection{Concluding the Proof}

    Taking $c_4 = c_4(H) > 0$ to be the minimum among $c_1, c_2$, and $c_3$ established in the previous sections, we get that for all $D \le c_4\log(n)$,
    \begin{align*}
        d_{\TV}\left(\sL(Z_1, Z_2, Z_3, \dots, Z_D),  \bigotimes_{t=1}^D \Pois(\mu_t)\right) \le 2(2b_1 + 2b_2 + b_3) = o(1), \quad \text{ as } n \to \infty,
    \end{align*}
    by Theorem~\ref{thm:chen-stein}, where the means are given in \eqref{eq:mu-mean} by
    \begin{align*}
        \mu_t \colonequals \left(1 - \frac{2r}{dn}\right)^{t}\left(\frac{\tr(B^t)}{2t} - \frac{(d-1)^t}{2t}\right)  + \frac{(d-1)^t}{2t} + O\left(\frac{D^2K_4^D}{n}\right), \quad \text{ for } 1 \le t \le D.
    \end{align*}
    Denoting $\delta \colonequals \frac{2r}{nd}$ and $\nu_t \colonequals \frac{\tr(B^t)}{2t}$, by triangle inequality, for $D \le c_4 \log(n)$ we get
    \begin{align*}
        &d_{\TV}\left(\sL(Z_1, Z_2, \dots, Z_D), \bigotimes_{t=1}^D \Pois\left((1-\delta)^t\left(\nu_t - \frac{(d-1)^t}{2t}\right) + \frac{(d-1)^t}{2t}\right)\right)\\
        &\le d_{\TV}\left(\sL(Z_1, Z_2, \dots, Z_D), \bigotimes_{t=1}^D \Pois\left(\mu_t\right)\right)\\
        &\quad + d_{\TV}\left(\bigotimes_{t=1}^D \Pois\left(\mu_t\right), \bigotimes_{t=1}^D \Pois\left((1-\delta)^t\left(\nu_t - \frac{(d-1)^t}{2t}\right) + \frac{(d-1)^t}{2t}\right)\right)\\
        &= o(1) + d_{\TV}\left(\bigotimes_{t=1}^D \Pois\left(\mu_t\right), \bigotimes_{t=1}^D \Pois\left((1-\delta)^t\left(\nu_t - \frac{(d-1)^t}{2t}\right) + \frac{(d-1)^t}{2t}\right)\right). \numberthis\label{ineq:Pois-TV-bound}
    \end{align*}

    Using the standard fact about KL divergence between two Poissons \[d_{\KL}(\Pois(a)\|\Pois(b)) = a\log\left(\frac{a}{b}\right) + b - a,\] and the tensorization of KL divergence for product distributions, we have
    \begin{align*}
        &d_{\KL}\left(\bigotimes_{t=1}^D \Pois\left(\mu_t\right)\bigg\| \bigotimes_{t=1}^D \Pois\left((1-\delta)^t\left(\nu_t - \frac{(d-1)^t}{2t}\right) + \frac{(d-1)^t}{2t}\right)\right)\\
        &= \sum_{t=1}^D d_{\KL}\left(\Pois(\mu_t) \bigg\| \Pois\left((1-\delta)^t\left(\nu_t - \frac{(d-1)^t}{2t}\right) + \frac{(d-1)^t}{2t}\right)\right)\\
        &= \sum_{t=1}^D \mu_t \log\left(\frac{\mu_t}{(1-\delta)^t\left(\nu_t - \frac{(d-1)^t}{2t}\right) + \frac{(d-1)^t}{2t}}\right) + (1-\delta)^t\left(\nu_t - \frac{(d-1)^t}{2t}\right) + \frac{(d-1)^t}{2t} -\mu_t\\
        &\le \mu_t\left(\frac{\mu_t - \left((1-\delta)^t\left(\nu_t - \frac{(d-1)^t}{2t}\right) + \frac{(d-1)^t}{2t}\right)}{(1-\delta)^t\left(\nu_t - \frac{(d-1)^t}{2t}\right) + \frac{(d-1)^t}{2t}}\right) + (1-\delta)^t\left(\nu_t - \frac{(d-1)^t}{2t}\right) + \frac{(d-1)^t}{2t} - \mu_t 
        \intertext{Now we use $\log(1+r) \le r$ to get}
        &= \sum_{t=1}^D \frac{\left(\mu_t - \left((1-\delta)^t\left(\nu_t - \frac{(d-1)^t}{2t}\right) + \frac{(d-1)^t}{2t}\right)\right)^2}{(1-\delta)^t\left(\nu_t - \frac{(d-1)^t}{2t}\right) + \frac{(d-1)^t}{2t}}
        \intertext{Plugging in $\mu_t = (1-\delta)^t\left(\nu_t - \frac{(d-1)^t}{2t}\right) + \frac{(d-1)^t}{2t} + O\left(\frac{D^2K_4^D}{n}\right)$, we get}
        &\le O\left(\frac{D^4 K_4^{2D}}{n^2}\right)\sum_{t=1}^D \frac{1}{(1-\delta)^t\left(\nu_t - \frac{(d-1)^t}{2t}\right) + \frac{(d-1)^t}{2t}}\\
        &\le O\left(\frac{D^4 K_4^{2D}}{n^2}\right)\sum_{t=1}^D \frac{2t}{(d-1)^t},
    \end{align*}
    which is $o(1)$ provided that $D \le c_5\log(n)$ for a constant $c_5 = c_5(H) > 0$. By Pinsker's inequality, for $D \le c_5\log(n)$,
    \begin{align*}
        &d_{\TV}\left(\bigotimes_{t=1}^D \Pois(\mu_t), \bigotimes_{t=1}^D \Pois\left((1-\delta)^t\left(\nu_t - \frac{(d-1)^t}{2t}\right) + \frac{(d-1)^t}{2t}\right)\right)\\
        &\le \sqrt{\frac{1}{2}d_{\KL}\left(\bigotimes_{t=1}^D \Pois(\mu_t)\bigg\| \bigotimes_{t=1}^D \Pois\left((1-\delta)^t\left(\nu_t - \frac{(d-1)^t}{2t}\right) + \frac{(d-1)^t}{2t}\right)\right)}\\
        &\le o(1). \numberthis \label{ineq:TV-Pois-mean-shift}
    \end{align*}

    Taking $c = c(H) > 0$ to be the minimum among $c_4$ and $c_5$ and plugging \eqref{ineq:TV-Pois-mean-shift} back to \eqref{ineq:Pois-TV-bound}, we get that for all $D \le c\log(n)$,
    \begin{align*}
        &d_{\TV}\left(\sL(Z_1, Z_2, \dots, Z_D), \bigotimes_{t=1}^D \Pois\left((1-\delta)^t\left(\nu_t - \frac{(d-1)^t}{2t}\right) + \frac{(d-1)^t}{2t}\right)\right)\\
        &\le o(1) + d_{\TV}\left(\bigotimes_{t=1}^D \Pois\left(\mu_t\right), \bigotimes_{t=1}^D \Pois\left((1-\delta)^t\left(\nu_t - \frac{(d-1)^t}{2t}\right) + \frac{(d-1)^t}{2t}\right)\right)\\
        &\le o(1).
    \end{align*}
    This finishes the proof of Theorem~\ref{thm:cycle-counts} in the non-bipartite case.

    The bipartite case follows the same proof essentially, with minor adjustments in the mean computation.

\section{Proof of Detection for Non-Ramanujan Noisy Lift}\label{sec:non-ramanujan}

In this section, we prove Theorem~\ref{thm:non-ramanujan}, which amounts to understanding $\tr(B^t)$ when $H$ is non-Ramanujan in light of Theorem~\ref{thm:cycle-counts}.

\begin{proof}
    First, we consider the non-bipartite case. Let $M$ be the adjacency matrix of $H$, and $B$ be the non-backtracking matrix of $H$. The Ihara-Bass formula in the presence of half-loops \cite[equation (1.1)]{friedman2014relativized} gives
    \begin{align*}
        \det(\lambda I - B) &=  (\lambda - 1)^{|E_1(H)|} (\lambda^2 - 1)^{|E_2(H)| - |V(H)|} \cdot \det(\lambda^2 I - \lambda M + (d-1)I),
    \end{align*}
    Thus, for any eigenvalue $\rho$ of $M$, the solutions $\lambda$ to
    \begin{align*}
        \lambda^2 - \lambda \rho + d - 1 = 0
    \end{align*}
    are eigenvalues of the non-backtracking matrix $B$.

    Since $H$ is non-Ramanujan, $H$ has an eigenvalue $\rho \ne d$ such that $|\rho| > 2\sqrt{d-1}$. Then, one corresponding solution $\lambda$ to 
    \begin{align*}
        \lambda^2 - \lambda \rho + d-1= 0
    \end{align*}
    then satisfies
    \begin{align*}
        |\lambda| = \max\left\{\frac{\left|\rho \pm \sqrt{\rho^2 - 4(d-1)}\right|}{2}\right\} > \sqrt{d-1}.
    \end{align*}
    Also note that $d-1$ is an eigenvalue of $B$. Consequently, $B$ has one eigenvalue $d-1$ and another eigenvalue $\lambda$ such that $|\lambda| > \sqrt{d-1}$, and we get that for any even $t \in \NN$,
    \begin{align*}
        \tr(B^t) \ge (d-1)^t + \lambda^{t}. 
    \end{align*}

    By Theorem~\ref{thm:cycle-counts}, there exists $c = c(H) > 0$ such that for any $D \le c\log(n)$, under $\sP^S$,
    \begin{align}
        d_{\TV}\left(\sL_{\sP^S}(Z_D), \Pois\left(b_D\right)\right) = o(1), \label{ineq:P-TV-bound}
    \end{align}
    while under $\sQ^S$,
    \begin{align}
        d_{\TV}\left(\sL_{\sQ^S}(Z_D), \Pois\left(a_D\right)\right) = o(1), \label{ineq:Q-TV-bound}
    \end{align}
    where $a_D \colonequals \frac{(d-1)^D}{2D}$ and $b_D \colonequals (1-\delta)^D \left(\frac{\tr(B^D)}{2D} - \frac{(d-1)^D}{2D}\right) + \frac{(d-1)^D}{2D}$. By Chebyshev's inequality,
    \begin{align*}
        &\Pr\left(\Pois(b_D) > \frac{a_D + b_D}{2}\right) + \Pr\left(\Pois(a_D) < \frac{a_D + b_D}{2}\right)\\
        &\le \frac{b_D}{\left(\frac{b_D - a_D}{2}\right)^2} + \frac{a_D}{\left(\frac{b_D - a_D}{2}\right)^2}\\
        &= \frac{4(b_D + a_D)}{(b_D - a_D)^2}\\
        &= \frac{4\left(\frac{(d-1)^D}{D} + (1-\delta)^D \left(\frac{\tr(B^D)}{2D} - \frac{(d-1)^D}{2D}\right)\right)}{(1-\delta)^{2D} \left(\frac{\tr(B^D)}{2D} - \frac{(d-1)^D}{2D}\right)^2}\\
        &\le \frac{4\left(\frac{(d-1)^D}{D} + (1-\delta)^D \frac{\lambda^D}{2D}\right)}{(1-\delta)^{2D} \frac{\lambda^{2D}}{4D^2}}\\
        &= 16D\left(\frac{d-1}{(1-\delta)^2\lambda^2}\right)^D + 8D\left(\frac{1}{(1-\delta) \lambda}\right)^D,
    \end{align*}
    for any even $D \in \NN$. Thus, by choosing $\delta = \delta(H) > 0$ small enough such that $(1-\delta)^2\lambda^2 > d-1$, we see that for any even $D = \omega(1)$,  
    \begin{align*}
        \Pr\left(\Pois(b_D) > \frac{a_D + b_D}{2}\right) + \Pr\left(\Pois(a_D) < \frac{a_D + b_D}{2}\right) = o(1),
    \end{align*}
    and thus
    \begin{align*}
        d_{\TV}\left(\Pois\left(b_D\right), \Pois\left(a_D\right)\right) = 1 - o(1). \numberthis\label{ineq:TV-Pois-D}
    \end{align*}

    Combining \eqref{ineq:TV-P}, \eqref{ineq:TV-Q}, and \eqref{ineq:TV-Pois-D}, we conclude that there exists $\Delta = \Delta(H) > 0$ such that for any even $D = \omega(1)$, the PTF that thresholds the number of length-$D$ cycle at an appropriate value achieves strong detection between $\sP^S$ and $\sQ^S$ for any $\delta \le \Delta$.

    The proof is essentially identical for the bipartite case, which we omit here. 
\end{proof}

\begin{remark}\label{rem:necessity-of-nonconstant-deg}
    It is easy to see that non-constant degree is necessary for symmetric PTF to achieve strong detection, as otherwise the products of Poisson distributions that the cycle counts converge to under both $\sP$ and $\sQ$ are contiguous.
\end{remark}

\section{Proof of Statistical Distinguishability}\label{sec:statistical}

Finally, we prove Theorem~\ref{thm:statistical} via a counting argument.

\begin{proof}
    We first consider the non-bipartite case, and work with a base $d$-regular graph $H$ that is not a single vertex with $d$ half-loops.

    Under $\sQ$, the number of potential stub-pair realization is $(dn - 1)!!$. Moreover, by Corollary~\ref{cor:simple-prob}, $\Theta((dn-1)!!)$ number of the stub-pair realizations correspond to simple graphs. Since each simple graph can be realized by $(d!)^n$ stub-pair realization (by permuting the $d$ stubs at each vertex), the total number of potential simple graphs under $\sQ^S$ is then
    \begin{align*}
        \Theta\left(\frac{(dn-1)!!}{(d!)^n}\right) = \Theta\left(\frac{\left(\frac{dn}{e}\right)^{\frac{dn}{2}}}{(d!)^n}\right). \numberthis \label{eq:num-Q}
    \end{align*}

    Under $\sP = \sP(H, \delta)$ and $r = \lceil \delta\frac{dn}{2} \rceil$, the number of potential stub-pair realizations of noisy random lift can be bounded by
    \begin{align*}
        &\binom{n}{\frac{n}{k}, \dots, \frac{n}{k}} \prod_{i=1}^k \binom{d}{M_{i,1}, M_{i,2}, \dots, M_{i,k}}^{n/k} \cdot \binom{dn}{2r}(2r-1)!!\\
        &\cdot \prod_{i=1}^k \left(\frac{n}{k}M_{i,i}-1\right)!! \prod_{1\le i < j \le k}\left(\frac{n}{k}M_{i,j}\right)! \left(\frac{2k}{n}\right)^r,
    \end{align*}
    where $\binom{n}{\frac{n}{k}, \dots, \frac{n}{k}}$ is the number of ways to assign the fiber partitions, $\binom{d}{M_{i,1}, M_{i,2}, \dots, M_{i,k}}$ is the number of ways to assign the stub types for a vertex in fiber $V_i$, $\binom{dn}{2r}$ is an upper bound on the number of ways to choose $2r$ stubs incident to $r$ deleted stub-pairs, $(2r-1)!!$ is the number of ways to form edges during the rematching process, and finally the last factor 
    \begin{align*}
        \prod_{i=1}^k \left(\frac{n}{k}M_{i,i}-1\right)!! \prod_{1\le i < j \le k}\left(\frac{n}{k}M_{i,j}\right)! \left(\frac{2k}{n}\right)^r
    \end{align*}
    is an upper bound on the number of possible realizations of the stub-pairs of the noiseless random lift that are not deleted among the $r$ chosen deleted stub-pairs. 
    
    To see this last factor is indeed an upper bound on the number of realization of the retained stub-pairs of the noiseless random lift, note that
    \begin{align*}
        \prod_{i=1}^k \left(\frac{n}{k}M_{i,i}-1\right)!! \prod_{1\le i < j \le k}\left(\frac{n}{k}M_{i,j}\right)!
    \end{align*}
    upper bounds the number of stub-pair realizations of the noiseless random lift before the deletion process. For each of the $r$ deleted edges, it removes a factor $\frac{n}{k}M_{i,j} - O(r) \ge \frac{n}{2k}$ from the product as long as $r = \lceil \delta\frac{dn}{2} \rceil$ is small compared to $\frac{n}{k}$.

    Consequently, for small enough $\delta > 0$, suppose $r = \delta \frac{dn}{2}$ is an integer, then the number of potential stub-pair realizations of noisy random lift is at most
    \begin{align*}
        &\binom{n}{\frac{n}{k}, \dots, \frac{n}{k}} \prod_{i=1}^k \binom{d}{M_{i,1}, M_{i,2}, \dots, M_{i,k}}^{n/k} \cdot \binom{dn}{2r}(2r-1)!!\\
        &\cdot \prod_{i=1}^k \left(\frac{n}{k}M_{i,i}-1\right)!! \prod_{1\le i < j \le k}\left(\frac{n}{k}M_{i,j}\right)! \left(\frac{2k}{n}\right)^r\\
        &\le k^n (d!)^n \prod_{i=1}^k \frac{\left(\frac{n}{k}M_{i,i}-1\right)!!}{(M_{i,i}!)^{\frac{n}{k}}} \prod_{1\le i < j \le k}\frac{\left(\frac{n}{k}M_{i,j}\right)!}{(M_{i,j}!)^{\frac{2n}{k}}} \cdot \binom{dn}{2r}(2r-1)!!\left(\frac{2k}{n}\right)^r\\
        &\le k^n (d!)^n n^{O(1)}\left(\frac{n}{ek}\right)^{\frac{dn}{2}} \left(\prod_{i=1}^k \prod_{j=1}^k\frac{M_{i,j}^{\frac{M_{i,j}}{2}}}{M_{i,j}!}\right)^{\frac{n}{k}} \left(\frac{edn}{2r}\right)^{2r} (2r)^r \left(\frac{2k}{n}\right)^r\\
        &= k^n (d!)^n n^{O(1)}\left(\frac{n}{ek}\right)^{\frac{dn}{2}} \left(\prod_{i=1}^k \prod_{j=1}^k\frac{M_{i,j}^{\frac{M_{i,j}}{2}}}{M_{i,j}!}\right)^{\frac{n}{k}} \left(\frac{2c^2d k}{\delta}\right)^{\delta \frac{dn}{2}}\\
        &\le \left(\frac{dn}{e}\right)^{\frac{dn}{2}} n^{O(1)} \left(\frac{2c^2d k}{\delta}\right)^{\delta \frac{dn}{2}} \left(k^{k\left(1 - \frac{d}{2}\right)} \frac{(d!)^k}{d^{\frac{dk}{2}}}\prod_{i=1}^k \prod_{j=1}^k\frac{M_{i,j}^{\frac{M_{i,j}}{2}}}{M_{i,j}!}\right)^{\frac{n}{k}}.
    \end{align*}

    First, we note that
    \begin{align*}
        \left(n^{O(1)} \left(\frac{2c^2d k}{\delta}\right)^{\delta \frac{dn}{2}}\right)^{\frac{k}{n}} = n^{O(\frac{k}{n})} \left(\frac{2c^2d k}{\delta}\right)^{\delta \frac{dk}{2}} \to 1, \text{ as } \delta \to 0.
    \end{align*}
    On the other hand, we may bound
    \begin{align*}
        (d!) \prod_{j=1}^k\frac{1}{M_{i,j}!} = \binom{d}{M_{i,1}, \dots, M_{i,k}} \le 2k^{d-2},\numberthis \label{ineq:bound-1}
    \end{align*}
    which comes from enumerating the first $d-2$ items $\in [k]$ in the multinomial theorem, leaving the last two items with only $2$ choices, and
    \begin{align*}
        \frac{d!}{d^{d}} \prod_{j=1}^k\frac{M_{i,j}^{M_{i,j}}}{M_{i,j}!} &= \binom{d}{M_{i,1}, \dots, M_{i,k}} \prod_{j=1}^k \left(\frac{M_{i,j}}{d}\right)^{M_{i,j}} < \frac{1}{2},\numberthis \label{ineq:bound-2}
    \end{align*}
    since we get the probability that the multinomial distribution $\textsf{Multi}(d; M_{i,1}/d, \dots, M_{i,k}/d)$ equals $(M_{i,1}, \dots, M_{i,k})$, which is at most the probability that a binomial distribution $\Bin(d, a/d)$ equals $a$ for some $1\le a \le d-1$, which is strictly less than $\frac{1}{2}$. Combining \eqref{ineq:bound-1} and \eqref{ineq:bound-2}, we get
    \begin{align*}
        &k^{k\left(1 - \frac{d}{2}\right)} \frac{(d!)^k}{d^{\frac{dk}{2}}}\prod_{i=1}^k \prod_{j=1}^k\frac{M_{i,j}^{\frac{M_{i,j}}{2}}}{M_{i,j}!}\\
        &< k^{k\left(1 - \frac{d}{2}\right)} \prod_{i=1}^k \sqrt{2k^{d-2} \frac{1}{2}}\\
        &= 1.
    \end{align*}

    As a result, then the number of potential stub-pair realizations of noisy random lift is at most
    \begin{align*}
        &\left(\frac{dn}{e}\right)^{\frac{dn}{2}} n^{O(1)} \left(\frac{2c^2d k}{\delta}\right)^{\delta \frac{dn}{2}} \left(k^{k\left(1 - \frac{d}{2}\right)} \frac{(d!)^k}{d^{\frac{dk}{2}}}\prod_{i=1}^k \prod_{j=1}^k\frac{M_{i,j}^{\frac{M_{i,j}}{2}}}{M_{i,j}!}\right)^{\frac{n}{k}}\\
        &\le \left(\frac{dn}{e}\right)^{\frac{dn}{2}} n^{O(1)} \left(n^{O(\frac{k}{n})} \left(\frac{2c^2d k}{\delta}\right)^{\delta \frac{dk}{2}} \cdot k^{k\left(1 - \frac{d}{2}\right)} \frac{(d!)^k}{d^{\frac{dk}{2}}}\prod_{i=1}^k \prod_{j=1}^k\frac{M_{i,j}^{\frac{M_{i,j}}{2}}}{M_{i,j}!} \right)^{\frac{n}{k}}\\
        &\le \left(\frac{dn}{e}\right)^{\frac{dn}{2}} n^{O(1)} \left(1 - \eps\right)^{\frac{n}{k}}
    \end{align*}
    for some $\eps = \eps(H) > 0$ when $\delta = \delta(H) > 0$ is small enough. 
    
    Dividing by $(d!)^n$, the number of possible simple graph realization of noisy random lift is then at most
    \begin{align*}
        \frac{\left(\frac{dn}{e}\right)^{\frac{dn}{2}}}{(d!)^n} n^{O(1)} \left(1 - \eps\right)^{\frac{n}{k}},
    \end{align*}
    which is exponentially smaller than the total number \eqref{eq:num-Q} of potential simple graphs under $\sQ^S$. Thus, most of the support of $\sQ^S$ is not supported on $\sP^S$, and we conclude for small enough constants $\delta = \delta(H) > 0$,
    \begin{align*}
        d_{\TV}(\sQ^S, \sP^S) = 1 - o(1), \quad \text{ as } n \to \infty.
    \end{align*}

    We omit the proof for the bipartite case, which follows a similar counting argument.
\end{proof}

\bibliographystyle{alpha}
\bibliography{main}

}
\end{document}